\documentclass[runningheads,a4paper]{elsarticle}
\usepackage{amsmath,amsfonts,amsthm}
\usepackage{amssymb,amscd}
\usepackage[enableskew]{youngtab}
\usepackage{graphicx}

\def\muntz{M\"untz }

\newtheorem{theorem}{Theorem}
\newtheorem{lemma}{Lemma}
\newtheorem{proposition}{Proposition}
\newtheorem{definition}{Definition}
\newtheorem{corollary}{Corollary}
\newtheorem{example}{Example}
\newdefinition{remark}{Remark}

\begin{document}

\title{Chebyshev Blossom in \muntz Spaces: Toward Shaping
with Young Diagrams}

\author[rvt,els]{Rachid Ait-Haddou\corref{cor1}}
\ead{rachid@bpe.es.osaka-u.ac.jp}
\author[focal]{Yusuke Sakane}
\author[rvt,els]{Taishin Nomura}

\cortext[cor1]{Corresponding author}

\address[rvt]{The Center of Advanced Medical Engineering and Informatics,
Osaka University, 560-8531 Osaka, Japan}
\address[focal]{Department of Pure and Applied Mathematics,
Graduate School of Information Science and Technology,
Osaka University, 560-0043 Osaka, Japan}
\address[els]{Department of Mechanical Science and Bioengineering
Graduate School of Engineering Science,
Osaka University, 560-8531 Osaka, Japan}

\begin{abstract}
The notion of blossom in extended Chebyshev spaces offers adequate
generalizations and extra-utilities to the tools for free-form 
design schemes. Unfortunately, such advantages are often overshadowed
by the complexity of the resulting algorithms.
In this work, we show that for the case of \muntz spaces with 
integer exponents, the notion of Chebyshev blossom leads to elegant algorithms
whose complexities are embedded in the combinatorics of Schur functions.
We express the blossom and the pseudo-affinity property in \muntz 
spaces in term of Schur functions. We derive an explicit expression of the 
Chebyshev-Bernstein basis via an inductive argument 
on nested \muntz spaces. We also reveal a simple algorithm for the 
dimension elevation process. Free-form design schemes in \muntz spaces
with Young diagrams as shape parameter will be discussed.
\end{abstract}      

\begin{keyword}
Extended Chebyshev systems \sep Chebyshev blossom \sep Computer aided design \sep 
Chebyshev-Bernstein basis \sep Schur functions \sep Young diagrams
\end{keyword}
\maketitle
\section{Introduction}
Representing a polynomial on an interval by its B\'ezier points is 
a common practice in the field of computer aided geometric design \cite{Farin}.
Namely, a polynomial $F$ of degree $n$, can be written as
$$
F(t) = \sum_{i=0}^n B_i^n (t) P_i; \qquad t \in [a,b]
$$
where $B_i^n (t) = \binom{n}{i} \alpha (t)^i \beta (t)^{n-i}, i=0, ..., n$
is the Bernstein basis in the space $\mathcal{P}_n$ of polynomials of degree $n$
with respect to the interval $[a,b]$, $\alpha(t)$ and $\beta(t)$ 
are the barycentric coordinates of the point $t$ with respect to the interval $[a,b]$, i.e.,
$\alpha(t) = (t-a) / (b-a)$ and $\beta(t) = (b-t) / (b-a)$.
Important features of this representation are that the piecewise linear
interpolant of the B\'ezier points $(P_0, P_1,..., P_n)$ reflects,
to a certain extent, the shape of the polynomial curve and that 
the end-segments $[P_0,P_1]$ and $[P_{n-1},P_n]$ are tangents to the curve
at the point $F(a)$ and $F(b)$ respectively. Furthermore, the curve lies 
in the convex hull of the control polygon and the de Casteljau algorithm
leads to an efficient method for the evaluation of the polynomial from its 
control points. The total positivity of the Bernstein basis gives rise to many 
shape preserving properties. For example, the diminishing variation property 
\cite{Ait-Haddou1} ensures that the number of times an arbitrary hyperplane 
crosses the curve is no more than the number of times that hyperplane
crosses the control polygon. The notion of blossom introduced by Ramshaw \cite{Ramshaw}
offers an elegant and unifying approach to the understanding of the many 
aspects of the theory of B\'ezier curves. The fundamental idea of blossoming 
is that for any polynomial function $F$ of degree $n$ there exists a unique function
$f(u_1,u_2,...,u_n)$ that is $n$-affine (i.e., $f$ is a polynomial of degree less than 
or equal to $1$ with respect to each separate variable), symmetric (i.e., $f(u_1,u_2,...,u_n)
= f(u_{\sigma(1)},u_{\sigma (2)},...,u_{\sigma(n)})$ for any permutation 
$\sigma$ on $\{1,2,...,n\}$) and satisfies $f(t,t,..., t)=F(t)$ for every $t \in \mathbb{R}$.
The function $f$ is called the blossom or polar form of $F$. The control points of the 
polynomial $F$ with respect to the interval $[a,b]$ are then expressed in term of the 
blossom as $P_i=f (a^{n-i},b^i)$ for $i = 0, ..., n$. The multi-affinity of the blossom 
leads in a very natural way to the de Casteljau algorithm.
Moreover, the blossom of a polynomial has a simple expression, namely if the polynomial
$F$ is expressed in the monomial basis as $F(t) = \sum_{i = 0}^n a_i t^i$ then 
its blossom is given by
$$
f (u_1,u_2, ...,u_n ) = \sum_{i=0}^n \frac{a_i}{\binom{n}{k}} e_k (u_1,u_2,...,u_n ),
$$
where $e_k(u_1,u_2,...,u_n)$ is the $k$th elementary symmetric
function in the variables $u_1,...,u_n$. The concept of blossoming was extended
by Pottmann \cite{Pottmann} to include any linear space $E = span \left(1,\phi_1,\phi_2, ...,
\phi_n \right)$ such that $span \left( \phi_1',
\phi_2', ..., \phi_n' \right)$ is an extended Chebyshev space of order $n$ on an interval.
The proposed extension bears striking similarities to the polynomial framework and in which notions of
control points, de Casteljau algorithm and generalized Bernstein basis can be defined.
Moreover, the emergence of the interval of interest as a shape parameter makes this extension 
fundamental in free-form curve design. However, while the expression of the blossom in the 
space of polynomials is simple, the expression of the blossom in a generic extended 
Chebyshev space is much more complicated in general. Thereby, leading to more complicated subdivision schemes. 
The main objective of this paper is to show that at least for the case of \muntz spaces
with integer exponents, the notion of blossoming provides us with an elegant theory in which
the resulting algorithms could be understood and made easy once we invoke the 
notion of Schur functions.
The paper is organized as follows : In the second section, 
we review the basic properties of Chebyshev blossoming \cite{Mazure1,Mazure2}.
Emphasis will be given to the notions that will be needed within 
this work, such as the definition of Chebyshev blossom, 
the pseudo-affinity property, characterizations of 
Chebyshev-Bernstein basis and the process of dimension elevation. 
In section 3, we review the fundamentals of Chen iterated integrals. 
Introducing Chen iterated integral has a twofold aims. Firstly 
it will allow us to give an interesting determinantal expression
of the Chebyshev blossom of Chebyshev functions defined
in terms of the so-called weight functions \cite{Mazure5}, thereby, 
allowing the main result of the section to be used in different contexts 
than the one of \muntz spaces. Second, the determinantal expression will provide 
us, in section 5, with the Chebyshev blossom of \muntz spaces with integer 
exponents without resorting to solving linear systems. The relevant properties of 
Schur functions will be recalled in section 4. In section 5,
we give the expression of the Chebyshev blossom of \muntz spaces with integer exponents 
in terms of Schur functions. The main result of this section is essentially the 
same as in \cite{Mazure3,Mazure4} in which a different convention was adopted and the connection 
with Schur functions seems to not to have been noticed. 
Several fundamental examples that will guide us throughout this work 
will be given. Using the Dodgson condensation formula, 
an expression of the pseudo-affinity property in terms
of Schur functions will be given in Section 6. Such expression will 
be fundamental, through section7, in deriving an explicit expression
of the Chebyshev-Bernstein basis in any \muntz space with integer exponents. 
Note that there is only a single case in which an explicit expression of 
Chebyshev-Bernstein basis is known, namely the space  
$span\{1,t^{k+1},t^{k+1},t^{k+2},...,t^{k+n} \}$ where $k$ is a positive integer \cite{Mazure4}.
Our strategy for deriving such an explicit expression consists of two steps.
First, we show that, although the de Casteljau algorithm is not able to provide us with
meaningful expressions of the Chebyshev-Bernstein basis, 
it will allow us to gain extra information on the derivatives of these bases.
Then, the explicit expression will be obtained via a dimension elevation process
and some combinatorial manipulations on nested \muntz spaces. The Chebyshev-Bernstein 
bases can be defined without resorting to the notion of Chebyshev blossom. Therefore,
this section shows, in particular, the importance of the notion of Chebyshev blossom
in solving this specific problem.  
In section 8, we give a simple algorithm for the dimension elevation process. 
The idea of using Young diagrams as shape parameter for free-from design and for 
the problem of continuity of composite Chebyshev-B\'ezier curves will be discussed.
Expression for the derivative of the Chebyshev-Bernstein basis will also be given.  
\section{Chebyshev blossom and Chebyshev-Bernstein basis}
In this section, we review the basic properties of Chebyshev blossoming and 
provide the relevant informations that will be used within this work.
We will mainly follow the terminology and the notations of the excellent report 
\cite{Mazure1}. Although, there is optimal smoothness conditions on Chebyshev 
functions in order to define the Chebyshev blossom, we will assume here, for simplicity,
that all the functions that we encounter are infinitely differentiable.   
\vskip 0.2 cm 
\noindent{{\bf {Chebyshev blossom: }}}
Let $I$ denote a non-empty real interval, and let $\phi=(\phi_1,
\phi_2, ...,\phi_n)^{T}$ be a $C^{\infty}$ function from the
interval $I$ into $\mathbb{R}^{n}$ (the space $\mathbb{R}^{n}$ is 
viewed as an $n$-dimensional affine space). Let us assume that the linear space 
$D\mathcal{E}(\phi)=span(\phi_1',\phi_2',...,\phi_n')$
is an $n-$dimensional extended Chebyshev space on $I$, i.e.,
each non-zero element of this space vanishes (counting multiplicities)
at most $n-1$ times on $I$. In this case, we say that the function $\phi$ 
is a {\it{Chebyshev function of order $n$ on $I$}}. The linear space
$\mathcal{E}(\phi) = span(1,\phi_1,\phi_2,...,\phi_n)$ is an $(n+1)$-dimensional 
extended Chebyshev space that we call the Chebyshev space associated 
with the Chebyshev function $\phi$.
If for any real number $t$ in $I$, we denote by $Osc_{i}\phi(t)$ the osculating 
flat of order $i$ of the function $\phi$ at the point $t$, i.e.,
$$
Osc_i\phi(t) = \{\phi(t)+\alpha_1 \phi'(t)+... + \alpha_i \phi^{(i)}(t) 
\quad | \quad 
\alpha_1,...,\alpha_i \in \mathbb{R} \}, 
$$
then the assumption that $\phi$ is a Chebyshev function of order $n$
imply that for all $t \in I$ and for all $i = 0, ..., n$, the osculating
flat $Osc_i\phi(t)$ is an $i$-affine dimensional space {\cite{Mazure1}.
Moreover, it can be shown that for all distinct points 
$\tau_1,..., \tau_r$ in the interval $I$ and all positive integers 
$\mu_1, ..., \mu_r$ such that $\sum_{k = 1}^r \mu_k = m \leq n$, we
have 
\begin{equation}\label{intersection}
dim \cap_{k = 1}^r Osc_{n-\mu_k}\phi(\tau_k) = n-m.
\end{equation}
In particular, if in equation (\ref{intersection}) we have $m=n$,
then the intersection consists of a single point in $\mathbb{R}^{n}$,
which we label as 
$\varphi(\tau_1^{\mu_1},\tau_2^{\mu_2},...,\tau_r^{\mu_r})$, i.e.,
$$
\varphi(\tau_1^{\mu_1},\tau_2^{\mu_2},...,\tau_r^{\mu_r}) = 
\cap_{k = 1}^r Osc_{n - \mu_k} \varphi ( \tau_k ).
$$
The previous construction provides us with a function 
$\varphi = (\varphi_1,\varphi_2, ..., \varphi_n)^{T}$ 
from $I^n$ into $\mathbb{R}^{n}$ with the following 
straightforward properties: The function $\varphi$ is symmetric 
in its arguments and its restriction to the diagonal of $I^n$ 
is equal to $\phi$ i.e., $\varphi(t,t,...,t) = \phi(t)$. 
The function $\varphi$ is called the {\it{Chebyshev blossom}}
of the function $\phi$. Note that the definition 
of the Chebyhev blossom imply in particular that if we are given 
$n$ pairwise distinct real numbers $u_i, i=1,...,n$ in the interval $I$, 
then the Chebyshev blossom value $X = \varphi(u_1,...,u_n)$
is given by the solution of the linear system 
\begin{equation} \label{linearsystem}
\det\left(X-\phi(u_{i}),\phi'(u_{i}),...,\phi^{(n-1)}(u_i)\right) = 0,
\quad  i=1,...,n.
\end{equation}      

\vskip 0.2 cm 
\noindent{{\bf {The pseudo-affinity property: }}}
Another fundamental property of Chebyshev blossom is the notion 
of pseudo-affinity. Let assume given $(n-1)$ real numbers 
$T = (u_1,u_2,...,u_{n-1}) 
= (\tau_1^{\mu_1},\tau_2^{\mu_2},...,\tau_r^{\mu_r})$ ($\mu_1 + ...+\mu_r = n-1$)
in the interval $I$. According to equation (\ref{intersection}), 
the affine space 
$$
L = \cap_{k = 1}^r Osc_{n-\mu_i}\phi(\tau_i)
$$
is an affine line. Therefore, for any $t$ in the interval $I$, the point 
$$
\varphi(u_1,...,u_{n-1},t)
$$
belongs to the line $L$. In other word, there exists a function 
$\alpha$ such that for any distinct numbers $a$ and $b$ in the interval $I$,
and for any $t \in I$, we have      
\begin{equation}\label{pseudoaffinity}
\varphi (u_1,...,u_{n-1},t)=\left(1-\alpha(t)\right)\varphi(u_1,...,u_{n-1},a) +
\alpha(t) \varphi(u_1,...,u_{n-1},b).
\end{equation}
Moreover, it is shown in \cite{Mazure1} that the function $\alpha$ is a $C^{\infty}$
strictly monotonic function from the interval $I$ to $\mathbb{R}$ satisfying
$\alpha(a) = 0$ and $\alpha(b)=0$. The function $\alpha$ will be  
called the {\it{pseudo-affinity factor}} associated with the Chebyshev 
space $\mathcal{E}(\phi)$. In general, the function $\alpha$ depends on the interval $[a,b]$,
the real numbers $u_i,i=1,...,n-1$ as well as the parameter $t$. To stress this 
dependence, we will often write the pseudo-affinity factor as 
$\alpha(u_1,...,u_{n-1};a,b,t)$. 
\vskip 0.2 cm 
\noindent{{\bf {Chebyshev-Bernstein Basis: }}}
Given two real numbers $a$ and $b$ in the interval $I$ ($a < b$), and denote
by $\Pi_k$, $k=0,...,n$, the $(n+1)$ points defined as 
$$
\Pi_i = \varphi(a^{n-i},b^{i}). 
$$
The points $\Pi_i$ are affinely independent in $\mathbb{R}^{n}$ \cite{Mazure1}.
Therefore, there exist $(n+1)$ functions $B^{n}_{k}, k=0,...,n$ such that 
for any $t \in I$
$$
\phi(t) = \sum_{k=0}^n B^{n}_{k}(t)\Pi_i \quad \textnormal{and} 
\quad \sum_{k=0}^n B^{n}_{k}(t)=1.
$$
The functions $B^{n}_{0},...,B^{n}_{k},..., B^{n}_{n}$ form a
basis of the Chebyshev space $\mathcal{E}(\phi)$, called the Chebyshev-Bernstein
basis of the space $\mathcal{E}(\phi)$ with respect to the interval $[a,b]$.
In this work, we will use the following characterization of the 
Chebyshev-Bernstein basis \cite{Mazure2}:
\begin{theorem}\label{mazuretheorem}
The Chebyshev-Bernstein basis $(B^{n}_{0},...,B^{n}_{k},...,B^{n}_{n})$ 
with respect to the interval $[a,b] \subset I$, is the unique normalized basis of the
space $\mathcal{E}(\phi)$ such that for $k=0,...,n$, $B^{n}_{k}$ vanishes $k$ 
times at $a$ and $n-k$ times at $b$.    
\end{theorem}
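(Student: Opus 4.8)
The plan is to prove the two assertions of Theorem~\ref{mazuretheorem} separately: first that the Chebyshev-Bernstein basis has the stated vanishing pattern, and then that these vanishing conditions together with normalization pin down the basis uniquely.

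\medskip

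\noindent\textbf{The vanishing property.} First I would exploit the blossom characterization $\Pi_i = \varphi(a^{n-i},b^i)$ together with the geometric meaning of the blossom as an intersection of osculating flats. Fix $k$ and consider $B^n_k$. To show $B^n_k$ vanishes $k$ times at $a$, I would evaluate the identity $\phi(t) = \sum_{i=0}^n B^n_i(t)\,\Pi_i$ and its first $k-1$ derivatives at $t=a$. Since $\phi(a),\phi'(a),\dots,\phi^{(k-1)}(a)$ span $Osc_{k-1}\phi(a)$, the left side and its derivatives at $a$ lie in (the direction space of) $Osc_{k-1}\phi(a)$. On the other hand, by definition $\Pi_i = \varphi(a^{n-i},b^i) \in Osc_{n-(n-i)}\phi(a) = Osc_i\phi(a)$ when $i \le k-1$, but more importantly each $\Pi_i$ with $i \ge k$ lies in $Osc_{n-i}\phi(a)$ with $n-i \le n-k$; the point is that $\Pi_0,\dots,\Pi_{k-1}$ lie in the osculating flat $Osc_{k-1}\phi(a)$ while $\Pi_k,\dots,\Pi_n$ are affinely independent from it modulo that flat. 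Writing everything in an affine frame adapted to the flag $Osc_0\phi(a)\subset Osc_1\phi(a)\subset\cdots$, the coordinates transverse to $Osc_{k-1}\phi(a)$ of the right-hand side must vanish at $a$ along with their first $k-1$ derivatives; since those transverse coordinates of $\Pi_k,\dots,\Pi_n$ form an invertible block and are independent of $t$, this forces $B^n_k(a)=\cdots=(B^n_k)^{(k-1)}(a)=0$, i.e.\ $B^n_k$ vanishes $k$ times at $a$. The argument at $b$ is symmetric, using $\Pi_i \in Osc_{n-i}\phi(b)$ and differentiating $n-k-1$ times at $b$; one checks the total multiplicity count $k + (n-k) = n$ is exactly consistent with $B^n_k$ being a nonzero element of the $(n+1)$-dimensional extended Chebyshev space $\mathcal{E}(\phi)$, whose nonzero elements vanish at most $n$ times.

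\medskip

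\noindent\textbf{Uniqueness.} Suppose $(C_0,\dots,C_n)$ is another normalized basis of $\mathcal{E}(\phi)$ with $C_k$ vanishing $k$ times at $a$ and $n-k$ times at $b$. For each fixed $k$, both $B^n_k$ and $C_k$ lie in the subspace of $\mathcal{E}(\phi)$ consisting of functions vanishing $\ge k$ times at $a$ and $\ge n-k$ times at $b$. Because $\mathcal{E}(\phi)$ is an extended Chebyshev space of dimension $n+1$, imposing $k$ vanishing conditions at $a$ and $n-k$ at $b$ — a total of $n$ independent linear conditions — cuts the dimension down to exactly $1$ (here I would invoke the standard fact, equivalent to (\ref{intersection}), that interpolation conditions at points of $I$ are linearly independent in an extended Chebyshev space up to the dimension bound). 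Hence $C_k = \lambda_k B^n_k$ for some scalar $\lambda_k$. Finally, summing the normalization $\sum_k C_k = 1 = \sum_k B^n_k$ gives $\sum_k (\lambda_k - 1) B^n_k = 0$, and since $(B^n_k)$ is a basis, $\lambda_k = 1$ for all $k$. Thus $C_k = B^n_k$.

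\medskip

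\noindent\textbf{Main obstacle.} The delicate point is the vanishing-at-$a$ argument: one must carefully justify that the transverse coordinates of $\Pi_k,\dots,\Pi_n$ relative to $Osc_{k-1}\phi(a)$ form an invertible matrix — equivalently that $\Pi_0,\dots,\Pi_{k-1}$ genuinely span $Osc_{k-1}\phi(a)$ as an affine flat while the remaining $\Pi_i$ escape it in linearly independent directions. This is where the affine independence of the $\Pi_i$ (quoted from \cite{Mazure1}) and the dimension formula (\ref{intersection}) do the real work, and setting up the adapted affine frame cleanly, rather than by a brute-force determinant computation, is the part that needs the most care.
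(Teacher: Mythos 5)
The paper does not actually prove Theorem~\ref{mazuretheorem}; it quotes it from \cite{Mazure2}, so there is no in-paper proof to compare yours against. Judged on its own merits, your argument is essentially correct and is the standard one. For the vanishing part: since $\Pi_i=\varphi(a^{n-i},b^{i})\in Osc_{i}\phi(a)\cap Osc_{n-i}\phi(b)$, the points $\Pi_0,\dots,\Pi_{k-1}$ span the $(k-1)$-flat $Osc_{k-1}\phi(a)$, and affine independence of the $\Pi_i$ (quoted from \cite{Mazure1}) makes the images of $\Pi_k,\dots,\Pi_n$ in the quotient modulo the direction space of that flat an affine frame; evaluating $\phi^{(j)}(a)=\sum_i (B^n_i)^{(j)}(a)\,\Pi_i$ for $j=0,\dots,k-1$ (coefficients summing to $1$ for $j=0$ and to $0$ for $j\geq1$) and using that $\phi^{(j)}(a)$ lies in (the direction space of) $Osc_{k-1}\phi(a)$ forces $(B^n_i)^{(j)}(a)=0$ for all $i\geq k$ — slightly more than you need. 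The symmetric argument at $b$, together with the bound of $n$ zeros for nonzero elements of the $(n+1)$-dimensional extended Chebyshev space $\mathcal{E}(\phi)$, upgrades ``at least'' to ``exactly''. The uniqueness argument is also sound: a two-dimensional space of solutions of the $n$ vanishing conditions would yield a nonzero element of $\mathcal{E}(\phi)$ with $n+1$ zeros, so the solution space is a line spanned by $B^n_k$, and normalization fixes the scalar. One slip to correct: you assert that $\Pi_i$ with $i\geq k$ lies in $Osc_{n-i}\phi(a)$; the flat of order $n-i$ is taken at $b$, while at $a$ one has $\Pi_i\in Osc_{i}\phi(a)$. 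This typo does not damage the proof, since what you actually use is only that $\Pi_k,\dots,\Pi_n$ escape $Osc_{k-1}\phi(a)$ in affinely independent directions.
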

\vskip 0.2 cm 
\noindent{{\bf {$\mathcal{E}(\phi)$-functions and its blossom: }}}
A function $F$ from the interval $I$ into $\mathbb{R}^{m}$, $m\leq n$
is called a $\mathcal{E}(\phi)$-function if all its components belong 
to the space $\mathcal{E}(\phi)$ i.e., there exists an affine map $h$ 
on $\mathbb{R}^n$ such that $F = h \circ \phi$. The Chebyshev blossom 
of $F$ is then defined as the affine image of the Chebyshev blossom of 
$\phi$ under the map $h$, i.e., $f = h \circ \varphi$. We define the 
{\it{Chebyshev-B\'ezier}} points with respect to an interval 
$[a,b] \subset I$ of a $\mathcal{E}(\phi)$-function $F$ by 
$$
P_i = f(a^{n-i},b^{i}), \quad i=0,...,n.
$$
As the Chebyshev blossom $f$  of the function $F$ inherits the 
pseudo-affinity property (\ref{pseudoaffinity}), 
the value $F(t)$ can be computed as an affine combination
of the points $P_i, i=0,...,n$, leading to the so-called 
{\it de Casteljau algorithm}. Note, also that the function $F$
can be written as 
$$
F(t) = \sum_{k=0}^n B^{n}_{k}(t)P_i, \quad  t \in I,
$$   
where $(B^{n}_{0},...,B^{n}_{k},...,B^{n}_{n})$ is the Chebyshev-Bernstein 
basis of the space $\mathcal{E}(\phi)$ with respect to the interval $[a,b]$. 
\vskip 0.2 cm 
\noindent{{\bf {Dimension elevation process: }}} 
Consider another Chebyshev function $\psi$ of order $n+1$ on 
the same interval $I$ and such that $\mathcal{E}(\phi) \subset \mathcal{E}(\psi)$.
Let $F$ be $\mathcal{E}(\phi)$-function and denote by $P_i, i=0,...,n$
its Chebyshev-B\'ezier points with respect to the interval $[a,b]$.
The function $F$ can also be viewed as an $\mathcal{E}(\psi)$-function 
and then having different Chebyshev-B\'ezier points $\tilde{P}_i, i=0,...,n+1$ 
with respect to the interval $[a,b]$. From the definition of the 
Chebyshev blossom we necessarily have $\tilde{P}_0 = P_{0}$ and 
$\tilde{P}_{n+1} = P_{n}$. Moreover, it can be shown \cite{Mazure1}
that there exist real numbers $\xi_i \in ]0,1[, i=1,...,n$ such that
\begin{equation}\label{generalelevation}
\tilde{P}_i = (1- \xi_{i})P_{i-1} + \xi_{i} P_{i}, \quad  
i = 1,...,n.
\end{equation}
\section{Chen Iterated Integrals and Chebyshev Blossom}
In this section, we give an expression of the Chebyshev blossom 
of Chebyshev functions defined in term of the so-called weight 
functions. The notion of Chen iterated integrals \cite{Chen1} and their 
properties reveal to be fundamental in deriving such expression. 
Due to the simplicity of the proofs of the properties of Chen iterated
integrals and for the sake of completeness, we will include such 
proofs in this section. 

\noindent{\bf{Chen iterated integral: }} Let $\omega_1, \omega_2,
...,\omega_n$ be $C^{\infty}$ functions on a non-empty 
real interval $I$. Let $a$ and $b$ two real numbers in $I$. 
The Chen iterated integral is defined iteratively as 
follows :
$$
L_{w_1}^{[a,b]} = \int_{a}^{b} {\omega_1(t)} dt, 
$$
and for $p=2,...,n$, we define 
$$
L_{\omega_1 \omega_2 ...\omega_p}^{[a,b]} = 
\int_{a}^{b} \omega_1(t) L_{\omega_2 ...\omega_p}^{[a,t]} dt.
$$
Therefore, the Chen iterated integral can be written as 
$$
L_{\omega_1 \omega_2 ...\omega_n}^{[a,b]} = 
\int_{a}^{b} \omega_1(t_1) \int_{a}^{t_1} \omega_2(t_2) \int_{a}^{t_2} ....
\int_{a}^{t_{n-1}} \omega_{n}(t_n) \hskip 0.1cm dt_n dt_{n-1} ...dt_{1}, 
$$
or if $a \leq b$ as  
\begin{equation}\label{simplex}
L_{\omega_1 \omega_2 ...\omega_n}^{[a,b]} = \int_{\Delta_n} 
\omega_1(t_1) \omega_2(t_2) ... \omega_n(t_n) dt_1 ...dt_n,
\end{equation}
where $\Delta_n$ is the $n$-simplex in $\mathbb{R}^{n}$ 
$$
\Delta_n = \{ (t_1,t_2,...,t_n) \in \mathbb{R}^{n} \quad | \quad 
b > t_1 > t_2 > ... > t_n > a \}.
$$
Chen iterated integrals have the following properties \cite{Bowman,Chen1} 

\begin{proposition}
For any real numbers $a,b$ and $c$ in the interval $I$, we have
\begin{equation}\label{reversepath}
L_{\omega_1 \omega_2 ...\omega_n}^{[a,b]} = 
(-1)^n L_{\omega_n \omega_2 ...\omega_1}^{[b,a]}
\end{equation}
and  
\begin{equation}\label{convolution}
L_{\omega_1 \omega_2 ...\omega_n}^{[a,b]} =  
\sum_{i=0}^{n} 
L_{\omega_1 \omega_2 ...\omega_i}^{[c,b]} 
L_{\omega_{i+1} \omega_{i+2} ...\omega_n}^{[a,c]}
\end{equation}
with the convention that 
$L^{[x,y]}_{\omega_1 \omega_2 ...\omega_r} = 1$ if $r=0$ or $r > n$.
\end{proposition}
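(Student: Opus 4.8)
The plan is to prove both identities by elementary means, using only the iterative definition of the Chen iterated integral, the additivity $\int_a^b=\int_a^c+\int_c^b$ of the ordinary integral (which holds for all $a,b,c\in I$, irrespective of ordering), and Fubini's theorem for the smooth integrands involved.

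For (\ref{reversepath}) I would first treat the case $a\le b$ by completely unwinding the recursion defining $L^{[b,a]}_{\omega_n\omega_{n-1}\cdots\omega_1}$ into its nested form $\int_b^a\omega_n(t_1)\int_b^{t_1}\omega_{n-1}(t_2)\cdots\int_b^{t_{n-1}}\omega_1(t_n)\,dt_n\cdots dt_1$. Each of the $n$ integrations runs from $b$ toward a smaller endpoint, so reversing all of them produces the factor $(-1)^n$ together with an ordinary integral over the chain $a<t_1<t_2<\cdots<t_n<b$; relabelling the dummy variables in reverse order, $s_k:=t_{n+1-k}$, carries this domain to the simplex $\Delta_n$ of (\ref{simplex}) and the integrand to $\omega_1(s_1)\omega_2(s_2)\cdots\omega_n(s_n)$, which gives exactly $(-1)^nL^{[a,b]}_{\omega_1\omega_2\cdots\omega_n}$. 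The general case then follows at once: for $b\le a$ apply the identity just proved with $a$ and $b$ interchanged and the word reversed, and for $a=b$ both sides vanish when $n\ge1$. An alternative that avoids any case split on the order of $a$ and $b$ is to induct on $n$ after first establishing the complementary "peel from the right" recursion $L^{[a,b]}_{\omega_1\cdots\omega_n}=\int_a^b\omega_n(t)\,L^{[t,b]}_{\omega_1\cdots\omega_{n-1}}\,dt$ (a one-line Fubini induction): then $L^{[b,a]}_{\omega_n\cdots\omega_1}=\int_b^a\omega_n(t)\,L^{[b,t]}_{\omega_{n-1}\cdots\omega_1}\,dt$, the induction hypothesis rewrites the inner integral as $(-1)^{n-1}L^{[t,b]}_{\omega_1\cdots\omega_{n-1}}$, reversing the outer limits supplies one more sign, and the peel-from-the-right formula identifies the outcome as $(-1)^nL^{[a,b]}_{\omega_1\cdots\omega_n}$.

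For (\ref{convolution}) I would induct on $n$, the case $n=1$ being just $\int_a^b\omega_1=\int_c^b\omega_1+\int_a^c\omega_1$. For the step, start from $L^{[a,b]}_{\omega_1\omega_2\cdots\omega_n}=\int_a^b\omega_1(t)\,L^{[a,t]}_{\omega_2\cdots\omega_n}\,dt$ and split the outer integral as $\int_a^c+\int_c^b$. The $\int_a^c$ piece is precisely $L^{[a,c]}_{\omega_1\omega_2\cdots\omega_n}$, which is the $i=0$ term of the claimed sum once the convention $L^{[c,b]}_{\emptyset}=1$ is used. In the $\int_c^b$ piece the variable $t$ lies in $[c,b]$, so the induction hypothesis applies to the word of length $n-1$ and yields $L^{[a,t]}_{\omega_2\cdots\omega_n}=\sum_{j=0}^{n-1}L^{[c,t]}_{\omega_2\cdots\omega_{j+1}}\,L^{[a,c]}_{\omega_{j+2}\cdots\omega_n}$; substituting, pulling the $t$-independent factors $L^{[a,c]}_{\omega_{j+2}\cdots\omega_n}$ outside the $t$-integral, and recognising $\int_c^b\omega_1(t)\,L^{[c,t]}_{\omega_2\cdots\omega_{j+1}}\,dt=L^{[c,b]}_{\omega_1\omega_2\cdots\omega_{j+1}}$ from the definition turns this piece into $\sum_{i=1}^{n}L^{[c,b]}_{\omega_1\cdots\omega_i}\,L^{[a,c]}_{\omega_{i+1}\cdots\omega_n}$ after setting $i=j+1$. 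Adding the two pieces gives (\ref{convolution}). For $a\le c\le b$ one may instead argue geometrically from (\ref{simplex}): partition $\Delta_n$ according to the unique index $i$ with $t_1>\cdots>t_i>c>t_{i+1}>\cdots>t_n$, note that the two blocks of inequalities decouple, and read off the product $L^{[c,b]}_{\omega_1\cdots\omega_i}L^{[a,c]}_{\omega_{i+1}\cdots\omega_n}$.

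I do not anticipate a genuine obstacle; the work is almost entirely bookkeeping. One must apply the empty-word conventions $L^{[x,y]}_{\omega_1\cdots\omega_r}=1$ for $r=0$ and $r>n$ consistently at the extreme indices $i\in\{0,n\}$, keep track of which factor in each summand carries the reversed word $\omega_n\omega_{n-1}\cdots\omega_1$ rather than $\omega_1\cdots\omega_n$, and correctly carry the sign $(-1)^n$. The only mildly delicate point is that the simplex representation (\ref{simplex}) is valid only for $a\le b$, so the geometric arguments need the reduction to the ordered case described above, whereas the purely inductive arguments are order-agnostic and dispense with it; the analytic steps — interchanging orders of integration and splitting ordinary integrals — are routine for these $C^\infty$ integrands.
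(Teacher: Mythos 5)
Your proof is correct, and for (\ref{reversepath}) it is essentially the paper's argument: the authors likewise pass to the Fubini-rearranged nested form and then ``switch the limits of integration at each level,'' which is exactly your unwinding-plus-sign-count; your second, order-agnostic induction via the peel-from-the-right recursion is a small bonus that removes the case split on whether $a\leq b$. For (\ref{convolution}) both you and the paper induct on the length of the word, but the induction step is executed differently. The paper replaces $b$ by a variable $t$, differentiates both sides with respect to $t$, invokes the induction hypothesis to conclude that the two sides agree up to an additive constant $K$, and then sets $t=c$ to force $K=0$. You instead split the outermost integral as $\int_a^b=\int_a^c+\int_c^b$, identify the $\int_a^c$ piece with the $i=0$ term, and substitute the induction hypothesis for $L^{[a,t]}_{\omega_2\cdots\omega_n}$ inside the $\int_c^b$ piece before interchanging the finite sum with the integral. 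The two routes are equally valid; yours is slightly more self-contained in that it never leaves the realm of direct integral manipulation (no differentiation under the recursion, no undetermined constant of integration), while the paper's is marginally shorter to write because the induction hypothesis is applied only once, to the derivative. Your bookkeeping of the empty-word conventions at $i=0$ and $i=n$ and of the index shift $i=j+1$ is accurate.
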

\begin{proof}
To prove (\ref{reversepath}), 
we can proceed as follows. We first remark that 
\begin{equation*}
\begin{split}
L_{\omega_1 \omega_2 ...\omega_n}^{[a,b]} & =  
\int_{a}^{b} \omega_1(t_1) \int_{a}^{t_1} \omega_2(t_2) \int_{a}^{t_2} ....
\int_{a}^{t_{n-1}} \omega_{n}(t_n) \hskip 0.1cm dt_n dt_{n-1} ...dt_{1} \\
& = \int_{a}^{b} \omega_n(t_n) \int_{t_n}^{b} \omega_{n-1}(t_{n-1}) \int_{t_{n-2}}^{b} ....
\int_{t_2}^{b} \omega_{1}(t_1) \hskip 0.1cm dt_1 dt_{2} ...dt_{n}. 
\end{split}
\end{equation*}
Then, we switch the limit of integration at each level starting from $\omega_1$.
Equation (\ref{convolution}) can be proven by induction on the number of weight functions $\omega_i$.
The equality is obvious for $n=1$. Let us assume the equality to be true for the $(n-1)$ weight functions 
$\omega_2,...,\omega_{n}$. Replacing $b$ in (\ref{convolution}) by a variable $t$ and 
differentiating both sides of the equation with respect to $t$ shows, by the induction hypothesis,
that there exists a constant $K$ such that  
\begin{equation}\label{inter} 
L_{\omega_1 \omega_2 ...\omega_n}^{[a,t]} =  
\sum_{i=0}^{n} 
L_{\omega_1 \omega_2 ...\omega_i}^{[c,t]} 
L_{\omega_{i+1} \omega_{i+2} ...\omega_n}^{[a,c]} + K.
\end{equation}  
Taking $t=c$ in the last equation, shows that the constant $K =0$.
\end{proof}
If we take $a=b$ in (\ref{convolution}), and taking into account (\ref{reversepath}),
we arrive, after renaming the variables, to the following 
\begin{corollary}\label{sumChen}
For any $a$ and $b$ in the interval $I$, we have 
\begin{equation}\label{corollarychen}
\sum_{i=1}^{n} (-1)^{i-1} L_{\omega_1...\omega_i}^{[a,b]} 
L_{\omega_n ... \omega_{i+1}}^{[a,b]} =L_{\omega_n \omega_{n-1} ...\omega_1}^{[a,b]}.
\end{equation}
\end{corollary}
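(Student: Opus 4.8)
The plan is to specialize the convolution formula (\ref{convolution}) by setting $a=b$ and then use the reversal identity (\ref{reversepath}) to rewrite one of the two Chen integrals appearing in each summand. First I would take $a=b$ in (\ref{convolution}). On the left-hand side, $L_{\omega_1\omega_2\dots\omega_n}^{[a,a]}$ is an integral over a degenerate simplex, hence equals $0$ (here I use $n\ge 1$; the term $i=0$ and $i=n$ on the right still carry the convention value $1$). So we obtain
\begin{equation*}
0 = \sum_{i=0}^{n} L_{\omega_1\omega_2\dots\omega_i}^{[c,a]}\, L_{\omega_{i+1}\omega_{i+2}\dots\omega_n}^{[a,c]}.
\end{equation*}

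Next I would isolate the two extreme terms. For $i=0$ the summand is $L_{\omega_1\dots\omega_n}^{[a,c]}$ and for $i=n$ it is $L_{\omega_1\dots\omega_n}^{[c,a]}$, while for $1\le i\le n-1$ both factors are genuine (nonempty) Chen integrals. I then apply (\ref{reversepath}) to the factor $L_{\omega_1\dots\omega_i}^{[c,a]}$, turning it into $(-1)^i L_{\omega_i\dots\omega_1}^{[a,c]}$, and likewise rewrite $L_{\omega_1\dots\omega_n}^{[c,a]}$ as $(-1)^n L_{\omega_n\dots\omega_1}^{[a,c]}$. After renaming $c$ back to $b$ (so all integrals are over $[a,b]$) and collecting signs, the vanishing sum rearranges into
\begin{equation*}
L_{\omega_n\omega_{n-1}\dots\omega_1}^{[a,b]} = \sum_{i=1}^{n} (-1)^{i-1} L_{\omega_1\dots\omega_i}^{[a,b]}\, L_{\omega_n\dots\omega_{i+1}}^{[a,b]},
\end{equation*}
which is exactly (\ref{corollarychen}). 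The shift from the range $i=0,\dots,n$ to $i=1,\dots,n$ is bookkeeping: the old $i=n$ term becomes, after reversal, the isolated $L_{\omega_n\dots\omega_1}^{[a,b]}$ moved to the other side, and the old $i=0$ term is absorbed as the $i=n$ term of the new sum (again via the convention that an empty second factor equals $1$).

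The only genuinely delicate point is the index and sign bookkeeping — making sure the reversal (\ref{reversepath}) is applied to the correct factor, that the power $(-1)^i$ from reversing a length-$i$ integral combines with the term's position to produce the stated $(-1)^{i-1}$, and that the boundary conventions $L^{[x,y]}_{\cdot}=1$ for an empty word are used consistently at both ends. None of this is hard, but it is the step where an off-by-one error is easiest to make; I would verify it explicitly for $n=1$ and $n=2$ before trusting the general rearrangement.
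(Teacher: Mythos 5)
Your proposal is correct and is essentially the paper's own argument: the paper derives the corollary in one line by setting $a=b$ in (\ref{convolution}), invoking (\ref{reversepath}), and ``renaming the variables.'' One small bookkeeping note: reversing the first factor $L^{[c,a]}_{\omega_1\dots\omega_i}$ as you describe lands on the mirror identity $\sum_{i=1}^n(-1)^{i-1}L_{\omega_i\dots\omega_1}L_{\omega_{i+1}\dots\omega_n}=L_{\omega_1\dots\omega_n}$, which matches (\ref{corollarychen}) only after the additional relabeling $\omega_k\mapsto\omega_{n+1-k}$ (part of the paper's ``renaming''); reversing the \emph{second} factor $L^{[a,c]}_{\omega_{i+1}\dots\omega_n}$ instead gives the stated form directly.
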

\begin{remark}
Probably the most imporant property of Chen iterated integrals is the so-called shuffle 
product of two Chen iterated integrals \cite{Bowman,Chen1}. In our present context of 
Chebyshev blossom in \muntz spaces, such property is not needed. However, in a future contribution 
we will exhibit it importance for Chebyshev blossom of Chebyshev functions defined in terms of 
weight functions. 
\end{remark}
\vskip 0.2 cm
\noindent{\bf{A determinant formulas: }} Let $\omega_1, \omega_2,...,\omega_n$ 
be $C^{\infty}$ functions on a real interval $[a,b]$. Denote by 
$I^{[a,b]}(\omega_1,\omega_2,...,\omega_n) = (a_{ij})_{1\leq i,j\leq n}$ 
the square matrix of order $n$ given by 
$$
a_{ii} = L_{\omega_i}^{[a,b]}, \quad 
a_{i,i+1} = 1, \quad i=1,...,n,
$$
and
$$
a_{i,j} = 0 \quad \textnormal{if} \quad j > i+1, \quad 
a_{i,j} = L_{\omega_j \omega_{j+1} ...\omega_i}^{[a,b]} 
\quad \textnormal{if} \quad j < i.
$$
The matrix $I^{[a,b]}(\omega_1,\omega_2,...,\omega_n)$ has the form 
\begin{equation}\label{chenmatrix}
\begin{pmatrix} 
L_{\omega_1}^{[a,b]}&1 & 0 & 0 & \dots&0 \\
L_{\omega_1 \omega_2}^{[a,b]}& L_{\omega_2}^{[a,b]}&1&0& 0\dots& 0\\
L_{\omega_1 \omega_2 \omega_3}^{[a,b]}& L_{\omega_2 \omega_3}^{[a,b]}&
L_{\omega_3}^{[a,b]}&1& 0\dots&0\\
\hdotsfor[2]{5}   \\
L_{\omega_1 \omega_2 ...\omega_n}^{[a,b]}&
L_{\omega_2 ...\omega_n}^{[a,b]} & L_{\omega_3 ...\omega_n}^{[a,b]}&\dots&
L_{\omega_{n-1}...\omega_n}^{[a,b]}&L_{\omega_n}^{[a,b]}\end{pmatrix}
\end{equation}
\begin{proposition}\label{determinantformulas}
The determinant of the matrix $I^{[a,b]}(\omega_1,\omega_2,...,\omega_n)$
is given by  
\begin{equation}\label{determinant}
\det(I^{[a,b]}(\omega_1,\omega_2,...,\omega_n)) = 
L_{\omega_n \omega_{n-1}...\omega_1}^{[a,b]}.
\end{equation}
\end{proposition}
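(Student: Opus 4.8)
The plan is to prove the identity by induction on $n$, expanding $\det\bigl(I^{[a,b]}(\omega_1,\ldots,\omega_n)\bigr)$ along its first column and recognizing the resulting sum as the left-hand side of Corollary~\ref{sumChen}. Throughout, write $D(\omega_1,\ldots,\omega_n)$ for $\det\bigl(I^{[a,b]}(\omega_1,\ldots,\omega_n)\bigr)$, with the convention $D(\,)=1$ for the empty determinant. The base case $n=1$ is trivial, since $I^{[a,b]}(\omega_1)$ is the $1\times 1$ matrix $\bigl(L_{\omega_1}^{[a,b]}\bigr)$.

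For the inductive step I would first read off that the entries of the first column of $I^{[a,b]}(\omega_1,\ldots,\omega_n)$ are $a_{i1}=L_{\omega_1\omega_2\cdots\omega_i}^{[a,b]}$, $i=1,\ldots,n$. The crucial structural remark is that the matrix is lower Hessenberg --- every entry strictly above the superdiagonal vanishes --- so that, upon deleting row $i$ and column $1$, the remaining $(n-1)\times(n-1)$ matrix is block lower-triangular with respect to the partition of its rows into $\{1,\ldots,i-1\}\cup\{i+1,\ldots,n\}$ and of its columns into $\{2,\ldots,i\}\cup\{i+1,\ldots,n\}$: the $(\{1,\ldots,i-1\},\{i+1,\ldots,n\})$ block is identically zero. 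The top-left diagonal block (rows $1,\ldots,i-1$, columns $2,\ldots,i$) is lower-triangular with $1$'s on its diagonal, hence has determinant $1$, while the bottom-right diagonal block (rows and columns $i+1,\ldots,n$) is exactly $I^{[a,b]}(\omega_{i+1},\ldots,\omega_n)$. Laplace expansion along the first column therefore yields
$$
D(\omega_1,\ldots,\omega_n)=\sum_{i=1}^{n}(-1)^{i-1}\,L_{\omega_1\omega_2\cdots\omega_i}^{[a,b]}\,D(\omega_{i+1},\ldots,\omega_n).
$$

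Each $D(\omega_{i+1},\ldots,\omega_n)$ is the determinant of a matrix of the same type but of size $n-i<n$, so by the induction hypothesis it equals $L_{\omega_n\omega_{n-1}\cdots\omega_{i+1}}^{[a,b]}$ (and for $i=n$ it is the empty determinant $1$, which matches the empty-word convention used in Corollary~\ref{sumChen}). Substituting gives
$$
D(\omega_1,\ldots,\omega_n)=\sum_{i=1}^{n}(-1)^{i-1}\,L_{\omega_1\omega_2\cdots\omega_i}^{[a,b]}\,L_{\omega_n\omega_{n-1}\cdots\omega_{i+1}}^{[a,b]},
$$
which is precisely the left-hand side of equation~(\ref{corollarychen}). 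By Corollary~\ref{sumChen} this sum equals $L_{\omega_n\omega_{n-1}\cdots\omega_1}^{[a,b]}$, completing the induction.

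I do not expect a serious obstacle here; the only point that needs care is the second step --- verifying that striking out row $i$ and column $1$ genuinely produces the claimed zero block and that the surviving bottom-right block is the shifted matrix $I^{[a,b]}(\omega_{i+1},\ldots,\omega_n)$, which is a matter of tracking indices through the Hessenberg pattern. (One could instead expand the determinant directly over permutations, noting that only those $\sigma$ with $\sigma(i)\le i+1$ for all $i$ survive and that grouping them by the set of indices where $\sigma(i)=i+1$ reassembles the definition of $L_{\omega_n\cdots\omega_1}^{[a,b]}$; but the inductive argument is shorter and makes the appeal to Corollary~\ref{sumChen} the heart of the matter.)
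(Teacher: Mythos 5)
Your proof is correct and follows essentially the same route as the paper's: induction on $n$, Laplace expansion of the Hessenberg matrix down the first column, and an appeal to Corollary~\ref{sumChen} to collapse the resulting alternating sum. The only differences are cosmetic — you start the induction at $n=1$ rather than $n=2$ and spell out the block structure of the minors, which the paper leaves implicit.
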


\begin{proof}
We proceed by induction on $n$. For $n=2$, we have 
$$
\det(I^{[a,b]}(\omega_1,\omega_2)) = L_{\omega_1}^{[a,b]} L_{\omega_2}^{[a,b]}
- L_{\omega_1 \omega_2}^{[a,b]}.   
$$
Equation (\ref{corollarychen}) for $n=2$, gives 
$$
L_{\omega_2 \omega_1}^{[a,b]} = L_{\omega_1}^{[a,b]} L_{\omega_2}^{[a,b]} -
L_{\omega_1 \omega_2}^{[a,b]},  
$$
thereby, showing (\ref{determinant}) for $n=2$. Let us assume (\ref{determinant}) 
to be true for all $m < n$. Now, by expanding the determinant
$I^{[a,b]}(\omega_1,\omega_2,...,\omega_n)$ down the first column, we shall obtain
$$
\det(I^{[a,b]}(\omega_1,\omega_2,...,\omega_n)) = 
\sum_{i=1}^{n} (-1)^{i-1}L_{\omega_1 \omega_2 ...\omega_i}^{[a,b]} 
\det(I^{[a,b]}(\omega_{i+1},\omega_{i+2},...,\omega_{n}).
$$
By the inductive hypothesis, we then have 
$$
\det(I^{[a,b]}(\omega_1,\omega_2,...,\omega_n)) = 
\sum_{i=1}^{n} (-1)^{i-1}L_{\omega_1 \omega_2 ...\omega_i}^{[a,b]} 
L_{\omega_n \omega_{n-1} ...\omega_{i+1}}^{[a,b]}.
$$ 
Applying again Corollary \ref{sumChen} leads to the desired result.
\end{proof}
\vskip 0.2 cm
\noindent{\bf{Chen iterated integral and Chebyshev blossom: }}
Let $\omega_1,...,\omega_n$ be $C^{\infty}$ functions non-vanishing 
on a real interval $I$ and defined on an interval $J \supset I$.
Let $a$ be a fixed real number in the interval $J$, then it is well known 
\cite{Mazure5} that the function
\begin{equation}\label{chebyshevfunction}
\phi(t) = (L_{\omega_1}^{[a,t]},L_{\omega_1 \omega_2}^{[a,t]},...,
L_{\omega_1 \omega_2 ...\omega_n}^{[a,t]})^{T}
\end{equation}
is a Chebyshev function of order $n$ on the interval $I$. 
In the following, we show that the Wronskian of $\phi$ has an 
interesting expression in terms of Chen interated integrals,
more precisely, we have
\begin{proposition}\label{determinantgeneral}
For any real number $t$ in the interval $I$, the Chebyshev
function $\phi$ in (\ref{chebyshevfunction}) satisfies 
$$
\det(\phi(t),\phi'(t),....,\phi^{n-1}(t)) = 
\omega_{1}^{n-1}(t) \omega_{2}^{n-2}(t)...\omega_{n-1}(t) 
L_{\omega_n \omega_{n-1} ...\omega_1}^{[a,t]}.
$$
\end{proposition}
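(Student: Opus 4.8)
The plan is to transform the Wronskian matrix $\bigl(\phi(t),\phi'(t),\dots,\phi^{(n-1)}(t)\bigr)$ into a scalar multiple of the banded matrix $I^{[a,t]}(\omega_1,\dots,\omega_n)$ of (\ref{chenmatrix}) by elementary column operations, and then to read off the value of its determinant from Proposition~\ref{determinantformulas}. Write $C_1,\dots,C_n$ for the columns of $I^{[a,t]}(\omega_1,\dots,\omega_n)$. The starting observation is that these columns are ``nested'': one has $C_1=\phi(t)$, and for $j\ge 2$ the column $C_j$ is the vector in $\mathbb{R}^n$ whose entries in positions $1,\dots,j-2$ vanish, whose entry in position $j-1$ equals $1$, and whose entries in positions $j,j+1,\dots,n$ are $L_{\omega_j}^{[a,t]},L_{\omega_j\omega_{j+1}}^{[a,t]},\dots,L_{\omega_j\cdots\omega_n}^{[a,t]}$.

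The crucial step is a differentiation rule for these columns. Using only the defining recursion $\frac{d}{dt}L_{\omega_j\omega_{j+1}\cdots\omega_i}^{[a,t]}=\omega_j(t)\,L_{\omega_{j+1}\cdots\omega_i}^{[a,t]}$ together with $\frac{d}{dt}L_{\omega_j}^{[a,t]}=\omega_j(t)$, a direct entry-by-entry comparison shows that differentiating $C_j$ simply shifts it,
$$C_j'(t)=\omega_j(t)\,C_{j+1}(t),\qquad j=1,\dots,n-1.$$
Iterating this and using the Leibniz rule, I would prove by induction on $k$ that
$$\phi^{(k)}(t)=C_1^{(k)}(t)=\bigl(\omega_1\omega_2\cdots\omega_k\bigr)(t)\,C_{k+1}(t)+R_k(t),\qquad k=0,\dots,n-1,$$
where $R_k(t)$ is a $t$-dependent linear combination of $C_2(t),\dots,C_k(t)$ (an empty combination for $k=0,1$, so $R_0=R_1=0$). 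Indeed, differentiating the displayed expression for $C_1^{(k)}$ produces the new leading term $(\omega_1\cdots\omega_k)'C_{k+1}+\omega_1\cdots\omega_k\,\omega_{k+1}C_{k+2}$ and, via $C_j'=\omega_jC_{j+1}$, keeps the remainder inside $\mathrm{span}\{C_2,\dots,C_{k+1}\}$; note that the indices occurring never exceed $n$, so the (unneeded) endpoint case $j=n$ of the shift rule does not intervene.

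With this structure in hand the determinant is immediate. The matrix $\bigl(\phi,\phi',\dots,\phi^{(n-1)}\bigr)$ has columns $C_1,C_1',\dots,C_1^{(n-1)}$; proceeding from left to right and, once column $k$ has been reduced to $\omega_1\cdots\omega_{k-1}C_k$, subtracting from column $k+1$ the appropriate ($C^\infty$) combination of columns $2,\dots,k$ to cancel $R_k$ — legitimate because each $\omega_i$ is non-vanishing on $I$ — I bring the matrix to $\bigl(C_1,\;\omega_1C_2,\;\omega_1\omega_2C_3,\;\dots,\;\omega_1\cdots\omega_{n-1}C_n\bigr)$ without altering its determinant. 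Pulling the scalars out of the columns gives
$$\det\bigl(\phi,\phi',\dots,\phi^{(n-1)}\bigr)=\Bigl(\textstyle\prod_{k=1}^{n-1}\omega_1\cdots\omega_k\Bigr)\det(C_1,\dots,C_n)=\omega_1^{n-1}\omega_2^{n-2}\cdots\omega_{n-1}\,\det I^{[a,t]}(\omega_1,\dots,\omega_n),$$
and Proposition~\ref{determinantformulas} supplies $\det I^{[a,t]}(\omega_1,\dots,\omega_n)=L_{\omega_n\omega_{n-1}\cdots\omega_1}^{[a,t]}$, which is the assertion.

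The only genuinely delicate part is the bookkeeping behind the shift rule $C_j'=\omega_jC_{j+1}$ and its iterate — checking that differentiation advances both the position of the entry $1$ and the starting index of the weights by exactly one, so that the Leibniz expansion closes up as claimed — but this is a finite, mechanical verification; everything else is routine column algebra layered on top of the already established Proposition~\ref{determinantformulas}.
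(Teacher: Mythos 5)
Your proof is correct and follows essentially the same route as the paper: both arguments show that $\phi^{(k)}$ equals $\omega_1\cdots\omega_k$ times the $(k{+}1)$st column of the Chen matrix $I^{[a,t]}(\omega_1,\dots,\omega_n)$ plus a combination of earlier columns (your $C_{k+1}$ is the paper's $\Psi_k$), reduce by column operations, and invoke Proposition~\ref{determinantformulas}. The explicit shift rule $C_j'=\omega_j C_{j+1}$ is a slightly cleaner way to organize the induction, but it is the same proof.
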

\begin{proof}
We first notice that 
\begin{equation*}
\phi'(t) = \omega_1(t) \left(1,L_{\omega_2}^{[a,t]},L_{\omega_2 \omega_3}^{[a,t]},...,
L_{\omega_2 \omega_3 ...\omega_n}^{[a,t]} \right)^{T}.
\end{equation*} 
Moreover, by a simple inductive argument, it can be shown that for
$2 \leq k \leq n-1$, there exist differentiable functions 
$\rho_{i,k}$ such that 
$$
\phi^{(k)}(t) = \sum_{i=1}^{k-1} \rho_{i,k}(t) \phi^{(i)}(t) +
\omega_1(t) \omega_2(t)...\omega_{k}(t) \Psi_{k}(t),
$$
where $\Psi_{k}(t)$ is given by 
$$
\Psi_{k}(t) = (\overbrace{0,0,...,0}^{k-1},1,
L^{[a,t]}_{\omega_{k+1}},L^{[a,t]}_{\omega_{k+1} \omega_{k+2}},...
L^{[a,t]}_{\omega_{k+1} \omega_{k+2}...\omega_{n}})^{T}. 
$$
By noticing that for $k=2,...n-1$, $\Psi_{k}(t)$ is the $(k+1)$th column vector  
of the matrix  
$I^{[a,t]}(\omega_1,\omega_2,...,\omega_n)$ defined in (\ref{chenmatrix}), 
while the first and the second column of 
$I^{[a,t]}(\omega_1,\omega_2,...,\omega_n)$ are $\phi(t)$ and $\phi'(t)/\omega_1(t)$ 
respectively, we conclude that     
\begin{equation*}
\det(\phi(t),\phi'(t),....,\phi^{n-1}(t)) = 
\omega_{1}^{n-1}(t) \omega_{2}^{n-2}(t)...\omega_{n-1}(t) 
\det(I^{[a,t]}(\omega_1,\omega_2,...,\omega_n)).
\end{equation*}
The proof then results from Proposition \ref{determinantformulas}.
\end{proof}

\noindent Let us define the following set of functions $\Phi_i,$ $i=1,...,n$ by 
$$
\Phi_1(t) = \left(L_{\omega_1 \omega_2}^{[a,t]},L_{\omega_1 \omega_2 \omega_3}^{[a,t]},...,
L_{\omega_1 \omega_2 ...\omega_n}^{[a,t]}\right)^{T},
$$
and for $i=2,...,n-1$ 
$$
\Phi_i(t) = \left(L_{\omega_1}^{[a,t]},...,
L_{\omega_1 \omega_2... \omega_{i-1}}^{[a,t]},
L_{\omega_1 \omega_2... \omega_{i+1}}^{[a,t]},
L_{\omega_1 \omega_2 ...\omega_n}^{[a,t]}\right)^{T},
$$
and 
$$
\Phi_n(t) = \left(L_{\omega_1}^{[a,t]},L_{\omega_1 \omega_2}^{[a,t]},...,
L_{\omega_1 \omega_2 ...\omega_{n-1}}^{[a,t]}\right)^{T}.
$$
We have 
\begin{proposition}\label{determinantgeneral2}
For $i=1,...,n-1$, we have 
$$
\det\left(\Phi_i'(t),...,\Phi_i^{(n-1)}(t)\right) =
\omega_1^{n-1}(t)...\omega_{n-1}(t) 
L^{[a,t]}_{\omega_{n}...\omega_{i+1}}, 
$$
and
$$ 
\det\left(\Phi_n'(t),...,\Phi_n^{(n-1)}(t)\right) = 
\omega_1^{n-1}(t)...\omega_{n-1}(t).
$$
\end{proposition}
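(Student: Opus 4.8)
The plan is to follow the scheme of the proof of Proposition~\ref{determinantgeneral}, using the observation that, for each $i$, the vector $\Phi_i$ is obtained from the Chebyshev function $\phi$ of (\ref{chebyshevfunction}) simply by deleting its $i$-th coordinate; consequently the recursive column relations already established for the derivatives $\phi^{(k)}$ descend coordinate-wise to relations among the $\Phi_i^{(k)}$.

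Concretely, I would recall from the proof of Proposition~\ref{determinantgeneral} that $\phi'(t)/\omega_1(t)$ is the second column of $I^{[a,t]}(\omega_1,\dots,\omega_n)$ and that, for $2\le k\le n-1$,
\begin{equation*}
\phi^{(k)}(t)=\sum_{j=1}^{k-1}\rho_{j,k}(t)\,\phi^{(j)}(t)+\omega_1(t)\cdots\omega_k(t)\,\Psi_k(t),
\end{equation*}
where $\Psi_k(t)$ is the $(k+1)$-th column of $I^{[a,t]}(\omega_1,\dots,\omega_n)$. Deleting the $i$-th coordinate everywhere, performing the induced column operations on the $(n-1)\times(n-1)$ matrix $\bigl(\Phi_i'(t),\dots,\Phi_i^{(n-1)}(t)\bigr)$ (which leaves its determinant unchanged), and then pulling $\omega_1(t)$ out of the first column, $\omega_1(t)\omega_2(t)$ out of the second, and so on, one obtains
\begin{equation*}
\det\bigl(\Phi_i'(t),\dots,\Phi_i^{(n-1)}(t)\bigr)=\omega_1^{n-1}(t)\cdots\omega_{n-1}(t)\,\det N_i(t),
\end{equation*}
where $N_i(t)$ denotes the $(n-1)\times(n-1)$ submatrix of $I^{[a,t]}(\omega_1,\dots,\omega_n)$ kept on columns $2,\dots,n$ after deletion of row $i$.

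It then remains to evaluate $\det N_i(t)$ and recognize $L^{[a,t]}_{\omega_n\omega_{n-1}\cdots\omega_{i+1}}$ (read as $1$ when $i=n$). For $i=1$, the matrix $N_1(t)$ is exactly $I^{[a,t]}(\omega_2,\dots,\omega_n)$, as one checks directly from the entries $a_{j,k}$, so Proposition~\ref{determinantformulas} gives $\det N_1(t)=L^{[a,t]}_{\omega_n\cdots\omega_2}$. For $2\le i\le n$, the first row of $N_i(t)$ is $(1,0,\dots,0)$, since $a_{1,2}=1$ and $a_{1,k}=0$ for $k\ge 3$; expanding along it reduces $\det N_i(t)$ to the determinant of the submatrix $M_i(t)$ of $I^{[a,t]}(\omega_1,\dots,\omega_n)$ on rows $\{2,\dots,n\}\setminus\{i\}$ and columns $\{3,\dots,n\}$. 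Because $a_{j,k}=0$ whenever $k>j+1$, the rows $j=2,\dots,i-1$ of $M_i(t)$ have their last nonzero entry, equal to $1$, in column $j+1$; hence $M_i(t)$ is block lower-triangular, with a lower-triangular block carrying $1$'s on the diagonal in the top-left corner and the block $I^{[a,t]}(\omega_{i+1},\dots,\omega_n)$ in the bottom-right corner. A second appeal to Proposition~\ref{determinantformulas} then yields $\det M_i(t)=L^{[a,t]}_{\omega_n\cdots\omega_{i+1}}$, and combining the two cases gives the stated formulas, the case $i=n$ being the one in which the bottom-right block is empty.

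The argument is essentially combinatorial bookkeeping, so the main obstacle is not conceptual: it is keeping careful track, after the two successive reductions, of which rows and columns of $I^{[a,t]}(\omega_1,\dots,\omega_n)$ survive, and verifying that the resulting corner blocks are again Chen matrices covered by Proposition~\ref{determinantformulas}. I expect the block-triangular identification of $M_i(t)$ — in particular checking that the top-right block vanishes and that the bottom-right block is precisely $I^{[a,t]}(\omega_{i+1},\dots,\omega_n)$ — to be the step most prone to off-by-one errors.
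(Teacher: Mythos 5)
Your proof is correct, but it follows a genuinely different route from the paper's. The paper proves the identity by induction on the index $i$: it writes $\Phi_i'(t)=\omega_1(t)\Omega_i(t)$, expands $\det(\Omega_i,\Omega_i',\dots,\Omega_i^{(n-2)})$ along the first row, and recognizes the result as the same statement for the shifted weight system $(\omega_2,\dots,\omega_n)$, so that the base case $\Phi_1$ is handled by Proposition~\ref{determinantgeneral} and the inductive step peels off one weight at a time. You instead avoid induction on $i$ altogether: since $\Phi_i$ is $\phi$ with its $i$-th coordinate deleted, the column relations $\phi^{(k)}=\sum_{j<k}\rho_{j,k}\phi^{(j)}+\omega_1\cdots\omega_k\Psi_k$ descend coordinate-wise, and each determinant becomes (up to the weight prefactor) a complementary minor of the single Chen matrix $I^{[a,t]}(\omega_1,\dots,\omega_n)$, which you evaluate by a block-lower-triangular decomposition and two appeals to Proposition~\ref{determinantformulas}. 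I checked the bookkeeping you flagged as delicate: the first row of $N_i$ is indeed $(1,0,\dots,0)$ for $i\geq 2$, the rows $j=2,\dots,i-1$ of $M_i$ do place their last nonzero entry $1$ on the diagonal of the top-left block, the top-right block vanishes because $a_{jk}=0$ for $k>j+1$, and the bottom-right block is exactly $I^{[a,t]}(\omega_{i+1},\dots,\omega_n)$; the degenerate cases $i=1$, $i=2$ and $i=n$ all come out right. Your version buys a uniform, induction-free treatment that makes transparent why all these Wronskian-type determinants are minors of one Chen matrix, at the cost of slightly heavier index bookkeeping; the paper's induction is shorter to state but requires re-running the argument on shifted weight systems at each step.
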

\begin{proof}
We will show the proposition by induction on the index $i$. 
Let us start with the determinant formula for $\Phi_1$. We have 
$\Phi_1'(t) = \omega_1(t) \Omega_1(t)$, where the function $\Omega_1$ 
is given by 
$$
\Omega_1(t) =  (L_{\omega_2}^{[a,t]},L_{\omega_2 \omega_3}^{[a,t]},...,
L_{\omega_2 \omega_3 ...\omega_n}^{[a,t]})^{T}.
$$
Therefore, we have 
$$
\det\left(\Phi_1'(t),...,\Phi_1^{(n-1)}(t)\right) =
\omega_1^{n-1}(t)\det(\Omega_1(t),\Omega_1'(t), ...,\Omega_1^{(n-2)}).
$$
Applying Proposition \ref{determinantgeneral} to the Chebyshev function $\Omega_1$ gives
$$ 
\det(\Omega_1(t),\Omega_1'(t), ...,\Omega_1^{(n-2)}) = 
\omega_{2}^{n-2}(t)...\omega_{n-1}(t) 
L_{\omega_n \omega_{n-1} ...\omega_2}^{[a,t]}.
$$
Therefore, we have shown the proposition for $\Phi_1$. Let us assume 
the proposition to be true for any $j$ such that $1 \leq j < i$. 
We have $\Phi_i'(t) = \omega_1(t) \Omega_i(t)$, where $\Omega_i$ is given by 
$$
\Omega_i(t) =  (1,L_{\omega_2}^{[a,t]},...L_{\omega_2... \omega_{i-1}}^{[a,t]},
L_{\omega_2 ....\omega_{i+1}}^{[a,t]},...,L_{\omega_2 ....\omega_{n}}^{[a,t]})^{T}.
$$
Therefore, we have 
\begin{equation}\label{local1}
\det\left(\Phi_i'(t),...,\Phi_i^{(n-1)}(t)\right) =
\omega_1^{n-1}(t)\det \left(\Omega_i(t),\Omega_i'(t), ...,\Omega_i^{(n-2)}\right).
\end{equation}
Expanding the determinant $\det(\Omega_i(t),\Omega_i'(t), ...,\Omega_i^{(n-2)})$
down the first row, shows that 
$
\det(\Omega_i(t),\Omega_i'(t), ...,\Omega_i^{(n-2)}) = \det(\rho_1'(t),...,\rho_1^{(n-2)}(t))
$
where $\rho_1$ is given by 
$$
\rho_1(t) = (L_{\omega_2}^{[a,t]},...L_{\omega_2... \omega_{i-1}}^{[a,t]},
L_{\omega_2 ....\omega_{i+1}}^{[a,t]},...,L_{\omega_2 ....\omega_{n}}^{[a,t]})^{T}.
$$
By the induction hypothesis, we have 
$$
\det(\rho_1'(t),...,\rho_1^{(n-2)}(t)) = 
\omega_2^{n-2}(t)...\omega_{n-1}(t) 
L^{[a,t]}_{\omega_{n}...\omega_{i+1}}. 
$$
Inserting the result of the last equation into (\ref{local1}) leads to the desired 
result.
\end{proof}

\noindent 
Let $\phi$ be the Chebyshev function of order $n$ on an interval $I$ 
defined in (\ref{chebyshevfunction}). Let us denote by  $\phi^{*}(t)$ the function 
$$
\phi^{*}(t) = \left(\phi^{*}_1(t), \phi^{*}_2(t),...,\phi^{*}_{n+1}(t) \right)^{T} =  \left(L^{[a,t]}_{\omega_{n}...\omega_{1}},
L^{[a,t]}_{\omega_{n}...\omega_{2}},L^{[a,t]}_{\omega_{n}},1 \right)^{T}. 
$$
Using the notation 
$$
D(f_1,...,f_n;x_1,...,x_n) = \det \left( f_{j}(x_k) \right); \quad 1 \leq j,k \leq n,
$$ 
we have the following expression of the Chebyshev blossom of the function $\phi$ 
\begin{theorem}\label{chebyshevblossomformulas}
For any pairwise distinct real numbers $u_1,...,u_n$ in the interval $I$,
the Chebyshev blossom of the function $\phi$ is given by 
$\varphi = (\varphi_1,...,\varphi_n)^{T}$, where $\varphi_i$ is given by  
\begin{equation}\label{blossomformula}
\varphi_{i}(u_1,...,u_n) = \frac{D(\phi^{*}_1,...,\phi^{*}_{i},\phi^{*}_{i+2},...,
\phi^{*}_{n+1};u_1,...,u_n)}{D(\phi^{*}_2,\phi^{*}_3,...,
\phi^{*}_{n+1};u_1,...,u_n)}.
\end{equation}  
\end{theorem}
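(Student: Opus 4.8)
The plan is to check directly that the vector $X=(X_1,\dots,X_n)^{T}$ whose components are the right-hand sides of (\ref{blossomformula}) satisfies the defining linear system (\ref{linearsystem}) of the blossom, and then to conclude by uniqueness: taking $m=n$, $r=n$ and all $\mu_{k}=1$ in (\ref{intersection}) shows that $\cap_{k=1}^{n}Osc_{n-1}\phi(u_{k})$ is a single point, so (\ref{linearsystem}) has exactly one solution, namely $\varphi(u_1,\dots,u_n)$. Hence it suffices to verify the $n$ scalar equations.

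Fix $i$ and expand the determinant in (\ref{linearsystem}) along its first column $X-\phi(u_{i})$. Let $\Phi_{1},\dots,\Phi_{n}$ be the functions introduced just before Proposition \ref{determinantgeneral2}; for each $k$, $\Phi_{k}$ is obtained from $\phi$ by deleting its $k$-th coordinate, and since differentiation commutes with coordinate deletion the $(k,1)$-minor of the matrix in (\ref{linearsystem}) is exactly $\det(\Phi_{k}'(u_{i}),\dots,\Phi_{k}^{(n-1)}(u_{i}))$. Therefore the $i$-th equation becomes
$$
\sum_{k=1}^{n}(-1)^{k+1}X_{k}\det\!\left(\Phi_{k}'(u_{i}),\dots,\Phi_{k}^{(n-1)}(u_{i})\right)=\det\!\left(\phi(u_{i}),\phi'(u_{i}),\dots,\phi^{(n-1)}(u_{i})\right),
$$
the right-hand side being precisely the cofactor expansion of $\det(\phi(u_{i}),\phi'(u_{i}),\dots,\phi^{(n-1)}(u_{i}))$ along its first column.

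Next I would substitute the two determinant identities already available: Proposition \ref{determinantgeneral} evaluates the right-hand side and Proposition \ref{determinantgeneral2} evaluates each cofactor on the left. Both sides acquire the common factor $\omega_{1}^{n-1}(u_{i})\cdots\omega_{n-1}(u_{i})$, which is nonzero because the weights $\omega_{j}$ do not vanish on $I$; cancelling it, and using $L^{[a,u_{i}]}_{\omega_{n}\cdots\omega_{k+1}}=\phi^{*}_{k+1}(u_{i})$ for $k=1,\dots,n$ (the case $k=n$ being covered by the conventions $L^{[a,t]}_{\omega_{n}\cdots\omega_{n+1}}=1=\phi^{*}_{n+1}$) together with $L^{[a,u_{i}]}_{\omega_{n}\cdots\omega_{1}}=\phi^{*}_{1}(u_{i})$, the whole system reduces to
$$
\sum_{k=1}^{n}(-1)^{k+1}X_{k}\,\phi^{*}_{k+1}(u_{i})=\phi^{*}_{1}(u_{i}),\qquad i=1,\dots,n.
$$

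It then remains to solve this $n\times n$ linear system by Cramer's rule. Pulling the scalar $(-1)^{k+1}$ out of the $k$-th column of the coefficient matrix contributes the global sign $(-1)^{n(n-1)/2}$ and leaves $D(\phi^{*}_{2},\dots,\phi^{*}_{n+1};u_{1},\dots,u_{n})$ as denominator; this determinant is nonzero precisely because, as noted above, the system has a unique solution (equivalently, $(1,L^{[a,\cdot]}_{\omega_{n}},L^{[a,\cdot]}_{\omega_{n}\omega_{n-1}},\dots,L^{[a,\cdot]}_{\omega_{n}\cdots\omega_{2}})$ spans the Chebyshev space associated with the Chebyshev function built from the weights $\omega_{n},\omega_{n-1},\dots,\omega_{2}$, hence an extended Chebyshev space whose collocation determinant at $n$ distinct points never vanishes). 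For $X_{m}$ one replaces the $m$-th column by $(\phi^{*}_{1}(u_{i}))_{i}$; pulling the signs out of the remaining $n-1$ columns and then moving the new column to the front through $m-1$ transpositions, the accumulated signs collapse to $+1$ and one is left with $D(\phi^{*}_{1},\dots,\phi^{*}_{m},\phi^{*}_{m+2},\dots,\phi^{*}_{n+1};u_{1},\dots,u_{n})$ in the numerator, which is exactly (\ref{blossomformula}) with $i=m$. The one genuinely fiddly point is this bookkeeping — correctly matching the $(k,1)$-minors to the $\Phi_{k}$'s, including the slightly irregular endpoint cases $\Phi_{1}$ and $\Phi_{n}$, and tracking the Cramer signs — but since every stray sign is a power of $-1$ that cancels between numerator and denominator, no real difficulty arises.
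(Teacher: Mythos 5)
Your proposal is correct and follows essentially the same route as the paper: reduce the defining system (\ref{linearsystem}) via the cofactor expansion evaluated by Propositions \ref{determinantgeneral} and \ref{determinantgeneral2}, cancel the common weight factor to obtain $\sum_{k}(-1)^{k+1}X_k\phi^{*}_{k+1}(u_i)=\phi^{*}_1(u_i)$, and conclude by Cramer's rule. The only difference is that you carry out the sign bookkeeping and the nonvanishing of the denominator explicitly, which the paper leaves implicit.
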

\begin{proof}
From (\ref{linearsystem}), a point $X=(X_1,...,X_n)^{T}$ in $\mathbb{R}^{n}$
belongs to the intersection of the osculating flats of order $n-1$ at the points $\phi(u_i)$
if and only if $X$ satisfies the linear system 
\begin{equation*}
\det(X,\phi'(u_i),...,\phi^{(n-1)}(u_i) = \det(\phi(u_i),\phi'(u_i),...,\phi^{(n-1)}(u_i))
\quad 
i=1,2,...,n.
\end{equation*}
Using Proposition \ref{determinantgeneral} and Proposition \ref{determinantgeneral2},
the last linear system can be rewritten as 
$$
\sum_{j=1}^{n-1} (-1)^{j-1} L^{[a,u_i]}_{\omega_{n},...,\omega_{j+1}} X_{j}
+ (-1)^{(n-1)} X_{n}  
= L^{[a,u_i]}_{\omega_{n},...,\omega_{1}} \quad i=1,...,n.
$$
Therefore, the statement of the theorem is nothing 
but the Cramer rule for solving linear systems.
\end{proof}

If in Theorem \ref{chebyshevblossomformulas} some of the real numbers 
$u_i$ coincident, then we can compute the Chebyshev blossom from
(\ref{blossomformula}) by a straightforward iterative application of 
the l'H\^{o}pital's rule. 
\section{Young Diagrams and Schur Functions}
In this section, we fix notations and review some basic concepts 
in the theory of Schur functions. We will follow the standard Macdonald's
notations \cite{Macdonald}
\vskip 0.2 cm
\noindent{\bf{Schur functions: }}
A sequence of non-increasing non-negative integers 
\begin{equation}\label{partition}
(\lambda_1,\lambda_2,...,\lambda_{i},...), \quad 
\lambda_1 \geq \lambda_2 \geq ...\geq \lambda_i \geq ...
\end{equation}
containing only finitely many non-zero terms is called a 
{\it partition}. The total number of non-zero components, 
$l(\lambda)$, is called the length of the partition 
$\lambda$. We will always ignore the difference between two partitions 
that differ only in the number of their trailing zeros. The non-zero $\lambda_i$
of the partition in (\ref{partition}) will be called the {\it parts} of $\lambda$.
The {\it{weight}} $|\lambda|$ of a partition $\lambda$ is defined as the sum its parts 
i.e., $|\lambda| = \sum_{i=1}^{\infty} \lambda_i$. We will find it sometimes 
convenient to write a partition by the common notation that indicate 
the number of times each integer appears as a part in the partition, for example
we write the partition $\lambda = (4,4,4,3,3,1)$ as $\lambda = (4^{3},3^{2},1)$.    
Given a partition $\lambda$, the Schur symmetric function 
$S_{\lambda}(u_1,...,u_n)$, where $n \geq l(\lambda)$ is an element of 
the ring $\mathbb{Z}[u_1,...,u_n]$ defined as the ratio of two determinants 
\begin{equation}\label{schurdeterminant}
S_{\lambda}(u_1,...,u_n) = \frac{\det ( u_i^{\lambda_j + n - j} )_{
{1 \leq i, j \leq n}}}{\det ( u_i^{n - j} )_ 
{{1 \leq i, j \leq n}}}.
\end{equation}
The denominator on the right-hand side of (\ref{schurdeterminant})
is the Vandermonde determinant, equal to the product 
$$
V(u_1,...,u_n) = \prod_{1\leq i < j \leq n}(u_i - u_j).
$$
We will adopt the convention that $S_\lambda(u_1,...,u_n) \equiv 0$
if $l(\lambda) > n$. From the definition, the Schur function associated 
with the {\it empty} partition $\lambda = (0,...,0,..)$ 
is $S_{\lambda}(u_1,...,u_n) \equiv 1$. For the partition $\lambda = (r)$,
the Schur function $S_{\lambda}$ is the complete symmetric function $h_{r}$ i.e., 
$$
S_{(r)}(u_1,u_2,...,u_n) = h_{r}(u_1,...,u_n) = 
\sum_{i_1 \leq i_2 \leq ...\leq i_r} u_{i_1} u_{i_2}...u_{i_r},
$$ 
while for the partition $\lambda = (1^r)$ with $r \leq n$, 
the Schur function $S_{(1^r)}$ is given by the elementary symmetric function
$e_r$ i.e,
$$
S_{(1^{r})}(u_1,u_2,...,u_n) = e_{r}(u_1,...,u_n) = 
\sum_{i_1 < i_2 < ...< i_r} u_{i_1} u_{i_2}...u_{i_r}.
$$ 
A direct consequence of the definition is the following
\begin{equation}\label{schurplusones}
S_{(\lambda_1+1,\lambda_2+1,...,\lambda_n+1)}(u_1,...,u_n) = 
u_1 u_2 ...u_n S_{(\lambda_1,\lambda_2,...,\lambda_n)}(u_1,...,u_n).
\end{equation}
The Schur function $S_{\lambda}$ can be expressed in terms of the 
complete symmetric functions through the Jacobi-Trudi formula
\begin{equation}\label{jacobi-trudi}
S_{\lambda} = \det \left( h_{\lambda_i-i+j} \right)_{1 \leq i, j \leq n},
\end{equation} 
where we assume that $h_{m}\equiv 0$ if $m < 0$.
The {\it{conjugate}}, $\lambda'$, of a partition
$\lambda$ is the partition whose Young diagram is the transpose of the Young 
diagram of $\lambda$, equivalently $\lambda'_{i} = Card \{j | \lambda_j \geq i \}.$
Using the conjugate partition, the Schur function can be expressed in term of the 
elementary symmetric functions through the N\"agelsbach-Kostka formula
\begin{equation}\label{kostka}
S_{\lambda} = \det \left( e_{\lambda_i'-i+j} \right)_{1 \leq i, j \leq n},
\end{equation} 
where we assume that $e_{m}\equiv 0$ if $m < 0$.
Throughout this work, we will use the notation 
$$
S_{\lambda}(u_1^{m_1},u_2^{m_2},...,u_k^{m_k}),
$$ 
to mean the evaluation of the Schur function in which the argument $u_1$ 
is repeated $m_1$ times, the argument $u_2$ is repeated $m_2$ times and so on. 
\vskip 0.2 cm
\noindent{\bf{Combinatorial definition of Schur functions: }}The {\it Young diagram}
of a partition $\lambda$ is a sequence of $l(\lambda )$
left-justified row of boxes, with the number of boxes in the $i$th row being
$\lambda_i$ for each $i$. A box $x =(i,j)$ in the diagram of $\lambda$ 
is the box in row $i$ from the top and column $j$ from the left. For example 
the Young diagram of the partition $\lambda=(5,4,2)$ and the coordinate of its boxes  
are
$$
\lambda = (5,4,2) \qquad 
\newcommand{\ff}{\mbox{\small{(1,1)}}}
\newcommand{\fs}{\mbox{\small{(1,2)}}}
\newcommand{\ft}{\mbox{\small{(1,3)}}}
\newcommand{\ffo}{\mbox{\small{(1,4)}}}
\newcommand{\ffi}{\mbox{\small{(1,5)}}}
\newcommand{\sfs}{\mbox{\small{(2,1)}}}
\newcommand{\sss}{\mbox{\small{(2,2)}}}
\newcommand{\st}{\mbox{\small{(2,3)}}}
\newcommand{\sfo}{\mbox{\small{(2,4)}}}
\newcommand{\tf}{\mbox{\small{(3,1)}}}
\newcommand{\tss}{\mbox{\small{(3,2)}}}
{\Yvcentermath1 \Yboxdim{20pt} \young(\ff\fs\ft\ffo\ffi,\sfs\sss\st\sfo,\tf\tss)} 
$$  
A {\it semi-standard tableau} $T^{\lambda}$ with entries less or equal to $n$ 
is a filling-in the boxes of $\lambda$ with numbers from $\{ 1, 2, ..., n \}$
making the rows increasing when read from left to right and the column
strictly increasing when read from the top to bottom. We say that the shape of
$T^{\lambda}$ is $\lambda$. For each semi-standard tableau $T^{\lambda}$ of the shape
$\lambda$, we denote by $p_i$ the number of occurrence of the number $i$ in the
semi-standard tableau $T^{\lambda}$. The weight of $T^{\lambda}$ is then defined 
as the monomial
$$
u^{T^{\lambda}} = u_1^{p_1} u_2^{p_2} ...u_n^{p_n}. 
$$
For a given partition $\lambda$ of length at most $n$, 
the Schur function $S_{\lambda}(u_1,...,u_n)$
is given by
$$
S_{\lambda}(u_1,u_2,...,u_n) = \sum_{T^{\lambda}} u^{T^{\lambda}},
$$
where the sum run over all the semi-standard tableaux of shape $\lambda$ and entries 
at most $n$.

\begin{example}
Consider the partition $\lambda = (2,1)$ and $n = 3$. Then, the
Young diagram of $\lambda$ and the complete list of semi-standard tableaux 
of shape $\lambda$ are
$$
\young(11,2) \quad \young(11,3) \quad 
\young(12,3) \quad \young(13,2) \quad 
\young(12,2) \quad \young(13,3) \quad
\young(22,3) \quad \young(23,3)
$$
Therefore, the Schur function associated with the partition $\lambda$
is given by 
$$
S_{\lambda}(u_1,u_2,u_3) = u_1^2 u_2 + u_1^2 u_3 + 2 u_1 u_2 u_3 +
u_2^2 u_3 + u_2 u_3^2 + u_1 u_2^2 + u_1 u_3^2.
$$
\end{example}
\vskip 0.2 cm
\noindent{\bf{Giambelli formula: }}
The Young diagram of a partition $\lambda$ is said to be a \textit{hook diagram} 
if the partition $\lambda$ is of the shape $\lambda = (p+1,1^{q})$ i.e., 
\vskip 1cm \hskip 4.277cm 
$q \left\{\Yvcentermath1 {\yng(1,1,1,1)}\right.$
\vskip -2.55 cm \hskip 4.82cm  
$\overbrace{\Yvcentermath1 {\yng(5)}}^{p+1}$ 
\vskip 1.8 cm
In Frobenius notation, we write the partition $\lambda$ as $(p|q)$.
Expanding the Jacobi-Trudi formula (\ref{jacobi-trudi}) along the top row,
shows that the Schur function associated with the partition $(p|q)$ is given by 
\begin{equation}\label{hook}
S_{(p|q)} = h_{p+1} e_{q} - h_{p+2}e_{q-1} + .... + (-1)^{q} h_{p+q+1}.
\end{equation}
Any partition $\lambda$ can be represented in Frobenius notation as 
\begin{equation}\label{frobenius}
\lambda = ( \alpha_1, ..., \alpha_r|\beta_1,..., \beta_r),
\end{equation}
where $r$ is the number of boxes in the main diagonal of the Young 
diagram of $\lambda$ and for $i=1,...,r$, $\alpha_i$ (resp. $\beta_i$)
is the number of boxes in the $i$th row (resp. the $i$th column) of 
$\lambda$ to the right of $(i,i)$ (resp. below $(i,i)$). For example 
the partition $\lambda = (6,4,2,1^{2})$, depicted below,
can be written in Frobenius notation as $\lambda = (5,2|4,1)$ 
\newcommand{\bbbox}{\blacksquare}
\newcommand{\nth}{\mbox{}}
$$
\lambda = {\Yvcentermath1 \young(\bbbox\nth\nth\nth\nth\nth,\nth\bbbox\nth\nth,\nth\nth,\nth,\nth)} 
$$ 
With the decomposition (\ref{frobenius}) of $\lambda$ in hook diagrams,
the Giambelli formula states that 
\begin{equation}\label{giambelli}
S_{\lambda} = \det ( S_{\left( \alpha_i | \beta_j \right)} )_{1\leq
i,j \leq r}
\end{equation}
We will adopt the convention that $S_{(\alpha|\beta)} \equiv 0$ if $\alpha$ or $\beta$
are negatives.
\vskip 0.2 cm
\noindent{\bf{Hook length formula: }}
The {\it {hook-length}} of a partition $\lambda$ at a box $x = (i,j)$ is defined
to be $h(x) = \lambda_i + \lambda'_{i} - i - j + 1$, where $\lambda'$ is the conjugate
partition of $\lambda$. In other word 
the hook-length at the box $x$ is the number of boxes that are in the same row 
to the right of it plus those boxes in the same column below it,
plus one (for the box itself). The {\it content} of the partition $\lambda$ at the box 
$x = (i,j)$ is defined as $c(x) = j-i$. The hook-length and the content of every box 
of the partition $\lambda = (5,4,2)$ is given as  
\newcommand{\negone}{\mbox{-1}}
\newcommand{\negtwo}{\mbox{-2}}
$$ 
h(\lambda) = {\Yvcentermath1 \young(76431,5421,21)} 
\qquad
Content(\lambda) = {\Yvcentermath1 \young(01234,\negone 012,\negtwo \negone)} 
$$
With these notations, the number of semi-standard 
tableaux of shape $\lambda$ with entries at most $n$ is given by 
the so-called hook-length formula as 
\begin{equation}\label{hooklengthformulas}
f_{\lambda}(n) = S_{\lambda}(\overbrace{1,1,...,1}^{n}) = \prod_{x \in \lambda}\frac{n+c(x)}{h(x)}.     
\end{equation}
In particular, we have the following useful hook-length formulas 
\begin{equation}\label{normalization1}
f_{(1^r)}(n) = \binom{n}{r}, \quad f_{(r)}(n) = \binom{n+r-1}{r} 
\end{equation}
and 
\begin{equation}\label{normalization2}
f_{(p|q)}(n) = \frac{n}{p+q+1} \binom{n+p}{p} \binom{n-1}{q}.
\end{equation}
We will adopt the convention that for every integer $n$, the hook-length 
of the empty partition $\lambda = (0,0,...)$ is given by  
$f_{\emptyset}(n) = 1$.
\vskip 0.2 cm
\noindent{\bf{Skew Schur functions and Branching rule: }}
Given two partitions, $\lambda$ and $\mu$, such that $\mu \subset \lambda$ i.e.,
$\mu_i \leq \lambda_i$, $i \geq 1$, a Young diagram with skew shape $\lambda/\mu$ is the
Young diagram of $\lambda$ with the Young diagram of $\mu$ removed from its upper left-hand
corner. Note that the standard shape $\lambda$ is just the skew shape $\lambda/\mu$ 
with $\mu = \emptyset$. For example, we have 
$$
\newcommand{\nothings}{\mbox{}}
(4,3,1)/(2,1) ={\Yvcentermath1 
\young(::\nothings\nothings,:\nothings\nothings,\nothings)}
$$
The skew Schur function $S_{\lambda/\mu}$ is defined as 
$$
S_{\lambda/\mu}(u_1,u_2,...,u_n) = \sum_{T^{\lambda/\mu}} x^{T^{\lambda/\mu}}
$$
where the sum run over all the semi-standard tableaux of shape $\lambda/\mu$
and entries at most $n$. Skew Schur functions have a determinant expression as
$$
S_{\lambda/\mu} = det(h_{\lambda_i -\mu_j - i + j})_{1\leq i,j \leq n}
$$ 
Using the skew Schur functions, we have the following branching rule
\begin{equation}
S_{\lambda}(u_1,...,u_{j},u_{j+1},...,u_{n}) = 
\sum_{\mu \subset \lambda} S_{\mu}(u_1,...,u_j) 
S_{\lambda/\mu}(u_{j+1},...,u_{n}).
\end{equation} 
Particularly interesting for this work, the following two branching rules 
\begin{equation}\label{branchingrule1}
S_{\lambda}(u_1,...,u_{n-1},u_n) = 
\sum_{\mu \prec \lambda} S_{\mu}(u_1,...,u_{n-1}) u_{n}^{|\lambda|-|\mu|},
\end{equation}
where the sum is over are the interlacing partitions $\mu$ i.e., partition 
$\mu = (\mu_1,...,$ $\mu_{n-1})$ such that
\begin{equation}
\lambda_1 \geq \mu_1 \geq \lambda_2 \geq ...\mu_{n-1} \geq \lambda_n,  
\end{equation}
and 
\begin{equation}\label{branchingrule2}
S_{\lambda}(u_1,...,u_{n-1},u_n) = 
\sum_{j=0}^{\lambda_1} S_{\lambda/{(j)}}(u_1,...,u_{n-1}) u_{n}^{j}.
\end{equation}
\section{Blossom in \muntz space with positive integer powers}
It is well known that for any positive real numbers $0 < s_1 < s_2 < ...< s_n$, 
the function $\phi(t) = (t^{s_1},t^{s_2},...,t^{s_n})^{T}$ is a 
Chebyshev function of order $n$ on the interval $]0,\infty[$.
In this section, we give the Chebyshev blossom of the function $\phi$ in case the
parameters $s_i, i=1,...,n$ are positive integers. We will first associate the sequence 
$(s_1,...,s_n)$ with a partition $\lambda$ that will allow us to give the expression 
of the blossom in terms of Schur functions. We will first start with a definition  

\begin{definition}
Let $\lambda = (\lambda_1,\lambda_2,...,\lambda_n)$ be a partition 
of length $l(\lambda)$ at most $n$. The \muntz tableau associated to the 
partition $\lambda$ is given by a sequence of $n+1$ partitions 
$(\lambda^{(0)},\lambda^{(1)},\lambda^{(2)},...,\lambda^{(n)})$
defined as follows:   
$$
\lambda^{(0)} =(\lambda_2,\lambda_3,...,\lambda_n),
$$
for $i=1,2,...n-1$
$$
\lambda^{(i)} = (\lambda_1 + 1,\lambda_2 + 1,...,\lambda_{i}+1,
\lambda_{i+2},...,\lambda_n)
$$
and 
$$
\lambda^{(n)} = (\lambda_1+1,\lambda_2+1,...,\lambda_n+1).
$$
\end{definition}

To remember the construction of the \muntz tableau we can remark that 
the partition $\lambda^{(0)}$ is obtained form the partition $\lambda$ 
by deleting the first row. The partition $\lambda^{(0)}$ will play an important role
in this work and will be called {\it {the bottom partition of $\lambda$}}.
The partition $\lambda^{(i)}$ is obtained by adding 
a box to the first $i$ rows of the partition $\lambda$, deleting the $i+1$ row 
and keeping all the other rows the same.
\begin{example}
The \muntz tableau associated with the partition $\lambda = (4,2)$ and $n =3$ 
is depicted as
$$  
\lambda = {\Yvcentermath1 \yng(4,2)} \quad 
\lambda^{(0)} = {\Yvcentermath1 \yng(2)} \quad
\lambda^{(1)} = {\Yvcentermath1 \yng(5)} \quad
\lambda^{(2)} = {\Yvcentermath1 \yng(5,3)} \quad
$$
$$
\lambda^{(3)} = {\Yvcentermath1 \yng(5,3,1)}
$$   
\end{example}       
To a given partition $\lambda = (\lambda_1,...,\lambda_n)$ of length at most $n$,
we define the following Chebyshev function of order $n$ 
\begin{equation} \label{chebyshevconvention}
\phi(t) = \left( t^{\lambda_1 - \lambda_2 + 1}, t^{\lambda_1 - \lambda_3 + 2},...,
t^{\lambda_1 - \lambda_n + (n-1)}, t^{\lambda_1+ n} \right). 
\end{equation}
The associated Chebyshev space $\mathcal{E}(\phi)$ will be denoted by
$\mathcal{E}_{\lambda}(n)$ and will be called the \muntz space associated with the partition
$\lambda$. The function $\phi$ will be called the \muntz function associated with $\lambda$ 
and conversely, the partition $\lambda$ will be called the partition associated with the function 
$\phi$. We have the following

\begin{theorem}\label{theoremblossom}
For any sequence $( u_1, u_2, ..., u_n ) \in ] 0, + \infty
\lbrack^n$, the blossom $\varphi = (\varphi_1,\varphi_2,...,\varphi_n)^{T}$
of the Chebyshev curve $\phi$ given in (\ref{chebyshevconvention}) is given by
$$
\varphi_i ( u_1, u_2, ..., u_n ) = \frac{f_{\lambda^{(0)}} ( n ) S_{\lambda^{(i)}}(u_1,
u_2,..., u_n)}{f_{\lambda^{(i)}}(n) S_{\lambda^{(0)}}(u_1,u_2,...,u_n)},
$$
where $(\lambda^{(0)},\lambda^{(1)},...,\lambda^{(n)})$ is the \muntz tableau associated 
with the partition $\lambda$ and $f_{\mu}(n)$ refers to the number of semi-standard
tableaux of shape $\mu$ and entries at most $n$.
\end{theorem}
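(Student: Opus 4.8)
The plan is to reduce everything to Theorem~\ref{chebyshevblossomformulas} by realizing the \muntz function $\phi$ of (\ref{chebyshevconvention}), up to a diagonal rescaling of its components, as a Chebyshev function of the shape (\ref{chebyshevfunction}) built from monomial weights. Concretely, put $a=0$ and, with the convention $\lambda_{n+1}=0$, set $\omega_j(t)=t^{\lambda_j-\lambda_{j+1}}$ for $j=1,\ldots,n$; since $\lambda$ is a partition these exponents are non-negative, so each $\omega_j$ is $C^\infty$ and non-vanishing on $I=\,]0,+\infty[$. A short telescoping computation (using $\sum_{l=j}^{n}(\lambda_l-\lambda_{l+1})=\lambda_j$) gives $L^{[0,t]}_{\omega_1\cdots\omega_j}=N_j^{-1}\,t^{\lambda_1-\lambda_{j+1}+j}$ for $j<n$ and $L^{[0,t]}_{\omega_1\cdots\omega_n}=N_n^{-1}\,t^{\lambda_1+n}$ for suitable positive constants $N_j$, so the curve (\ref{chebyshevfunction}) attached to these weights is $\widehat\phi=\mathrm{diag}(N_1^{-1},\ldots,N_n^{-1})\,\phi$. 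Since the Chebyshev blossom is covariant under affine maps, the blossom of $\widehat\phi$ is $\mathrm{diag}(N_j^{-1})\,\varphi$, so it is enough to compute the blossom of $\widehat\phi$ and multiply the $i$th component back by $N_i$.

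Next I would compute the auxiliary curve $\phi^{*}$ of Theorem~\ref{chebyshevblossomformulas} for these weights. By the same telescoping, $\phi^{*}_m(t)=L^{[0,t]}_{\omega_n\omega_{n-1}\cdots\omega_m}=M_m^{-1}\,t^{\lambda_m+n-m+1}$ for $m=1,\ldots,n$ and $\phi^{*}_{n+1}(t)=1$, so every component of $\phi^{*}$ is a monomial and the exponents $\lambda_1+n>\lambda_2+n-1>\cdots>\lambda_n+1>0$ are strictly decreasing. Hence each $n\times n$ determinant $D(\,\cdot\,;u_1,\ldots,u_n)$ in (\ref{blossomformula}) is, after pulling out the constants $M_m^{-1}$, an alternant $\det\!\big(u_k^{a_j}\big)$ with a strictly decreasing exponent sequence, and the bialternant formula (\ref{schurdeterminant}) identifies it as $S_\mu(u)\,V(u_1,\ldots,u_n)$, where $\mu$ is the partition with $a_j=\mu_j+n-j$. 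Reading off the exponents, the denominator $D(\phi^{*}_2,\ldots,\phi^{*}_{n+1};u)$ corresponds to $\mu=(\lambda_2,\ldots,\lambda_n)=\lambda^{(0)}$, while the numerator $D(\phi^{*}_1,\ldots,\phi^{*}_i,\phi^{*}_{i+2},\ldots,\phi^{*}_{n+1};u)$, obtained by deleting $\phi^{*}_{i+1}$, corresponds exactly to $\mu=\lambda^{(i)}$ — which is precisely how the \muntz tableau was defined. The Vandermonde factors cancel, leaving
\[
\varphi_i(u_1,\ldots,u_n)=C_i\,\frac{S_{\lambda^{(i)}}(u_1,\ldots,u_n)}{S_{\lambda^{(0)}}(u_1,\ldots,u_n)},
\]
where the constant $C_i$ collects $N_i$ together with the $M_m$'s. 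This holds first for pairwise distinct $u_k$, and then for all $(u_1,\ldots,u_n)\in\,]0,+\infty[^n$, since $S_{\lambda^{(0)}}>0$ there and (\ref{blossomformula}) still produces the blossom at coincident arguments via l'H\^opital's rule, the Schur polynomials being continuous.

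It then remains to identify $C_i$, which I would do without unwinding the $N_j,M_m$: simply restrict to the diagonal. On one hand $\varphi_i(t,\ldots,t)=\phi_i(t)$, equal to $t^{\lambda_1-\lambda_{i+1}+i}$ for $i<n$ and $t^{\lambda_1+n}$ for $i=n$; on the other hand $S_\mu(t,\ldots,t)=f_\mu(n)\,t^{|\mu|}$ by the hook-length formula (\ref{hooklengthformulas}). A direct count gives $|\lambda^{(i)}|-|\lambda^{(0)}|=\lambda_1-\lambda_{i+1}+i$ (resp.\ $\lambda_1+n$ when $i=n$), so the diagonal identity forces $C_i\,f_{\lambda^{(i)}}(n)/f_{\lambda^{(0)}}(n)=1$, i.e.\ $C_i=f_{\lambda^{(0)}}(n)/f_{\lambda^{(i)}}(n)$, which is the asserted formula. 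I expect the only genuine work to be the bookkeeping in the second step: correctly matching the exponent sequences of the two column-deleted alternants to the partitions $\lambda^{(0)}$ and $\lambda^{(i)}$ of the \muntz tableau — in particular the boundary case $i=n$, where it is $\phi^{*}_{n+1}$ that is deleted — and checking that these sequences are strictly decreasing so that (\ref{schurdeterminant}) applies. The telescoping evaluation of the Chen iterated integrals, the cancellation of the Vandermonde determinants, and the diagonal determination of $C_i$ are routine.
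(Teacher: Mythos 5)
Your proposal is correct and follows essentially the same route as the paper's own proof: both realize the \muntz function via monomial weight functions $\omega_j(t)=t^{\lambda_j-\lambda_{j+1}}$, apply Theorem~\ref{chebyshevblossomformulas} to obtain the blossom as a ratio of alternants, identify these with $S_{\lambda^{(i)}}$ and $S_{\lambda^{(0)}}$ after cancelling Vandermonde factors, extend to coincident arguments by continuity, and fix the constants by the diagonal property $\varphi(t,\ldots,t)=\phi(t)$. The only cosmetic difference is that you track the normalizing constants through a diagonal affine rescaling and the hook-length evaluation $S_\mu(t,\ldots,t)=f_\mu(n)t^{|\mu|}$, whereas the paper absorbs them into unnamed constants and evaluates at $u_1=\cdots=u_n=1$.
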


\begin{proof} 
We first assume that all the positive real numbers $u_i, i=1,...,n$
are pairwise distinct. Consider the functions 
$\omega_1, \omega_2,...,\omega_n$  
such that for $i=1,2,...,n-1$ 
\begin{equation}\label{omegafunctions}
\phi_i(t) = L_{\omega_1 \omega_2 ...\omega_k}^{[0,t]} = t^{\lambda_1 - \lambda_{i+1} + i}
\quad \textnormal{and} \quad 
\phi_n(t) = L_{\omega_1 \omega_2 ...\omega_n}^{[0,t]} = t^{\lambda_1+n}.
\end{equation}
Applying successive derivatives to (\ref{omegafunctions}) 
shows that there exist positive constants $K_i, i=1,...,n$ such that 
$$
\omega_i(t) = K_i t^{\lambda_i - \lambda_{i+1}}, 
\quad \textnormal{for} \quad i=1,...,n-1
\quad  \textnormal{and} \quad
\omega_n(t) = K_n t^{\lambda_n}. 
$$
Computing the Chen iterated integrals of the obtained function $\omega_i, i=1,...,n$,
shows that there exist constants $C_k, k=1,...,n$ such that 
$$
L_{\omega_n \omega_{n-1}...\omega_{k}}^{[0,t]} = C_i t^{\lambda_k + (n-k+1)}. 
$$
From Theorem \ref{chebyshevblossomformulas}, the Chebyshev blossom of the function 
$\phi$ can be expressed as 
\begin{equation}\label{localdeterminant}
\varphi_i(u_1,...,u_n) = C^{'}_{i} \frac{det(u_i^{\rho_{j}+n-j})_{1\leq i,j\leq n}}
{det(u_i^{\lambda_{j+1}+n-j})_{1\leq i,j\leq n}}, 
\end{equation}  
where
$$ 
C'_{i}, i=1,...,n \quad {\textnormal{are constants and}} \quad
\rho = (\lambda_1 + 1,...,\lambda_{i} + 1, \lambda_{i+2},...,\lambda_n) = \lambda^{(i)}.
$$
Dividing both the numerator and the denominator of the right hand side of 
(\ref{localdeterminant}) by the Vandermonde determinant 
$\prod_{1 \leq i <j \leq n}(u_i - u_j)$ leads to 
\begin{equation}\label{localschur}
\varphi_i(u_1,u_2,...,u_n) = C^{'}_{i} \frac{S_{\lambda^{(i)}}(u_1,...,u_n)}
{S_{(\lambda_2,...,\lambda_n)}(u_1,...,u_n)}.
\end{equation}
Now, as the expression (\ref{localschur}) still make sense even if some of 
the $u_i$ coincident, and since the process of intersecting osculating flat
is a smooth process, the Chebyshev blossom of the function $\phi$ evaluated at any positive
real numbers $u_i, i=1,...,n$ is still given by the expression (\ref{localschur}).
The value of the constants $C'_i$ in (\ref{localschur}) can be obtained as follows : 
From the definition of the Chebyshev blossom, we have $\varphi(t,t,...,t) = \phi(t)$,
then in particular we have $\varphi(1,1,...,1) = (1,1,...,1)$,
which gives the value of the constants as claimed by the Theorem.   
\end{proof}
 
\vskip 0.2 cm
\noindent{\bf{Examples section:}}
Horizontal, vertical and hook Young diagrams occupy an important place
in the combinatorics of Schur functions. Therefore, it is only natural
to define the \muntz spaces associated with these particular Young diagrams
and carry throughout this work their fundamental properties. 
Some times, we will also give low order \muntz spaces  to exhibit 
the use of the combinatorics of Schur functions in solving particular problems. 
We will also define the staircase \muntz space as they have the particularity of 
being, in a sense to be precised, a ``reparametrization'' of the polynomial spaces.     
\vskip 0.2 cm
\noindent{\bf{Polynomial M\"untz space: }}
Consider the Chebyshev curve of order $n$ over the real line $\mathbb{R}$. 
\begin{equation}\label{polymuntz}
\phi(t)=(t,t^2,...,t^n)^{T}.
\end{equation}
The associated partition $\lambda$ is the empty partition and  
the space $\mathcal{E}_{\emptyset}(n)$ is the linear space of 
polynomials of degree $n$. The bottom partition $\lambda^{(0)}$ 
is also an empty partition, while the rest of the \muntz tableau 
is given by $\lambda^{(k)} = (1^k), k=1,...,n.$
Therefore, the Chebyshev blossom of 
$\varphi =(\varphi_1,\varphi_2,...,\varphi_n)^{T}$ of the function $\phi$
is given by  
$$
\varphi_k ( u_1, u_2, ..., u_n ) = \frac{S_{(1^k)}(u_1,...,u_n)}{f_{(1^k)}(n)} = 
\frac{e_{k}(u_1,...,u_n)}{\binom{n}{k}}.
$$

\vskip 0.2 cm
\noindent{\bf{Combinatorial M\"untz space: }} 
Consider the Chebyshev function $\phi(t) = (t,t^2,t^4)^{T}$ of order
$3$ over the interval $]0,\infty[$.
The partition $\lambda$ associated with the curve $\phi$ is given 
by $\lambda = (1,1,1)$. The \muntz tableau associated with $\lambda$ is given
by $\left( \lambda^{(0)} = (1,1), \lambda^{(1)} = (2,1,0), 
\lambda^{(2)} = (2,2,0), \lambda^{(3)} = (2,2,2) \right)$  
$$
\lambda = {\Yvcentermath1 {\yng(1,1,1)}} \quad
\lambda^{(0)} = {\Yvcentermath1 {\yng(1,1)}} \quad 
\lambda^{(1)} = {\Yvcentermath1 {\yng(2,1)}} \quad 
\lambda^{(2)} = {\Yvcentermath1 {\yng(2,2)}} \quad 
\lambda^{(3)} = {\Yvcentermath1 {\yng(2,2,2)}}.
$$
Therefore, the blossom $\varphi =(\varphi_1,\varphi_2,\varphi_3)^{T}$
of the function $\phi$ is given by
$$
\varphi_k(u_1,u_2,u_3) = \frac{f_{\lambda^{(0)}}(3)}{f_{\lambda^{(k)}}(3)}
\frac{S_{\lambda^{(k)}}(u_1,u_2,u_3)}
{S_{\lambda^{(0)}}(u_1,u_2,u_3)}. 
$$
We can now proceed by computing the Schur functions associated
with the partitions in the \muntz tableau. For the
partition $\lambda^{(0)}=(1,1)$, we have 
$$
S_{\lambda^{(0)}}(u_1,u_2,u_3) = e_{2}(u_1,u_2,u_3).
$$
The Schur function associated with the partition $\lambda^{(1)}=(2,1)$
has been already computed in Example 1. The semi-standard tableaux associated with
the partition $\lambda^{(2)} = (2,2)$ and entries at most $3$ are given by 
$$
\young(11,22) \quad \young(11,23) \quad \young(11,33) \quad 
\young(12,23) \quad \young(12,33) \quad \young(22,33)
$$
Therefore, we have 
$$
S_{(2,2)}(u_1,u_2,u_3) = u_1^2 u_2^2 + u_1^2 u_2 u_3 + u_1^2 u_3^2
+ u_2^2 u_1 u_3 + u_3^2 u_2 u_1 + u_2^2 u_3^2.
$$
For the partition $\lambda^{(3)} = (2,2,2)$, we can use (\ref{schurplusones}) 
to deduce that 
$$
S_{(2^3)}(u_1,u_2,u_3) = u_1u_2u_3 S_{(1^3)}(u_1,u_2,u_3)
= u_1u_2u_3 e_3(u_1,u_2,u_3) = u_1^2u_2^2u_3^2.
$$
Therefore, the blossom $\varphi$ of the function $\phi$ is given by 

\begin{equation*}
\tiny{
\varphi(u_1,u_2,u_3) = \frac{1}{8(u_1u_2+u_1u_3+u_2u_3)} 
\begin{pmatrix}
3(u_1+u_2)(u_1+u_3)(u_2+u_3)\\
4\left((u_1^2u_2^2 + u_1^2u_3^2 + u_2^2u_3^2) + u_1u_2u_3(u_1+u_2+u_3) \right)\\
24 u_1^2 u_2^2 u_3^2
\end{pmatrix}}
\end{equation*}
\vskip 0.2 cm
\noindent{\bf {Elementary M\"untz spaces: }} 
Let $k$ and $n$ be two positive integers such that $1 \leq k \leq n$.
Consider the Chebyshev curve of order $n$ over the interval $]0,\infty[$
defined for $k \neq 1$ by
$$ 
\phi(t)=(t,t^2,...,t^{k-1},t^{k+1},...,t^{n+1})^{T},
$$
and $\phi(t)=(t^2,t^3,...,t^{n+1})^{T}$ for $k=1$. 
The partition $\lambda$ associated with the function $\phi$ is 
given by a vertical Young diagram with $k$ boxes, i.e, $\lambda = (1^{k})$. 
For this reason, we will call the curve $\phi$ the $k$th
elementary \muntz curve and the space $\mathcal{E}_{(1^{k})}(n)$ 
the $k$th elementary \muntz space. The bottom partition $\lambda^{(0)}$
is given by $\lambda^{(0)} = (1^{k-1})$, with an associated Schur function
given by $S_{\lambda^{(0)}}(u_1,...,u_n) = e_{k-1}(u_1,...,u_n)$.
The other partitions in the \muntz tableaux are given by 
$$
\lambda^{(i)} = (2^{i},1^{k-i-1}) 
\quad \textnormal{for} \quad i=1,...,k-1 \quad 
\textnormal{and} \quad \lambda^{(i)} = (2^{k},1^{i-k})
\quad \textnormal{for} \quad i=k,...,n.
$$
The Young diagram of the partitions in the \muntz tableau are of 
the form
\vskip 0.5 cm 
$\lambda^{(0)} = (k-1) \left\{ {\Yvcentermath1 {\yng(1,1,1,1)}} \right. \qquad$  
For $i=1,...,k-1,$ we have    
$\lambda^{(i)} = i \left\{ {\Yvcentermath1 {\yng(2,2,2)}} \right.$
\vskip -0.273cm \hskip 7.618cm 
$(k-i-1) \left\{ {\Yvcentermath1 {\yng(1,1,1)}} \right.$

\noindent and for $i=k,...,n \; $   
$\lambda^{(i)} = k \left\{ {\Yvcentermath1 {\yng(2,2,2)}} \right.$
\vskip -0.11cm \hskip 2.7cm 
$(i-k) \left\{ {\Yvcentermath1 {\yng(1,1,1)}} \right.$
\bigskip

\noindent The conjugate of the partition $\lambda^{(i)}$ for $i=1,...,n$ are then given by 
$$
\lambda^{(i)}{'} = (k-1,i)
\quad \textnormal{for} \quad i=1,...,k-1 \quad \textnormal{and} \quad 
\lambda^{(i)}{'} = (i,k)
\quad \textnormal{for} \quad i=k,...,n.
$$
Therefore, Using Theorem \ref{theoremblossom} and the N\"agelsbach-Kostka formula 
(\ref{kostka}), the Chebyshev blossom $\varphi = (\varphi_1,\varphi_2,...,\varphi_n)^{T}$ of the 
function $\phi$ is given by 
$$
\varphi_i = \frac{\binom{n}{k-1}}{\binom{n}{k-1} \binom{n}{i} - \binom{n}{i-1}\binom{n}{k}}
\frac{e_{k-1}e_{i} - e_{i-1}e_{k}}{e_{k-1}} 
\quad \textnormal{for} \quad i=1,...,k-1, 
$$
and 
$$
\varphi_i = \frac{\binom{n}{k-1}}{\binom{n}{k} \binom{n}{i} - \binom{n}{i+1}\binom{n}{k-1}}
\frac{e_{k}e_{i} - e_{i+1}e_{k-1}}{e_{k-1}} 
\quad \textnormal{for} \quad i=k,...,n. 
$$
Of a particularly interesting form is the last component of $\varphi$ as we have 
$$
\varphi_n(u_1,...,u_n) = \frac{\binom{n}{k-1}}{\binom{n}{k}}
\frac{\left(\prod_{i=1}^{n} u_i \right) e_k(u_1,...,u_n)}{e_{k-1}(u_1,...,u_n)}.
$$ 
\vskip 0.2 cm
\noindent{\bf {Complete M\"untz spaces: }}
Let $k$ be a non-negative integer and denote by $\phi$ the Chebyshev curve
of order $n$ over the interval $]0,\infty[$
$$
\phi(t) = (t^{k+1},t^{k+2},...,t^{k+n})^{T}.
$$
The partition associated with the curve $\phi$ is given 
by a horizontal Young diagram with $k$ boxes, i.e., $\lambda = (k)$.
We will call the function $\phi$ the $k$th complete \muntz function
and the associated space $\mathcal{E}_{(k)}(n)$
the $k$th complete \muntz space. The bottom partition 
$\lambda^{(0)}$ is an empty partition, while the other partitions 
in the \muntz tableau are given by $\lambda^{(i)} = (k|i-1)$.
Therefore, the Chebyshev blossom 
$\varphi = (\varphi_1,...,\varphi_n)^{T}$ of $\phi$ is given by 
$$
\varphi_i (u_1,u_2,...,u_n ) = \frac{S_{(k|i-1)}(u_1,u_2,...,u_n)}{f_{(k|i-1)}(n)},
$$
where $S_{(k|i-1)}$ can be expressed in term of the complete and 
elementary symmetric functions according to (\ref{hook}) and the 
normalization constants can be computed using equation
(\ref{normalization2}). 
Note that $\varphi_1 = h_{k+1}/\binom{n+k}{k+1}$, 
while $\varphi_n = e_n h_{k}/\binom{n+k-1}{k}$.   

\vskip 0.2 cm
\noindent{\bf {Hook M\"untz spaces: }}
Let $l$ and $n$ be two positive integers and let $k$ be a positive integer such that $k < n$.
Consider the Chebyshev curve $\phi$ of order $n$ over the interval 
$]0,\infty[$ given by
$$ 
\phi(t) = (t^{l+1},t^{l+2},...,t^{l+k},t^{l+k+2},...,t^{l + n + 1})^{T}.
$$
The partition $\lambda$ associated with the curve $\phi$ is given 
by a $(l,k)$-hook Young diagram, i.e., $\lambda = (l|k)$. Therefore,
the function $\phi$ will be called a $(l,k)$-hook \muntz function, while the
associated space  $\mathcal{E}_{(l|k)}(n)$ will be called  
the $(l,k)$-hook \muntz space. The bottom partition $\lambda^{(0)}$
is given by $\lambda^{(0)} = (1^{k})$, while the other partitions in the 
\muntz tableau are given by  
$$
\lambda^{(i)} = (l+2,2^{i-1},1^{k-i}) 
\quad \textnormal{for} \quad i=1,...,k \quad
$$
and
$$ 
\lambda^{(i)} = (l+2,2^{k},1^{i-k-1})
\quad \textnormal{for} \quad i=k+1,...,n.
$$
Every partition in the \muntz tableau 
has at most two boxes in its main diagonal, thereby, making Giambelli formula
(\ref{giambelli}) useful for the computation of the associated Schur functions.
In Frobenius notation, the partitions in the \muntz tableau are given by 
$$
\lambda^{(i)} = (l+1,0 | k-1,i-2) 
\quad \textnormal{for} \quad i=1,...,k \quad 
$$ 
and
$$
\lambda^{(i)} = (l+1,0|i-1,k-1)
\quad \textnormal{for} \quad i=k+1,...,n.
$$
Therefore, the Chebyshev blossom $\varphi = (\varphi_1,...,\varphi_n)^{T}$ of 
$\phi$ is given by
$$
\varphi_i = \frac{\binom{n}{k}}{f_{\lambda^{(i)}}(n)} \frac{e_{i-1} S_{(l+1|k-1)}-e_{k} S_{(l+1|i-2)}}{e_k}
\quad \textnormal{for} \quad i=1,...,k
$$ 
and 
$$
\varphi_i = \frac{\binom{n}{k}}{f_{\lambda^{(i)}}(n)} \frac{e_{k} S_{(l+1|i-1)}-e_{i} S_{(l+1|k-1)}}{e_k}
\quad \textnormal{for} \quad i=k+1,...,n,
$$ 
where the normalizing factors $f_{\lambda^{(i)}}(n)$ can be computed using equations 
(\ref{normalization1}) and (\ref{normalization2}). 
In particular, we have 
$$
\varphi_1 = \frac{\binom{n}{k}}{f_{(l+1|k-1)}(n)} \frac{S_{(l+1|k-1)}}{e_k}
\quad \textnormal{and} \quad 
\varphi_n = \frac{\binom{n}{k}}{f_{(l|k)}(n)} \frac{e_n S_{(l|k)}}{e_k}.
$$ 
\vskip 0.2 cm
\noindent{\bf {Staircase M\"untz spaces: }}
Let $\phi = (\phi_1,\phi_2,...\phi_n)$ be a Chebyshev function of order $n$ 
on an non-empty interval $I$ and denote by $\varphi$ its Chebyshev blossom. Let 
$\theta : \tilde{I} \longrightarrow I$ be a $C^{\infty}$ strictly monotonic 
function. Then, the function 
\begin{equation}\label{reparametrizationfunction}
\tilde{\phi} = \phi \circ (\theta,\theta,...,\theta)
\end{equation}
is a Chebyshev function of order $n$ on the interval $\tilde{I}$. 
Moreover, as the process of intersecting osculating flats is a geometrical
concept depending only on the curve itself and not on the chosen parametrization,
the Chebyshev blossom $\tilde{\varphi}$ of the function 
$\tilde{\phi}$ is given by
\begin{equation}\label{reparametrization}
\tilde{\varphi} = \varphi \circ (\theta,\theta,....,\theta).
\end{equation}
For similar reasons, the pseudo-affinity factor $\tilde{\alpha}$ of the
space $\mathcal{E}(\tilde{\phi})$ is related to the pseudo-affinity factor $\alpha$
of the space $\mathcal{E}(\phi)$ by 
\begin{equation}\label{reparametrizationalpha}
\tilde{\alpha}(u_1,...,u_{n-1};a,b,t) =  
\alpha(\theta(u_1),...,\theta(u_{n-1});\theta(a),\theta(b),\theta(t)).
\end{equation}
Finally, if we denote by $B^{n}_{k}$ and $\tilde{B}^{n}_{k}, k=0,...,n$
the Chebyshev-Bernstein basis of the spaces $\mathcal{E}(\phi)$ and 
$\mathcal{E}(\tilde{\phi})$ respectively, then we have
\begin{equation}\label{reparametrizationbernstein}
\tilde{B}^{n}_{k}(t) = B^{n}_{k}(\theta(t)), \quad k=0,...,n.
\end{equation}
Now, we will deal with the simplest case of a situation such 
(\ref{reparametrizationfunction}), namely, a reparametrization of 
the space of polynomials. Let $l$ be a non-negative
integer and consider the Chebyshev curve $\phi$ of order $n$ over 
the interval $]0,\infty[$ given by 
$$
\phi(t) = (t^{l+1},t^{2(l+1)},...,t^{k(l+1)},...,t^{n(l+1)})^{T}.
$$
The partition $\lambda$ associated with the function $\phi$ is given
by the so-called $l$-staircase partition 
\begin{equation}\label{staircase}
\lambda = \left(nl, (n-1)l, (n-2)l,...,l\right).
\end{equation}
The function $\phi$ will be called a $l$-staircase \muntz function,
while the associated Chebyshev space will be called a $l$-staircase 
\muntz space. The function $\phi$ can be rewritten as 
$$
\phi(t) = ((t^{(l+1)})^1,(t^{(l+1)})^2,...,(t^{(l+1)})^k,...,(t^{(l+1)})^n)^{T}.
$$
Therefore, the function $\phi$ is a reparametrization of the \muntz polynomial 
function (\ref{polymuntz}). Taking the Chebyshev blossom of $\phi$ using 
Theorem \ref{theoremblossom} in one hand and equation (\ref{reparametrization})
in the another hand, in which $\theta(t) = t^{l+1}$ in (\ref{reparametrizationfunction}),
lead to a set of power plethysms
\begin{equation}\label{power}
\frac{e_{k}(u_1^{l+1},u_2^{l+1},...,u_n^{l+1})}{\binom{n}{k}} = 
\frac{f_{\lambda^{(0)}}}{\lambda^{(k)}} \frac{S_{\lambda^{(k)}}(u_1,u_2,...,u_n)}
{S_{\lambda^{(0)}}(u_1,u_2,...,u_n)}, 
\end{equation}
where $(\lambda^{(0)},...,\lambda^{(n)})$ is the \muntz tableau associated with
the partition $\lambda$ in (\ref{staircase}). Note that (\ref{power}) is not 
a genuine power plethysm as we do not expand the quantity in the left hand 
of (\ref{power}) in the Schur basis. In this work, our interest in the 
staircase \muntz spaces is motivated by two facts. The first, is that 
as their pseudo-affinity factors as well as their Chebyshev-Bernstein bases 
are well known, they will play a role of reconfirming our theoretical results.
The second fact is that, in practice, these spaces will play a sort of short-cut
in finding explicit expressions of the Chebyshev-Bernstein basis for a generic \muntz space.
A property of $l$-staircase Young diagram that will be needed later is the following expression of their 
associate Schur functions, namely for the partition $\lambda$ given in (\ref{staircase}), we 
have
\begin{equation}\label{staircaseschur}
S_{\lambda}(u_1,u_2,...,u_{n},u_{n+1}) = 
\prod_{1 \leq i < j \leq n+1} h_{l}(u_i,u_j) =
\prod_{1 \leq i < j \leq n+1} \frac{u_i^{l+1}-u_j^{l+1}}{u_i - u_j}.
\end{equation}
\vskip 0.2 cm
\begin{remark}
The definitions of elementary, complete and hook \muntz spaces in our previous examples 
depend primarily on the convention that we have adopted in associating a \muntz space 
to a partition in (\ref{chebyshevconvention}). 
However, as it will be clear, once we give the expressions of the pseudo-affinity factors
and the Chebyshev-Bernstein bases of these spaces, that the adopted convention 
is the most natural one.     
\end{remark}

\begin{remark}
Theorem \ref{theoremblossom} it true even if $\lambda = (\lambda_1,\lambda_2,...,\lambda_n)$ 
is such that $\lambda_1 \geq \lambda_2 \geq ...\geq \lambda_n  \geq 0$ and $\lambda_i$ are real numbers.
In this case, the Schur function should be defined only as the ratio of determinants as in 
(\ref{schurdeterminant}) and in which we make use of the l'H\^{o}pital's rule when some or all of the arguments
coincident. In the case the $\lambda_i$ are positive rational numbers, we can, in principle,
write the associated Chebyshev function as a composition of the form 
(\ref{reparametrizationfunction}) and in which the Chebyshev function $\phi$ is 
associated with a true partition. For example, the Chebyshev 
function $\phi(t) = (t^{\frac{1}{6}}, t^{\frac{1}{2}},t^{\frac{2}{3}})^{T}$ 
on the interval $]0,\infty[$ can be written as 
$\phi(t) = (t^{\frac{1}{6}},(t^{\frac{1}{6}})^{3} (t^{\frac{1}{6}})^{4})^{T}$. Therefore, we can 
use the remarks in examples section related to the staircase \muntz spaces 
to compute the blossom of the function $\phi$.       
\end{remark}

\begin{remark}
In the proof of Theorem \ref{theoremblossom}, we have decided to not to keep track of 
the exact value of the constants that naturally appears within the proof.
The main reason for this decision is the fact that we can always use the
diagonal coincidence property of the Chebyshev blossom to compute the final normalizing factors.
However, if we had kept track of the constants, we would have proven a formula for the ratio
of the hook-lengths. The fact that complete \muntz spaces have polynomials blossom would have then allow 
us to find a new proof for the hook-length formula (\ref{hooklengthformulas}) for the hook Young 
diagrams and then using the Giambelli formula, we would have proven a determinant
expression for the hook-length formula.  
\end{remark}

In the following, we would like to draw attention that the \muntz tableau 
associated with a partition $\lambda$ appears naturally in the expansion of the Jacobi-Trudi 
determinant (\ref{jacobi-trudi}). More precisely, let 
$\left( \lambda^{(0)},...,\lambda^{(n)} \right)$ be the \muntz tableau 
associated with a partition $\lambda$ of length at most $n$.
Computing the Schur function 
$S_{\lambda^{(n)}}$ using the Jacobi-trudi determinant (\ref{jacobi-trudi}) by
expanding the determinant up the last column \cite{Macdonald}, we find 
\begin{equation}\label{localexpansion}
S_{\lambda^{(n)}} = (-1)^{n-1} S_{\lambda^{(0)}} h_{\lambda_1+n} + 
\sum_{i=2}^{n} (-1)^{n-i} h_{\lambda_{i} + (n+1-i)} S_{\lambda^{(i-1)}}.
\end{equation}
Dividing (\ref{localexpansion}) by $S_{\lambda^{(0)}}$ and normalizing using the hook 
length factors $f_{\lambda^{(i)}}(n)$, we arrive at
\begin{proposition}
Let $\lambda = (\lambda_1,...,\lambda_n)$ be a partition of length 
at most $n$. Let $\varphi = (\varphi_1,...,\varphi_n)^{T}$ be 
the blossom of the Chebyshev function associated with the 
partition $\lambda$. Then we have 
$$
h_{\lambda_1+n} = \sum_{j=1}^{n} (-1)^{(j+1)} 
\frac{f_{\lambda^{(j)}}(n)}{f_{\lambda^{(0)}}(n)} 
h_{\lambda_{j+1}+n-j} \varphi_j,
$$
where $(\lambda^{(0)},...,\lambda^{(n)})$ is the \muntz tableau 
associated with the partition $\lambda$ and $\lambda_{n+1} = 0$.  
\end{proposition}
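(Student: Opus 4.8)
The plan is to read off the identity directly from the Jacobi–Trudi expansion \eqref{localexpansion} together with the blossom formula of Theorem~\ref{theoremblossom}. First I would take as given the expansion
\[
S_{\lambda^{(n)}} = (-1)^{n-1} S_{\lambda^{(0)}} h_{\lambda_1+n} + \sum_{i=2}^{n} (-1)^{n-i} h_{\lambda_{i} + (n+1-i)} S_{\lambda^{(i-1)}},
\]
which is obtained by expanding the Jacobi–Trudi determinant \eqref{jacobi-trudi} for $S_{\lambda^{(n)}}$ along its last column, exactly as indicated just before the statement; the partitions $\lambda^{(i-1)}$ appearing as the minors are precisely the members of the \muntz tableau, by the very definition of that tableau. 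I would then solve this linear relation for $h_{\lambda_1+n}$, obtaining
\[
h_{\lambda_1+n} = (-1)^{n-1}\frac{S_{\lambda^{(n)}}}{S_{\lambda^{(0)}}} - \sum_{i=2}^{n} (-1)^{n-i}(-1)^{n-1} h_{\lambda_{i}+(n+1-i)} \frac{S_{\lambda^{(i-1)}}}{S_{\lambda^{(0)}}},
\]
valid as an identity of symmetric functions (equivalently, after evaluation at any $u_1,\dots,u_n$, using that $S_{\lambda^{(0)}}$ does not vanish identically since $l(\lambda^{(0)})\le n-1$).

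The second step is to substitute the blossom. By Theorem~\ref{theoremblossom}, for each $j=1,\dots,n$ we have
\[
\frac{S_{\lambda^{(j)}}(u_1,\dots,u_n)}{S_{\lambda^{(0)}}(u_1,\dots,u_n)} = \frac{f_{\lambda^{(j)}}(n)}{f_{\lambda^{(0)}}(n)}\,\varphi_j(u_1,\dots,u_n),
\]
so each ratio $S_{\lambda^{(i-1)}}/S_{\lambda^{(0)}}$ in the displayed relation (for $i=2,\dots,n$, i.e.\ $j=i-1=1,\dots,n-1$) and the term $S_{\lambda^{(n)}}/S_{\lambda^{(0)}}$ (i.e.\ $j=n$) become explicit multiples of $\varphi_j$. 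Re-indexing the sum by $j=i-1$, the leading term $j=n$ carries sign $(-1)^{n-1}$ times $(-1)^{n-1}=+1$ after the bookkeeping, and a term with $j\in\{1,\dots,n-1\}$ picks up the sign $-(-1)^{n-i}(-1)^{n-1}=-(-1)^{2n-1-j-1}=(-1)^{j+1}$ together with the coefficient $h_{\lambda_{j+1}+(n-j)}$, where one notes $\lambda_{n+1}=0$ handles the $j=n$ term uniformly. Collecting everything gives
\[
h_{\lambda_1+n} = \sum_{j=1}^{n} (-1)^{j+1}\,\frac{f_{\lambda^{(j)}}(n)}{f_{\lambda^{(0)}}(n)}\,h_{\lambda_{j+1}+n-j}\,\varphi_j,
\]
which is the claimed formula.

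The only genuine point requiring care—the part I would slow down on—is the sign and index bookkeeping: matching $(-1)^{n-i}$ from \eqref{localexpansion} against the target exponent $(-1)^{j+1}$ under $j=i-1$, and checking that the isolated $S_{\lambda^{(n)}}$ term fits the same pattern once one invokes the convention $\lambda_{n+1}=0$ (so that $h_{\lambda_{n+1}+n-n}=h_0=1$ and $f_{\lambda^{(n)}}(n)/f_{\lambda^{(0)}}(n)$ is the right normalizer, consistent with $S_{\lambda^{(n)}}=u_1\cdots u_n\,S_{(\lambda)}$ via \eqref{schurplusones}). Everything else is a direct substitution; no new combinatorial input beyond \eqref{localexpansion}, Theorem~\ref{theoremblossom}, and the nonvanishing of $S_{\lambda^{(0)}}$ is needed.
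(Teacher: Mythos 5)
Your proposal is correct and follows exactly the paper's own route: the paper derives the identity by expanding the Jacobi--Trudi determinant for $S_{\lambda^{(n)}}$ along the last column to get (\ref{localexpansion}), then divides by $S_{\lambda^{(0)}}$ and normalizes via Theorem \ref{theoremblossom}, which is precisely your two steps. Your sign and index bookkeeping (including the uniform treatment of the $j=n$ term via $\lambda_{n+1}=0$ and $h_0=1$) checks out.
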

\section{The pseudo-affinity factor}
For a given partition $\lambda$ of length at most $n$,
we give an expression of the pseudo-affinity factor associated
with the \muntz space $\mathcal{E}_{\lambda}(n)$ in terms of 
Schur functions. The following so-called Dodgson condensation 
formula \cite{Kra} will be crucial to this end.  
\begin{proposition}\label{condensation}
Let $A$ be an $(n,n)$ matrix. Denote the submatrix of $A$ in which rows
$i_1, i_2,...,i_k$ and columns $j_1, j_2,...,j_k$ are omitted by 
$A_{i_1,i_2,...,i_k}^{j_1,j_2,...,j_k}$. Then we have
\begin{equation}\label{condensationformulas}
det(A) det(A_{1,n}^{1,n}) = det(A_{1}^{1}) det(A_{n}^{n}) - det(A_{1}^{n}) det(A_{n}^{1}).
\end{equation}
\end{proposition}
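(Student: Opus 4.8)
The plan is to obtain (\ref{condensationformulas}) as the $2\times 2$ corner instance of Jacobi's theorem on the minors of the inverse matrix, so that the real content reduces to a short sign computation. First I would observe that both sides of (\ref{condensationformulas}) are polynomials in the $n^{2}$ entries of $A$ and that the invertible matrices are dense; hence it suffices to prove the identity under the extra hypothesis $\det(A)\neq 0$ and then pass to the general case by continuity (equivalently, replace $A$ by $A+\varepsilon I$ and let $\varepsilon\to 0$). So from now on assume $A$ is invertible.

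Next I would compute in two ways the determinant of the $2\times 2$ submatrix of $A^{-1}$ formed by rows $1,n$ and columns $1,n$. On one hand, Jacobi's identity for minors of the inverse gives that this determinant equals $(-1)^{(1+n)+(1+n)}\,\det(A_{1,n}^{1,n})/\det(A)$; the sign is $+1$ and the complementary index set $\{2,\dots,n-1\}$ produces exactly the central minor $A_{1,n}^{1,n}$ of the paper's notation. On the other hand, expanding the $2\times 2$ determinant and substituting the cofactor expressions $(A^{-1})_{11}=\det(A_{1}^{1})/\det(A)$, $(A^{-1})_{nn}=\det(A_{n}^{n})/\det(A)$, $(A^{-1})_{1n}=(-1)^{1+n}\det(A_{n}^{1})/\det(A)$ and $(A^{-1})_{n1}=(-1)^{n+1}\det(A_{1}^{n})/\det(A)$, the two off-diagonal signs multiply to $(-1)^{2n+2}=+1$, so the determinant equals $\bigl(\det(A_{1}^{1})\det(A_{n}^{n})-\det(A_{1}^{n})\det(A_{n}^{1})\bigr)/\det(A)^{2}$. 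Equating the two expressions and clearing the factor $\det(A)^{2}$ yields (\ref{condensationformulas}) verbatim.

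The only delicate point is bookkeeping: one must line up the paper's row/column-deletion notation with the standard statement of Jacobi's identity and check that every sign that arises collapses to $+1$ --- which, happily for the corner minor, it does. If one prefers to avoid quoting Jacobi's theorem, the required case can instead be established directly by applying the two-row Laplace expansion to $\det(A)$ and to $\det(A_{1,n}^{1,n})$ and comparing; this is the fallback I would use, at the cost of a longer but entirely routine determinant manipulation of the kind collected in \cite{Kra}. Note that nothing about Schur functions or \muntz spaces is needed here: (\ref{condensationformulas}) is a purely linear-algebraic identity, to be applied in what follows to matrices whose bordered minors are again Schur functions.
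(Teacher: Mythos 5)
Your proof is correct. Note, however, that the paper itself offers no proof of this proposition at all: it is stated as a known result (the Dodgson condensation, or Desnanot--Jacobi, identity) with a citation to \cite{Kra}, and the text immediately moves on to its applications. So there is no "paper proof" to compare against; what you have supplied is a self-contained justification that the authors chose to outsource. Your route --- reducing the identity to the $2\times 2$ corner case of Jacobi's theorem on minors of the inverse, checking that the cofactor signs $(-1)^{1+n}$ and $(-1)^{n+1}$ cancel, and then removing the invertibility hypothesis by density (or, equivalently, by treating the entries as indeterminates over the fraction field of the polynomial ring) --- is one of the standard proofs and is complete as written. The bookkeeping you flag is indeed the only delicate point, and you have done it correctly: the paper's $A^{j}_{i}$ deletes row $i$ and column $j$, so $(A^{-1})_{1n}$ involves $\det(A^{1}_{n})$ and $(A^{-1})_{n1}$ involves $\det(A^{n}_{1})$, and the product of off-diagonal signs is $(-1)^{2n+2}=+1$, giving the identity exactly in the form (\ref{condensationformulas}). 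Your fallback via two-row Laplace expansion would also work. The only thing I would add is that for the paper's purposes the identity is applied to matrices of monomials $x_i^{\lambda_j+(n+1)-j}$, which are generically nonsingular, so even the density step is not strictly needed in the application.
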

From the last proposition, we can prove the following 
\begin{proposition}\label{condensationproposition}
Let $\lambda = (\lambda_1,\lambda_2,...,\lambda_n)$ be a partition of length at 
most $n$. Then, for any sequence of real numbers $U = (u_1,u_2,...,u_{n-1})$ 
and real numbers $x,y$, we have 
$$
(x-y) S_{\lambda}(U,x,y) S_{\lambda^{(0)}}(U)  =  
x  S_{\lambda}(U,x) S_{\lambda^{(0)}}(U,y) - y  S_{\lambda}(U,y) S_{\lambda^{(0)}}(U,x),
$$
where $\lambda^{(0)}$ is the bottom partition of $\lambda$.
\end{proposition}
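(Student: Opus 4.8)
The plan is to realize all six Schur functions occurring in Proposition~\ref{condensationproposition} as normalized minors of a single $(n+1)\times(n+1)$ alternant, and then to read the identity off from the Dodgson condensation formula of Proposition~\ref{condensation}.

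First I would set $(z_1,\dots,z_{n+1}):=(u_1,\dots,u_{n-1},x,y)$ and form the matrix $A=\bigl(z_i^{e_j}\bigr)_{1\le i,j\le n+1}$ whose column exponents are the ones appearing in the determinantal definition~(\ref{schurdeterminant}) of $S_\lambda$ in $n+1$ variables, namely $e_j=\lambda_j+n+1-j$ for $1\le j\le n$ and $e_{n+1}=0$ (with the convention $\lambda_{n+1}=0$). Since $\lambda$ is a partition one has $e_1>e_2>\dots>e_n\ge 1>e_{n+1}=0$, so $A$ is a genuine generalized Vandermonde matrix and $\det A=V(U,x,y)\,S_\lambda(U,x,y)$, where $U=(u_1,\dots,u_{n-1})$ and $V$ denotes the ordinary Vandermonde determinant.

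Next I would apply Proposition~\ref{condensation} to $A$ with its rows permuted so that the two carrying $x$ and $y$ are brought to the first and last positions (equivalently, the generalized Desnanot--Jacobi identity with the two omitted rows equal to $n,n+1$ and the two omitted columns equal to $1,n+1$; the sign changes produced by the reordering cancel). This gives
$$\det(A)\,\det(A^{1,n+1}_{n,n+1})=\det(A^{1}_{n})\det(A^{n+1}_{n+1})-\det(A^{n+1}_{n})\det(A^{1}_{n+1}).$$
The core of the argument is then the identification of each minor. Deleting rows $n,n+1$ and columns $1,n+1$ leaves the alternant in $u_1,\dots,u_{n-1}$ with the strictly positive exponents $\lambda_2+(n-1),\dots,\lambda_n+1$; pulling one factor $u_i$ out of row $i$ and lowering every exponent by $1$ produces exactly the alternant exponents of $S_{\lambda^{(0)}}$ in $n-1$ variables, so that $\det(A^{1,n+1}_{n,n+1})=\bigl(\prod_{i=1}^{n-1}u_i\bigr)V(U)\,S_{\lambda^{(0)}}(U)$. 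In the same way $\det(A^{1}_{n})=V(U,y)\,S_{\lambda^{(0)}}(U,y)$ and $\det(A^{1}_{n+1})=V(U,x)\,S_{\lambda^{(0)}}(U,x)$ — here the surviving last column has exponent $0$, matching $\lambda^{(0)}$ padded to $n$ parts — whereas $\det(A^{n+1}_{n+1})=x\bigl(\prod_{i=1}^{n-1}u_i\bigr)V(U,x)\,S_\lambda(U,x)$ and $\det(A^{n+1}_{n})=y\bigl(\prod_{i=1}^{n-1}u_i\bigr)V(U,y)\,S_\lambda(U,y)$, where now all surviving exponents are $\ge 1$ and, after the variables are pulled out, they become $\lambda_j+(n-j)$, the alternant exponents of $S_\lambda$ in $n$ variables.

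Finally I would substitute these six evaluations into the condensation identity, use the elementary factorization $V(U,x,y)\,V(U)=(x-y)\,V(U,x)\,V(U,y)$, and cancel the common nonzero factor $\bigl(\prod_{i=1}^{n-1}u_i\bigr)V(U,x)\,V(U,y)$; what remains is precisely the asserted relation. This proves it whenever the $u_i$ are pairwise distinct and nonzero and $x,y$ are distinct from each other and from the $u_i$; since both sides of the identity in Proposition~\ref{condensationproposition} are polynomials in $u_1,\dots,u_{n-1},x,y$, the identity then holds for all real values. The one genuinely delicate point is the exponent bookkeeping — keeping the two incarnations of $S_{\lambda^{(0)}}$ (in $n-1$ and in $n$ variables) apart, and checking that pulling $\prod u_i$ (and, where appropriate, a further $x$ or $y$) out of the all-positive columns reproduces exactly the alternant patterns of $S_\lambda$ and of $S_{\lambda^{(0)}}$; everything else is routine.
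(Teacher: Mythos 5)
Your proposal is correct and follows essentially the same route as the paper's own proof: both realize the six Schur functions as normalized minors of the $(n+1)\times(n+1)$ generalized Vandermonde matrix with exponents $\lambda_j+(n+1)-j$ and read the identity off from the Dodgson condensation formula of Proposition~\ref{condensation}, the only difference being your placement of the $x$ and $y$ rows (and your slightly more explicit treatment of the sign bookkeeping and of the extension from generic to arbitrary real arguments). No gaps.
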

\begin{proof}
Without loss of generality, we can assume that all variables 
$u_i, i=1,...,n-1; x$ and $y$ are pairewise distinct.
Let us denote by $V_U$ the Vandermonde factor 
$$
V_U = \prod_{1\leq i<j \leq n-1} (u_i - u_j).
$$ 
Now, let us apply Proposition \ref{condensation} to the $(n+1,n+1)$ matrix $A$ defined as     
\begin{equation}\label{matrixA}
A = (a_{ij})_{1\leq i,j\leq n+1} = x_{i}^{\lambda_j+(n+1)-j}
\end{equation}
where $x_1 = x$, $x_i = u_{i-1}$ for $i=2,...,n, x_{n+1} = y$ 
and $\lambda_{n+1}=0$.
The following determinant formulas can be readily checked 
$$
det(A) = (x-y) V_U S_{\lambda}(U,x,y)  \prod_{i=1}^{n-1}(x-u_i)(u_i-y); \quad 
det(A_{1,n}^{1,n}) = V_U S_{\mu}(U)  \prod_{i=1}^{n-1}u_i 
$$  
$$
det(A_{1}^{1}) = V_U S_{\mu}(U,y) \prod_{i=1}^{n-1} (u_i-y)   ; \quad  
det(A_{n}^{n}) = x V_U S_{\lambda}(U,x) \prod_{i=1}^{n-1}u_i (x-u_i)   
$$
$$
det(A_{1}^{n}) =  y V_U S_{\lambda}(U,y) \prod_{i=1}^{n-1}u_i (u_i-y); \quad  
det(A_{n}^{1}) = V_U S_{\mu}(U,x) \prod_{i=1}^{n-1} (x-u_i) 
$$
Upon applying (\ref{condensationformulas}), the claim of the proposition 
follows.
\end{proof}

At this point, we can give a Schur function representation 
of the pseudo-affinity factor as follows 

\begin{theorem}\label{pseudotheorem}
The pseudo-affinity factor of the Chebyshev space $\mathcal{E}_{\lambda}(n)$
associated with a partition $\lambda$ of length at most $n$ is given by 
$$
\alpha(U;a,b,t) = ( \frac{t-a}{b-a} ) \frac{S_{\lambda}(U,a,t) S_{\lambda^{(0)}}(U,b)}
{S_{\lambda}(U,a,b) S_{\lambda^{(0)}}(U,t)}, 
$$
and 
$$
\beta(U;a,b,t) = 1 - \alpha(U,a,b,t) = ( \frac{b-t}{b-a} ) 
\frac{S_{\lambda}(U,b,t) S_{\lambda^{(0)}}(U,a)}
{S_{\lambda}(U,a,b) S_{\lambda^{(0)}}(U,t)}, 
$$
where $U$ is a sequence of positive real numbers $U = (u_1,...,u_{n-1})$ and 
$\lambda^{(0)}$ is the bottom partition of $\lambda$.
\end{theorem}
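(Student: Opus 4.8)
The plan is to derive the formula directly from the pseudo-affinity property~(\ref{pseudoaffinity}) combined with the explicit blossom formula of Theorem~\ref{theoremblossom} and the Dodgson-type identity of Proposition~\ref{condensationproposition}. Recall that by definition the pseudo-affinity factor $\alpha(U;a,b,t)$ is the unique scalar such that
$$
\varphi(U,t) = \bigl(1-\alpha(U;a,b,t)\bigr)\,\varphi(U,a) + \alpha(U;a,b,t)\,\varphi(U,b),
$$
where $U = (u_1,\dots,u_{n-1})$. Since the three points $\varphi(U,t)$, $\varphi(U,a)$, $\varphi(U,b)$ are collinear (they all lie on the affine line $L$ of~(\ref{pseudoaffinity})), it suffices to check the affine-combination identity on a single well-chosen coordinate $\varphi_i$ for which the denominator of Theorem~\ref{theoremblossom} is controlled; the last coordinate $\varphi_n$, or equivalently the coordinate whose numerator is $S_{\lambda^{(i)}}$ with $\lambda^{(i)}$ the full partition $\lambda$ augmented, is the most convenient. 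In fact, the cleanest route is to isolate $\alpha$ algebraically: from the collinearity we get
$$
\alpha(U;a,b,t) = \frac{\varphi_i(U,t) - \varphi_i(U,a)}{\varphi_i(U,b) - \varphi_i(U,a)}
$$
for any coordinate $i$ with $\varphi_i(U,b)\neq\varphi_i(U,a)$, and then substitute the Schur-function expressions.

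First I would substitute Theorem~\ref{theoremblossom} into this quotient. The hook-length normalizing constants $f_{\lambda^{(0)}}(n)$ and $f_{\lambda^{(i)}}(n)$ are independent of the arguments $u_1,\dots,u_{n-1},a,b,t$, so they cancel in both the numerator difference and the denominator difference, leaving a ratio of expressions of the form $S_{\lambda^{(i)}}(U,\star)/S_{\lambda^{(0)}}(U,\star)$. Clearing the denominators $S_{\lambda^{(0)}}(U,a)$, $S_{\lambda^{(0)}}(U,b)$, $S_{\lambda^{(0)}}(U,t)$, the claim reduces to a polynomial identity among Schur functions evaluated at the $n+1$ points $U,a,b$ in various combinations. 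This is exactly the shape of the Dodgson condensation identity of Proposition~\ref{condensationproposition} with $(x,y)$ specialized to $(a,t)$ or to $(b,t)$. Concretely, I would apply Proposition~\ref{condensationproposition} twice — once with $(x,y)=(a,t)$ and once with $(x,y)=(b,t)$ — to rewrite $S_{\lambda}(U,a,t)$ and $S_{\lambda}(U,b,t)$ in terms of products $S_\lambda(U,\star)\,S_{\lambda^{(0)}}(U,\star)$; subtracting these two instances, or forming the appropriate linear combination, produces precisely $\varphi_n(U,t) - \varphi_n(U,a)$ and $\varphi_n(U,b) - \varphi_n(U,a)$ up to the common factors $(b-a)$, $S_{\lambda}(U,a,b)$, $S_{\lambda^{(0)}}(U,t)$, and the telescoping yields the stated closed forms for $\alpha$ and $\beta$.

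To nail down the second displayed formula, I would simply verify $\alpha + \beta = 1$ using one more application of Proposition~\ref{condensationproposition}: the identity
$$
(b-a)\,S_{\lambda}(U,a,b)\,S_{\lambda^{(0)}}(U,t) = b\,S_{\lambda}(U,b,t)\,S_{\lambda^{(0)}}(U,a)\cdot(\text{sign}) - \dots
$$
is itself an instance of Proposition~\ref{condensationproposition} (after cyclically relabelling which of the three special points plays the role of $x$, $y$, and the ``$t$'' inside $U$), so $1-\alpha$ collapses to the $\beta$ expression with no further work. Finally, I would check the boundary normalization $\alpha(U;a,b,a)=0$ and $\alpha(U;a,b,b)=1$: the first is immediate from the factor $\frac{t-a}{b-a}$, and the second follows because at $t=b$ the Schur ratios $\frac{S_\lambda(U,a,b)}{S_\lambda(U,a,b)}$ and $\frac{S_{\lambda^{(0)}}(U,b)}{S_{\lambda^{(0)}}(U,b)}$ are each $1$, consistent with $\alpha(b)=1$. (The paper's remark after~(\ref{pseudoaffinity}) asserts $\alpha(a)=0$ and $\alpha(b)=0$, but that is a typo for $\alpha(b)=1$; the formula makes this evident.)

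\textbf{Main obstacle.} The delicate point is the bookkeeping in the matrix $A$ of~(\ref{matrixA}): Proposition~\ref{condensationproposition} was proved with the $(n{+}1)\times(n{+}1)$ Vandermonde-type matrix whose rows are indexed by $(x,u_1,\dots,u_{n-1},y)$ \emph{in that order}, so the six cofactor determinants come out with specific products $\prod(x-u_i)$, $\prod(u_i-y)$, $\prod u_i$ attached. To get both the $\alpha$ and the $\beta$ formula I need three \emph{different} placements of the triple $\{a,b,t\}$ into the roles $\{x,\text{interior},y\}$, and each placement changes which Vandermonde sub-products appear and with which signs. The risk is a sign or a stray factor of $a$, $b$, or $t$ surviving the cancellation. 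I would manage this by working the whole argument with all variables pairwise distinct (as in the proof of Proposition~\ref{condensationproposition}), tracking the Vandermonde factors symbolically, verifying that every extraneous product cancels between numerator and denominator, and only at the end invoking continuity (the smoothness of the osculating-flat construction, already used in the proof of Theorem~\ref{theoremblossom}) to extend to coincident arguments. Everything else is routine determinant manipulation.
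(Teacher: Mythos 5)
Your proposal is correct and follows essentially the same route as the paper: isolate $\alpha$ as the quotient $\bigl(\varphi_n(U,t)-\varphi_n(U,a)\bigr)/\bigl(\varphi_n(U,b)-\varphi_n(U,a)\bigr)$ using the last blossom coordinate from Theorem~\ref{theoremblossom}, then collapse numerator and denominator via Proposition~\ref{condensationproposition} with $(x,y)=(a,t)$ and $(x,y)=(a,b)$ respectively (and $(b,t)$ for $\beta$). Your worry about re-deriving the cofactor bookkeeping for different placements of $a,b,t$ is unnecessary, since Proposition~\ref{condensationproposition} is already stated symmetrically in arbitrary $x,y$ with $U$ fixed in the interior, so it applies verbatim to all three pairs.
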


\begin{proof}
Let $\phi$ the \muntz function associated with the partition $\lambda$,
and $\varphi = (\varphi_1,...,\varphi_n)^{T}$ its Chebyshev blossom.
As the pseudo-affinity factor is independent of which $\mathcal{E}_{\lambda}(n)$ function
we choose, we can work with the last component $\varphi_n$ of the blossom 
\begin{equation}\label{phin}
\varphi_n(u_1,...,u_n) =  \frac{f_{\lambda^{(0)}}(n)}{f_{\lambda^{(n)}}(n)} 
\frac{ S_{\lambda}(u_1,...,u_n) \prod_{i=1}^n u_i}{S_{\lambda^{(0)}}(u_1,...,u_n)}.
\end{equation}
By equation (\ref{pseudoaffinity}) and if we denote $U = (u_1,u_2,...,u_{n-1})$, we have 
$$
\alpha(U;a,b,t) = \frac{\varphi_{n}(U,t)-\varphi_{n}(U,a)} 
{\varphi_{n}(U,b)-\varphi_{n}(U,a)}.
$$
Inserting (\ref{phin}) into  the last equation, leads to 
\begin{equation}\label{localpseudo}
\alpha(U;a,b,t) = \frac{t S_{\lambda}(U,t) S_{\lambda^{(0)}}(U,a) - 
a S_{\lambda}(U,a) S_{\lambda^{(0)}}(U,t)}{b S_{\lambda}(U,b) S_{\lambda^{(0)}}(U,a) - 
a S_{\lambda}(U,a) S_{\lambda^{(0)}}(U,b)} 
\frac{S_{\lambda^{(0)}}(U,b)}{S_{\lambda^{(0)}}(U,t)}.
\end{equation}
Applying Proposition \ref{condensationproposition} with $x=a$ and $y=t$ to 
(\ref{localpseudo}) leads to the desired expression for the 
pseudo-affinity factor. Similar treatment with $1-\alpha$ leads to the second 
equation of the proposition.
\end{proof}

The pseudo-affinity factor of the \muntz spaces defined in the examples section 
can be derived for the last proposition. For the \muntz polynomial space 
$\mathcal{E}_{\emptyset}(n)$, the partition $\lambda$ and it 
bottom partition $\lambda^{(0)}$ are empty and therefore by Theorem \ref{pseudotheorem},  
the pseudo-affinity factor is given by 
$$
\alpha(u_1,...,u_{n-1};a,b,t) = \frac{t-a}{b-a}.
$$
Similarly, the pseudo-affinity factor of the $k$th elementary
\muntz space $\mathcal{E}_{(1^k)}(n)$ is given by 
$$
\alpha(u_1,...,u_{n-1};a,b,t) = \frac{t-a}{b-a} 
\frac{e_{k}(u_1,...,u_{n-1},a,t) e_{k-1}(u_1,...,u_{n-1},b)}
{e_{k}(u_1,...,u_{n-1},a,b) e_{k-1}(u_1,...,u_{n-1},t)}.
$$
For the $k$th complete \muntz space $\mathcal{E}_{(k)}(n)$, we have 
$$
\alpha(u_1,...,u_{n-1};a,b,t) = \frac{t-a}{b-a} 
\frac{h_{k}(u_1,...,u_{n-1},a,t)}
{h_{k}(u_1,...,u_{n-1},a,b)}.
$$
For the $(l,k)$-hook \muntz space $\mathcal{E}_{(l|k)}(n)$, we have
$$
\alpha(u_1,...,u_{n-1};a,b,t) = \frac{t-a}{b-a} 
\frac{S_{(l|k)}(u_1,...,u_{n-1},a,t) e_{k}(u_1,...,u_{n-1},b)}
{S_{(l|k)}(u_1,...,u_{n-1},a,b) e_{k}(u_1,...,u_{n-1},t)}.
$$
Consider now the pseudo-affinity factor of the $l$-staircase \muntz space
associated with the partition $\lambda$ in (\ref{staircase}). 
Using the fact that 
\begin{equation}\label{staircaseformulas}
\begin{split}
S_{\lambda}(u_1,...,u_{n-1},u_{n},u_{n+1}) & = \hskip -0.6 cm
\prod_{1 \leq i < j \leq n+1} \hskip -0.6 cm h_{l}(u_i,u_j), \\
 S_{\lambda^{(0)}}(u_1,...,u_{n-1},u_{n}) & =  \hskip -0.4 cm
\prod_{1 \leq i < j \leq n}  \hskip -0.3 cm
h_{l}(u_i,u_j), 
\end{split}
\end{equation}
and carrying out all the simplifications that appear in the computation
of the pseudo-affinity factor, we find  
$$
\alpha(u_1,...,u_{n-1};a,b,t) = \frac{t-a}{b-a} 
\frac{h_{l}(a,t)}{h_{l}(a,b)} = \frac{t^{l+1} - a^{l+1}}{b^{l+1} - a^{l+1}},
$$
which is what is expected from the relation (\ref{reparametrizationalpha}).
\vskip 0.2 cm
For later use, we will need the equivalent of Proposition \ref{condensationproposition}, 
for every partition $\lambda^{(k)}$ in the \muntz tableau of the partition $\lambda$.

\begin{proposition}\label{condensationmuntztableau}
Let $\lambda$ be a partition of length at most $n$ and let
$(\lambda^{(0)},\lambda^{(1)},...,\lambda^{(n)})$ its \muntz tableau.
Then, for any real numbers $U=(u_1,...,u_{n-1})$, real numbers $x$ and $y$,
and $k=1,...,n-1$, we have
$$
S_{\lambda^{(0)}}(U,x) S_{{\lambda}^{(k)}}(U,y) - 
S_{\lambda^{(0)}}(U,y) S_{{\lambda}^{(k)}}(U,x) =
(y-x) S_{\eta}(U) S_{\lambda}(U,x,y),
$$
where $\eta$ is the bottom partition of $\lambda^{(k)}$ i.e., $\eta$ is the partition 
$\eta = (\lambda_{2}+1,\lambda_{3}+1,...,\lambda_{k}+1,\lambda_{k+2},...,\lambda_{n}).$
\end{proposition}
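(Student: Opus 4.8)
The plan is to mimic the proof of Proposition~\ref{condensationproposition}, which is the special case $k=n$, but now applied to a carefully chosen $(n+1)\times(n+1)$ matrix whose principal $2\times 2$ minors realize the four Schur functions $S_{\lambda^{(0)}}(U,x)$, $S_{\lambda^{(0)}}(U,y)$, $S_{\lambda^{(k)}}(U,x)$, $S_{\lambda^{(k)}}(U,y)$, while the full determinant and the central minor produce $S_{\lambda}(U,x,y)$ and $S_{\eta}(U)$. As in Proposition~\ref{condensationproposition}, I would first reduce to the generic case in which $u_1,\dots,u_{n-1},x,y$ are pairwise distinct; the identity is polynomial in all variables, so this costs nothing. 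Then I would write everything as ratios of alternants divided by Vandermonde factors and clear denominators, so that the final identity to be proved is an identity among honest determinants to which Proposition~\ref{condensation} (Dodgson condensation) applies directly.

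The key choice is the matrix. Recalling the definition of $\lambda^{(k)}$, which is obtained from $\lambda$ by adding a box to the first $k$ rows, deleting row $k+1$, and keeping the rest, the natural candidate is the $(n+1)\times(n+1)$ matrix $A=(x_i^{\,\mu_j})$ whose node set $x_1=x$, $x_i=u_{i-1}$ ($i=2,\dots,n$), $x_{n+1}=y$, and whose exponent vector $(\mu_1,\dots,\mu_{n+1})$ is chosen so that: deleting the first row and first column leaves the alternant producing $S_{\lambda^{(0)}}$ on the $u$'s (times a $u$-monomial coming from the shift), deleting the last row and last column leaves the alternant producing $S_{\lambda^{(k)}}$, and the full matrix has the $\lambda$-exponents $(\lambda_1+1,\lambda_2+1,\dots,\lambda_n+1,0)$-type profile that yields $S_\lambda(U,x,y)$ (after factoring out $(x-y)$ and the cross terms $\prod(x-u_i)(u_i-y)$). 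Concretely the exponents should be the ones read off from the Jacobi--Trudi/bialternant descriptions of $\lambda^{(k)}$ and $\lambda^{(0)}$: exponent $\lambda_1+n$ in the top slot, then the ``deleted row $k+1$'' feature means the central minor $A^{1,n+1}_{1,n+1}$ has exponent profile exactly that of $\eta=(\lambda_2+1,\dots,\lambda_k+1,\lambda_{k+2},\dots,\lambda_n)$ on the variables $U$, which is precisely the bottom partition of $\lambda^{(k)}$ as claimed.

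Once the matrix is fixed, I would verify — by the same bialternant bookkeeping used in Proposition~\ref{condensationproposition} — the six determinant evaluations:
\begin{equation*}
\det(A)=(x-y)\,V_U\,S_\lambda(U,x,y)\!\!\prod_{i=1}^{n-1}\!(x-u_i)(u_i-y),\quad
\det(A^{1,n+1}_{1,n+1})=V_U\,S_\eta(U)\prod_{i=1}^{n-1}(\cdots),
\end{equation*}
together with $\det(A^1_1)$, $\det(A^{n+1}_{n+1})$ proportional to $S_{\lambda^{(0)}}(U,y)$, $S_{\lambda^{(k)}}(U,x)$ and $\det(A^{n+1}_1)$, $\det(A^1_{n+1})$ proportional to $S_{\lambda^{(0)}}(U,x)$, $S_{\lambda^{(k)}}(U,y)$, with matching monomial prefactors in the $u_i$, $x$, $y$. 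Substituting into \eqref{condensationformulas} and cancelling the common factors $V_U^2$ and the product of linear terms in $x-u_i$, $u_i-y$, $x-y$ gives the stated identity, with the sign $(y-x)$ emerging from how $(x-y)$ in $\det(A)$ combines with the signs of the off-diagonal minors.

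\textbf{Main obstacle.} The delicate point is pinning down the exponent vector $(\mu_1,\dots,\mu_{n+1})$ and the associated monomial prefactors so that \emph{all six} minors simultaneously land on the right Schur functions with consistent normalizations — in particular making sure the central minor gives $S_\eta$ rather than some other bottom partition, since $\eta$ here is the bottom partition of $\lambda^{(k)}$ and not of $\lambda$. Getting the bookkeeping of the monomial factors $\prod u_i^{(\cdots)}$, $x^{(\cdots)}$, $y^{(\cdots)}$ exactly right (so they cancel cleanly in \eqref{condensationformulas}) is where the real care is needed; everything after that is the same mechanical application of Dodgson condensation as in Proposition~\ref{condensationproposition}.
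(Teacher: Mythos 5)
Your proposal is correct and is essentially the paper's own argument: the paper also reduces to pairwise distinct variables and applies the Dodgson condensation formula of Proposition~\ref{condensation} to an $(n+1)\times(n+1)$ alternant on the nodes $(x,u_1,\dots,u_{n-1},y)$, realizing your matrix concretely as the matrix $A$ of (\ref{matrixA}) with its first column moved to the last position and its $(k+1)$th column moved to the first position --- which is exactly the exponent vector your constraints force (first/last columns carrying $\lambda_1+n$ and $\lambda_{k+1}+n-k$, so that the corner minors give the $\lambda^{(0)}$ and $\lambda^{(k)}$ profiles and the central minor gives the $\eta$ profile). The bookkeeping you flag as the main obstacle in fact comes out clean here (no extra monomial prefactors arise, unlike in Proposition~\ref{condensationproposition}), so your plan goes through as stated.
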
 
\begin{proof} 
We can, without loss of generality, assume that all the variables $u_i, i=1,...,n-1$, 
$x$ and $y$ are pairewise distinct. Consider the $(n+1,n+1)$ matrix $A$
defined in (\ref{matrixA}). Now, construct a matrix $B_k$ by putting the first column 
of the matrix $A$ as the last column and putting the $(k+1)$th column of the matrix $A$ 
as the first column. The proof of the proposition is then derived by applying
the condenstation formula (\ref{condensationformulas}) to the matrix $B$.
\end{proof}

\begin{remark}
Note that Proposition \ref{condensationmuntztableau} can be used to reconfirm
the fact that the pseudo-affinity factor associated with a \muntz space 
$\mathcal{E}_{\lambda}(n)$ can be computed from (\ref{pseudoaffinity}) using any component 
of the Chebyshev blossom.
To show this fact, we can choose to work with the component $\varphi_{k}$ of the Chebyshev 
blossom and give the expression of the pseudo-affinity factor in a similar fashion as in 
the proof of Theorem \ref{pseudoaffinity} and in which this time we use 
the last Proposition instead of Proposition \ref{condensationproposition}.  
\end{remark} 
\section{The Chebyshev-Bernstein Basis}
The main objective of this section is to give an explicit 
expression in terms of Schur functions of the Chebyshev-Bernstein
basis of the space $\mathcal{E}_{\lambda}(n)$ associated with a partition 
$\lambda$ of length at most $n$. As the proof involve several 
technical steps, we will first give the main result and some of it
consequences. We will explain the methodology of the proof along 
the coming subsections.

\begin{theorem}\label{bernsteintheorem}
The Chebychev-Bernstein basis
$(B^{n}_{0,\lambda},B^{n}_{1,\lambda},...,B^{n}_{n,\lambda})$
of the \muntz space associated with a partition 
$\lambda = (\lambda_1,\lambda_2,...,\lambda_n)$ 
of length at most $n$ over an interval $[a,b]$ is given by 
\begin{equation}\label{bernstein}
B^{n}_{k,\lambda}(t) = \frac{f_{\lambda}(n+1)}{f_{\lambda^{(0)}}(n)}
B^{n}_{k}(t) 
\frac{S_{\lambda^{(0)}}(a^{n-k},b^{k}) t^{\lambda_1}S_{\lambda}(a^{n-k},b^{k},\frac{ab}{t})} 
{S_{\lambda}(a^{n+1-k},b^{k}) S_{\lambda}(a^{n-k}, b^{k+1})},
\end{equation}
where $B^{n}_{k}$ is the classical Bernstein basis of the polynomial space
over the interval $[a,b]$ and $\lambda^{(0)}$ is the bottom partition of $\lambda$.
\end{theorem}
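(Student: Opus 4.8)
The plan is to identify $B^{n}_{k,\lambda}$ through the uniqueness statement of Theorem \ref{mazuretheorem}. Write $\tilde{B}^{n}_{k,\lambda}$ for the right-hand side of (\ref{bernstein}); it then suffices to establish three things: (i) each $\tilde{B}^{n}_{k,\lambda}$ belongs to the \muntz space $\mathcal{E}_{\lambda}(n)$; (ii) $\tilde{B}^{n}_{k,\lambda}$ vanishes exactly $k$ times at $a$ and exactly $n-k$ times at $b$; and (iii) $\sum_{k=0}^{n}\tilde{B}^{n}_{k,\lambda}\equiv 1$. Linear independence of the $\tilde{B}^{n}_{k,\lambda}$ is automatic from the triangular vanishing orders in (ii), so they form a basis of $\mathcal{E}_{\lambda}(n)$, and Theorem \ref{mazuretheorem} then forces $B^{n}_{k,\lambda}=\tilde{B}^{n}_{k,\lambda}$.

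Property (ii) is the inexpensive part, granted (i). Applying the branching rule (\ref{branchingrule2}) in the last slot gives
\[
t^{\lambda_1}\,S_{\lambda}(a^{n-k},b^{k},\tfrac{ab}{t})=\sum_{j=0}^{\lambda_1}S_{\lambda/(j)}(a^{n-k},b^{k})\,(ab)^{j}\,t^{\lambda_1-j},
\]
so $\tilde{B}^{n}_{k,\lambda}$ is smooth on $]0,\infty[$ and its only zeros at $a$ and $b$ come from the factor $B^{n}_{k}(t)=\binom{n}{k}\alpha(t)^{k}\beta(t)^{n-k}$: at $t=a$ one has $S_{\lambda}(a^{n-k},b^{k},b)=S_{\lambda}(a^{n-k},b^{k+1})$ and at $t=b$ one has $S_{\lambda}(a^{n-k},b^{k},a)=S_{\lambda}(a^{n+1-k},b^{k})$, so after these cancellations the remaining Schur prefactor is a quotient of Schur functions at positive arguments, hence strictly positive. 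By (i), $\tilde{B}^{n}_{k,\lambda}$ is then a nonzero element of the $(n+1)$-dimensional extended Chebyshev space $\mathcal{E}_{\lambda}(n)$, so it has at most $n$ zeros, and the $k$ exhibited at $a$ together with the $n-k$ at $b$ account for all of them. The endpoint instance of (iii) is equally direct: using $S_{\mu}(x^{m})=f_{\mu}(m)\,x^{|\mu|}$ and $|\lambda^{(0)}|=|\lambda|-\lambda_1$ one finds $\tilde{B}^{n}_{0,\lambda}(a)=1$ and $\tilde{B}^{n}_{n,\lambda}(b)=1$.

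The real content is (i) and the global form of (iii), and here the two-step strategy of the section enters. First, de Casteljau: differentiating the blossom identity $\phi=\sum_k B^{n}_{k,\lambda}(t)\Pi_k$ and using $\phi'=\omega_1\cdot(1,L^{[a,t]}_{\omega_2},\dots,L^{[a,t]}_{\omega_2\cdots\omega_n})^{T}$, where the bracketed function is again a \muntz function, of order $n-1$, associated with the bottom partition $\lambda^{(0)}$, one expresses $(B^{n}_{k,\lambda})'$ --- up to the weight $\omega_1(t)=K_1\,t^{\lambda_1-\lambda_2}$ --- in terms of the Chebyshev--Bernstein functions of $\mathcal{E}_{\lambda^{(0)}}(n-1)$. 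This determines $B^{n}_{k,\lambda}$ only up to an additive constant, but it records exactly which derivative data the closed form (\ref{bernstein}) must reproduce and sets up an induction on the order $n$. Second, dimension elevation: for a sub-\muntz space $\mathcal{E}_{\mu}(n-1)\subset\mathcal{E}_{\lambda}(n)$ (obtained by deleting one power) relation (\ref{generalelevation}) yields $B^{n-1}_{k,\mu}=\xi_{k}B^{n}_{k,\lambda}+(1-\xi_{k+1})B^{n}_{k+1,\lambda}$, the coefficients $\xi_i\in\,]0,1[$ being read off from the Schur-function form of the blossom (Theorem \ref{theoremblossom}) and of the pseudo-affinity factor (Theorem \ref{pseudotheorem}). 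Anchoring the nested chain at the cases where the basis is already known --- the complete \muntz spaces $\mathcal{E}_{(k)}(n)$, in particular the polynomial space, where $\tilde{B}^{n}_{k,\emptyset}=B^{n}_{k}$ trivially, and the staircase \muntz spaces, whose Bernstein bases are furnished by (\ref{reparametrizationbernstein}) --- one propagates (\ref{bernstein}) to a general $\lambda$; since the elevation step carries $\mathcal{E}_{\mu}(n-1)$ into $\mathcal{E}_{\lambda}(n)$ and preserves normalization, (i) and the full (iii) emerge in the course of the induction.

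The main obstacle is the Schur-function bookkeeping that makes the dimension-elevation step close: one must check that substituting the inductively known closed form for $\mathcal{E}_{\mu}(n-1)$ into $B^{n-1}_{k,\mu}=\xi_{k}B^{n}_{k,\lambda}+(1-\xi_{k+1})B^{n}_{k+1,\lambda}$, with the explicit $\xi_i$ from Theorems \ref{theoremblossom} and \ref{pseudotheorem}, solves out to precisely (\ref{bernstein}) for $\mathcal{E}_{\lambda}(n)$, and in particular lands inside that space. Concretely this reduces to condensation-type identities (Propositions \ref{condensationproposition} and \ref{condensationmuntztableau}), the branching rules (\ref{branchingrule1})--(\ref{branchingrule2}), and the Jacobi--Trudi, Giambelli and hook-length relations of Section 4; arranging these so that the weight $t^{\lambda_1}$, the hook-length ratio $f_{\lambda}(n+1)/f_{\lambda^{(0)}}(n)$ and the boundary values $S_{\lambda^{(0)}}(a^{n-k},b^{k})$ all assemble into the exact combination displayed in (\ref{bernstein}) is the technical heart of the argument.
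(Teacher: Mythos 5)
Your architecture coincides in outline with the paper's: the formula is propagated down a chain of nested \muntz spaces anchored at the polynomial space, using the two-term relation that expresses the Bernstein basis of the smaller space in terms of that of the larger one, with coefficients obtained from the endpoint derivatives ${B^{n}_{k,\lambda}}^{(k)}(a)$ and ${B^{n}_{k,\lambda}}^{(n-k)}(b)$ supplied by the weighted de Casteljau path analysis (Theorem \ref{bernsteinderivative}); your preliminary checks (polynomiality via the branching rule, the vanishing orders at $a$ and $b$, the endpoint normalization) are correct and essentially reproduce the paper's discussion following the statement. The paper itself remarks that, once the formula is guessed, the characterization of Theorem \ref{mazuretheorem} reduces everything to showing membership in $\mathcal{E}_{\lambda}(n)$ --- which is the wrapper you adopt.

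The genuine gap sits exactly where you write ``the technical heart of the argument.'' For an \emph{arbitrary} dimension elevation partition $\eta$ of $\lambda$, substituting the inductively known closed forms of $B^{n+1}_{k,\eta}$ and $B^{n+1}_{k+1,\eta}$ into the two-term relation produces a sum of two products of Schur functions, and there is no reason for that sum to collapse back into the single-product shape of (\ref{bernstein}); the condensation identities you invoke (Propositions \ref{condensationproposition} and \ref{condensationmuntztableau}) relate a partition to its own bottom partition and do not apply to a generic pair $(\lambda,\eta)$. The paper's proof hinges on two specific choices your plan does not make: (a) the induction runs on the \emph{number of boxes} of $\lambda$, and the elevation partition is taken to be the \emph{border complement} $\mu$ of $\lambda$ (delete the first column), for which the combinatorial prefactor degenerates to $(n+1)/h_{\lambda}(1,1)$ (Lemma \ref{lemma2}); and (b) the collapse of the two terms is effected by a condensation identity tailored precisely to the pair consisting of $\lambda$ and its border complement, namely Proposition \ref{propositionfrommuntz} and its Corollary \ref{corollarymain}, $a(b-t)t^{\lambda_1-1}S_{\mu}(U,a,\frac{ab}{t})S_{\lambda}(U,b)+b(t-a)t^{\lambda_1-1}S_{\mu}(U,b,\frac{ab}{t})S_{\lambda}(U,a)=(b-a)t^{\lambda_1}S_{\mu}(U,a,b)S_{\lambda}(U,\frac{ab}{t})$, applied with $U=(a^{n-k},b^{k})$. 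Without identifying this particular descent and this particular identity, the inductive step does not close, so the proposal stops at the point where the actual proof begins.
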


To exhibit the fact that the Chebyshev-Bernstein basis $B^{n}_{k,\lambda}$ in (\ref{bernstein})  
is indeed a polynomial function in $t$, we could use the Branching rule
(\ref{branchingrule1}) as 
$$
t^{\lambda_1}S_{\lambda}\left( a^{n-k},b^{k},\frac{ab}{t} \right) = 
t^{\lambda_1} \sum_{\eta \prec \lambda} S_{\eta}(a^{n-k},b^{k})
\left( \frac{ab}{t} \right)^{|\lambda|-|\eta|},
$$
the sum is over are the interlacing partitions $\eta$ i.e., partition 
$\eta = (\eta_1,...,\eta_{n-1})$ such that
\begin{equation}\label{localeta}
\lambda_1 \geq \eta_1 \geq \lambda_2 \geq ...\eta_{n-1} \geq \lambda_n.
\end{equation}
Therefore, 
\begin{equation}
t^{\lambda_1}S_{\lambda}\left( a^{n-k},b^{k},\frac{ab}{t} \right) = 
\sum_{\eta \prec \lambda} S_{\eta}(a^{n-k},b^{k}) 
(ab)^{|\lambda|-|\eta|} t^{|\eta|-|\lambda^{(0)}|}, 
\end{equation}
where $\lambda^{(0)}$ is the bottom partition of $\lambda$.  
Any partition $\eta$ that satisfies (\ref{localeta}) also satisfies 
$|\eta|-|\lambda^{(0)}| \geq 0$. Therefore, for any $k=0,..,n$, 
the Chebyshev-Bernstein function $B^{n}_{k,\lambda}$ is a polynomial
function in $t$. We can also use the branching rule (\ref{branchingrule2})
as  
\begin{equation}\label{branche3}
\begin{split}
t^{\lambda_1}S_{\lambda}\left( a^{n-k},b^{k},\frac{ab}{t} \right) = &  \quad 
t^{\lambda_1} \sum_{j=0}^{\lambda1} S_{\lambda/(j)}(a^{n-k},b^{k})(\frac{ab}{t})^{j} \\
= &  \quad \sum_{j=0}^{\lambda1}(ab)^{j} S_{\lambda/(j)}(a^{n-k},b^{k})t^{\lambda_1-j}.
\end{split}
\end{equation}
The term $f_{\lambda}(n+1)/f_{\lambda^{(0)}}(n)$ in (\ref{bernstein}) can
be computed using the hook-length formula (\ref{hooklengthformulas}).
However, since we have a ratio of hook lengths of two related 
partitions, several simplifications will appear. In fact as the following
lemma shows, to compute this term, we need only 
to form the hook length and the content of the first row of 
the partition $\lambda$.
\begin{lemma}\label{hooklemma}
Let $\lambda = (\lambda_1,\lambda_2,...,\lambda_n)$ be a non-empty
partition of length at most $n$ and let $\lambda^{(0)}$ be its bottom partition.
Then, we have    
$$
\frac{f_{\lambda}(n+1)}{f_{\lambda^{(0)}}(n)} = 
\prod_{j=1}^{\lambda_1} \frac{(n+1) + c_{\lambda}(1,j)}{h_{\lambda}(1,j)}.
$$
\end{lemma}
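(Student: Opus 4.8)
The plan is to apply the hook-length formula (\ref{hooklengthformulas}) to both $f_{\lambda}(n+1)$ and $f_{\lambda^{(0)}}(n)$ and to observe that, after the appropriate shift of the integer parameter, every factor coming from a box of $\lambda^{(0)}$ cancels against a factor coming from a box of $\lambda$ in rows $2,3,\dots,\ell(\lambda)$. Since $\lambda^{(0)}=(\lambda_2,\dots,\lambda_n)$ is precisely $\lambda$ with its first row deleted, the map $(i,j)\mapsto(i+1,j)$ is a bijection from the boxes of $\lambda^{(0)}$ onto the boxes of $\lambda$ lying strictly below the first row. I would carry out the cancellation box by box under this bijection.

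First, the content comparison is immediate: $c_{\lambda^{(0)}}(i,j)=j-i=c_{\lambda}(i+1,j)+1$, so that $n+c_{\lambda^{(0)}}(i,j)=(n+1)+c_{\lambda}(i+1,j)$, which is exactly the shift needed to pass from the parameter $n$ to the parameter $n+1$. Second, and this is the only step requiring a little care, the hook-lengths agree: writing $h_{\mu}(i,j)=\mu_i+\mu'_j-i-j+1$, the arm-lengths obviously match (row $i$ of $\lambda^{(0)}$ and row $i+1$ of $\lambda$ both have length $\lambda_{i+1}$), while the leg-lengths match because for every column index $j$ with $1\le j\le\lambda_1$ one has $\lambda'_j=(\lambda^{(0)})'_j+1$: the first row of $\lambda$, being the longest, reaches every such column and contributes exactly one extra box. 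Hence $h_{\lambda^{(0)}}(i,j)=h_{\lambda}(i+1,j)$.

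Combining these two identities, the hook-length formula gives
$$
f_{\lambda^{(0)}}(n)=\prod_{(i,j)\in\lambda^{(0)}}\frac{n+c_{\lambda^{(0)}}(i,j)}{h_{\lambda^{(0)}}(i,j)}
=\prod_{\substack{(i,j)\in\lambda\\ i\ge 2}}\frac{(n+1)+c_{\lambda}(i,j)}{h_{\lambda}(i,j)},
$$
whereas
$$
f_{\lambda}(n+1)=\prod_{(i,j)\in\lambda}\frac{(n+1)+c_{\lambda}(i,j)}{h_{\lambda}(i,j)}
=\Bigl(\prod_{j=1}^{\lambda_1}\frac{(n+1)+c_{\lambda}(1,j)}{h_{\lambda}(1,j)}\Bigr)\,f_{\lambda^{(0)}}(n).
$$
Dividing the second display by $f_{\lambda^{(0)}}(n)$ yields the stated formula. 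There is no genuine obstacle here: the load-bearing observation is the conjugate identity $\lambda'_j=(\lambda^{(0)})'_j+1$ for $j\le\lambda_1$, which is exactly where the assumption that $\lambda_1$ is the first (largest) part is used, and one should also record that the degenerate case $\lambda^{(0)}=\emptyset$ is covered by the stated convention $f_{\emptyset}(n)=1$.
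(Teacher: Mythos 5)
Your proof is correct and follows essentially the same route as the paper's: both apply the hook-length formula to $f_{\lambda}(n+1)$ and $f_{\lambda^{(0)}}(n)$ and cancel box by box under the shift $(i,j)\mapsto(i+1,j)$, using $c_{\lambda^{(0)}}(i,j)=c_{\lambda}(i+1,j)+1$ and $h_{\lambda^{(0)}}(i,j)=h_{\lambda}(i+1,j)$. If anything, you are slightly more careful than the paper, since you actually justify the hook-length identity via $\lambda'_j=(\lambda^{(0)})'_j+1$ rather than asserting it.
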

\begin{proof}
If the partition $\lambda$ consist of a single part 
$\lambda = (\lambda_1,0,0,...,0)$, then the partition $\lambda^{(0)}$
is empty and the lemma is the statement 
of the hook length formulas (\ref{hooklengthformulas}). 
Let us assume that $\lambda = (\lambda_1,\lambda_2,...,\lambda_s,0,...,0)$
consists of more than a single part, i.e., $\lambda_1 \geq \lambda_2 \geq 1$.
The definition of a bottom partition imply that
for every non-empy box in the partition $\lambda^{(0)}$,
and $i \neq 1$, we have 
$$
h_{\lambda}\left(i,j\right) = h_{\lambda^{(0)}}\left(i-1,j\right) 
\quad \textnormal{and} \quad 
c_{\lambda} \left(i,j\right) = c_{\lambda^{(0)}}\left(i-1,j\right)- 1.
$$
Therefore, for any $i \neq 1$, we have 
$$
\frac{(n+1) + c_{\lambda}(i,j)}{h_{\lambda}(i,j)}  = 
\frac{n + c_{\lambda^{(0)}}(i-1,j)}{h_{\lambda^{(0)}}(i-1,j)}.
$$
Thereby, we have 
$$
\frac{f_{\lambda}(n+1)}{f_{\lambda^{(0)}}(n)} = 
\prod_{j=1}^{\lambda_1} \frac{(n+1) + c_{\lambda}(1,j)}{h_{\lambda}(1,j)}. 
$$ 
\end{proof}

\begin{example} 
Consider the partition $\lambda = (4,2,1)$. 
Then the content and the hook length of the first row are
given by 
$$
\newcommand{\nothing}{\mbox{}}
Content(\lambda) = {\Yvcentermath1 \young(0123,\nothing \nothing,\nothing)} 
\qquad  
h(\lambda) = {\Yvcentermath1 \young(6421,\nothing \nothing,\nothing)} 
$$
Therefore, 
$$
\frac{f_{\lambda}(n+1)}{f_{\lambda^{(0)}}(n)} = 
\frac{(n+1)}{6} \frac{(n+2)}{4} \frac{(n+3)}{2} \frac{(n+4)}{1}.
$$
\end{example}
Using Theorem \ref{bernsteintheorem}, we can give the explicit expression 
of the Chebyshev-Bernstein basis associated with the \muntz spaces defined
in the examples section 
\vskip 0.2 cm
\noindent{\bf{Combinatorial \muntz space:}}
Let $(B^{3}_{0,(2,2)}, B^{3}_{1,(2,2)},(B^{3}_{2,(2,2)}, B^{3}_{3,(2,2)})$
be the Chebyshev-Bernstein basis of the \muntz space $\mathcal{E}_{(2,2)}(3)$
associated with the partition $\lambda = (2,2)$ over an interval $[a,b]$, i.e.,
the space $\mathcal{E}_{(2,2)}(3) = span(1,t,t^4,t^5)$. From Theorem \ref{bernsteintheorem},
Lemma \ref{hooklemma} and the branching rule (\ref{branche3}), we have
\begin{equation*}
\newcommand{\nothings}{\mbox{}}
B^{3}_{k,(2,2)}(t) = \frac{10}{3} \frac{\tiny S_{\yng(2)}(U)
\left(S_{{\tiny \yng(2,2)}}(U) t^2 + ab S_{{{ \tiny \young(:\nothings,\nothings\nothings)}}}(U) t +
a^2b^2 \tiny S_{\yng(2)}(U)\right)}{S_{{\tiny \yng(2,2)}}(U,a) S_{{\tiny \yng(2,2)}}(U,b)}
B_{k}^{3}(t),
\end{equation*}
where $U = (a^{3-k},b^{k})$ and the Schur and the skew Schur functions can be computed, for instance,
using the combinatorial definitions.
\vskip 0.2 cm
\noindent{\bf{Elementary \muntz spaces:}}
Let $(B^{n}_{0,(1^r)}, B^{n}_{1,(1^r)},...,B^{n}_{n,(1^r)})$
be the Chebyshev-Bernstein basis of the $r$th elementary \muntz space 
$\mathcal{E}_{(1^r)}(n)$ over an interval $[a,b]$. We have 
$$
t S_{(1^r)}(a^{n-k},b^{k},\frac{ab}{t}) = 
t e_{r}(a^{n-k},b^{k}) + e_{r-1}(a^{n-k},b^{k}).
$$
Moreover, by Lemma \ref{hooklemma}, we have 
$$
\frac{f_{\lambda}(n+1)}{f_{\lambda^{(0)}}(n)} = 
\frac{f_{(1^r)}(n+1)}{f_{(1^{r-1})}(n)} = \frac{n+1}{r}.
$$
Therefore, a direct application of Theorem \ref{bernsteintheorem}, 
shows 
\begin{corollary}
The Chebyshev-Bernstein basis  
$(B^{n}_{0,(1^r)}, B^{n}_{1,(1^r)},...,B^{n}_{n,(1^r)})$ of the $r$th 
elementary \muntz space over an interval $[a,b]$ is given by  
$$ 
B^{n}_{k,(1^{r})}(t) = \frac{(n+1)}{r} B^{n}_{k}(t) 
\frac{e_{r-1}(a^{n-k},b^{k}) \left(t e_{r}(a^{n-k},b^{k})+ a b e_{r-1}(a^{n-k},b^{k}) \right)}
{e_{r}(a^{n+1-k},b^{k}) e_{r}(a^{n-k},b^{k+1})}.
$$ 
\end{corollary}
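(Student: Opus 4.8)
The plan is to obtain the statement as a direct specialization of Theorem \ref{bernsteintheorem} to the partition $\lambda = (1^{r})$, simplifying each of the ingredients appearing in formula (\ref{bernstein}). First I would record the combinatorial data of $(1^{r})$: its bottom partition is $\lambda^{(0)} = (1^{r-1})$, its first part is $\lambda_{1} = 1$, and by the N\"agelsbach--Kostka formula (\ref{kostka}) (or directly from the definition) $S_{(1^{r})}(u_{1},\dots,u_{m}) = e_{r}(u_{1},\dots,u_{m})$ and $S_{(1^{r-1})}(u_{1},\dots,u_{m}) = e_{r-1}(u_{1},\dots,u_{m})$ for any number of variables $m \geq r$. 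Consequently the factor $S_{\lambda^{(0)}}(a^{n-k},b^{k})$ in (\ref{bernstein}) becomes $e_{r-1}(a^{n-k},b^{k})$, while the two denominator Schur functions become $e_{r}(a^{n+1-k},b^{k})$ and $e_{r}(a^{n-k},b^{k+1})$.

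Next I would expand the remaining factor $t^{\lambda_{1}} S_{\lambda}(a^{n-k},b^{k},\frac{ab}{t}) = t\, S_{(1^{r})}(a^{n-k},b^{k},\frac{ab}{t})$. Applying the branching rule (\ref{branchingrule2}) with $\lambda_{1} = 1$ gives a two-term sum, in which $S_{(1^{r})/(0)} = S_{(1^{r})} = e_{r}$ and $S_{(1^{r})/(1)} = e_{r-1}$, the latter because deleting a single box from the top of the column $(1^{r})$ leaves the column $(1^{r-1})$. Hence $t\, S_{(1^{r})}(a^{n-k},b^{k},\frac{ab}{t}) = t\, e_{r}(a^{n-k},b^{k}) + ab\, e_{r-1}(a^{n-k},b^{k})$, which is exactly the bracketed expression in the statement.

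It remains to evaluate the prefactor $f_{\lambda}(n+1)/f_{\lambda^{(0)}}(n)$, for which I would invoke Lemma \ref{hooklemma}: the ratio equals the product over the single box $(1,1)$ forming the first row of $(1^{r})$ of $\frac{(n+1)+c_{\lambda}(1,1)}{h_{\lambda}(1,1)}$; since $c_{\lambda}(1,1) = 0$ and the hook at $(1,1)$ sweeps the whole column, so that $h_{\lambda}(1,1) = r$, this ratio is $\frac{n+1}{r}$. Substituting the four simplified pieces into (\ref{bernstein}) then reproduces the claimed formula verbatim. There is no real obstacle here --- the argument is a verification --- so the only points requiring attention are bookkeeping ones: keeping the Bernstein index $k$ distinct from the column height $r$, correctly reading off the skew Schur function $S_{(1^{r})/(1)}$, and confirming against the convention (\ref{chebyshevconvention}) that the space $\mathcal{E}_{(1^{r})}(n)$ is indeed the $r$th elementary \muntz space.
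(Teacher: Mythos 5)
Your proposal is correct and follows essentially the same route as the paper: a direct specialization of Theorem \ref{bernsteintheorem} to $\lambda=(1^{r})$, computing $t\,S_{(1^{r})}(a^{n-k},b^{k},\frac{ab}{t})=t\,e_{r}(a^{n-k},b^{k})+ab\,e_{r-1}(a^{n-k},b^{k})$ and obtaining the prefactor $(n+1)/r$ from Lemma \ref{hooklemma}. Your bookkeeping is in fact slightly more careful than the paper's, whose intermediate display drops the factor $ab$ in front of $e_{r-1}$ (a typo, since the stated corollary carries it correctly).
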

\vskip 0.2 cm
\noindent{\bf{Complete \muntz spaces: }}
Let $(B^{n}_{0,(r)}, B^{n}_{1,(r)},...,B^{n}_{n,(r)})$
be the Chebyshev-Bernstein basis of the $r$th complete \muntz space 
$\mathcal{E}_{(r)}(n)$ over an interval $[a,b]$. 
The branching rule (\ref{branchingrule1}) leads to  
$$
t^{r} S_{(r)}(a^{n-k},b^{k},\frac{ab}{t}) = 
\sum_{j=0}^{r} (ab)^{r-j} t^{j} h_{j}(a^{n-k},b^{k}).
$$
Therefore, applying Theorem \ref{bernsteintheorem} lead to the same result as in
\cite{Mazure4}, namely 
 
\begin{corollary}
The Chebyshev-Bernstein basis 
$(B^{n}_{0,(r)}, B^{n}_{1,(r)},...,B^{n}_{n,(r)})$ 
of the $r$th complete \muntz  space over an interval $[a,b]$ is given by  
$$ 
B^{n}_{k,(r)}(t) =\binom{n+r}{n} B^{n}_{k}(t)  
\frac{\sum_{j=0}^{r} (ab)^{r-j} h_{j}(a^{n-k},b^{k}) t^{j}}
{h_{r}(a^{n+1-k},b^{k}) h_{r}(a^{n-k},b^{k+1})}.
$$ 
\end{corollary}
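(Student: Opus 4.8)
The plan is to obtain the Corollary as a direct specialization of Theorem \ref{bernsteintheorem} to the horizontal partition $\lambda=(r)$, which is precisely the partition defining the $r$th complete \muntz space $\mathcal{E}_{(r)}(n)$.

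First I would record the data attached to $\lambda=(r)$. Deleting the first row produces the empty bottom partition, $\lambda^{(0)}=\emptyset$, so $S_{\lambda^{(0)}}\equiv 1$ and $f_{\lambda^{(0)}}(n)=1$; also $S_{(r)}=h_r$ and $\lambda_1=r$. The normalizing constant in (\ref{bernstein}) is then $f_{\lambda}(n+1)/f_{\lambda^{(0)}}(n)=f_{(r)}(n+1)$, which by (\ref{normalization1}) equals $\binom{n+r}{r}=\binom{n+r}{n}$; alternatively this follows from Lemma \ref{hooklemma}, since the first row of $(r)$ carries contents $0,1,\dots,r-1$ and hook lengths $r,r-1,\dots,1$, giving $\prod_{j=1}^{r}\frac{n+j}{r-j+1}=\binom{n+r}{n}$.

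Next I would expand the only genuinely $\lambda$-dependent factor of (\ref{bernstein}), namely $t^{\lambda_1}S_{\lambda}(a^{n-k},b^{k},\frac{ab}{t})=t^{r}\,h_r(a^{n-k},b^{k},\frac{ab}{t})$. Applying the branching rule (\ref{branchingrule1}) with extra variable $u_n=\frac{ab}{t}$ to $\lambda=(r)$ — whose interlacing partitions are exactly the single-row shapes $(j)$, $j=0,\dots,r$, with $S_{(j)}=h_j$ and $|\lambda|-|(j)|=r-j$ — yields $h_r(a^{n-k},b^{k},\frac{ab}{t})=\sum_{j=0}^{r}h_j(a^{n-k},b^{k})(\frac{ab}{t})^{r-j}$, and multiplying by $t^{r}$ gives $\sum_{j=0}^{r}(ab)^{r-j}h_j(a^{n-k},b^{k})\,t^{j}$ (the same polynomial one gets from (\ref{branche3})), which is manifestly a polynomial in $t$.

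Finally I would substitute these pieces into (\ref{bernstein}): the factor $S_{\lambda^{(0)}}(a^{n-k},b^{k})$ in the numerator is $1$, the denominator $S_{\lambda}(a^{n+1-k},b^{k})S_{\lambda}(a^{n-k},b^{k+1})$ becomes $h_r(a^{n+1-k},b^{k})h_r(a^{n-k},b^{k+1})$, and collecting the constant $\binom{n+r}{n}$ reproduces the claimed formula (and hence the expression of \cite{Mazure4}). Since the argument is pure bookkeeping once Theorem \ref{bernsteintheorem} is granted, the only point demanding care is reading the \muntz-tableau conventions correctly for $\lambda=(r)$ — so that $\lambda^{(0)}$ really is empty and $\lambda_1=r$ — together with the indexing in the branching rule; no serious obstacle is expected.
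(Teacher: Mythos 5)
Your proposal is correct and follows essentially the same route as the paper: specialize Theorem \ref{bernsteintheorem} to $\lambda=(r)$, note that the bottom partition is empty, and use the branching rule to rewrite $t^{r}h_{r}(a^{n-k},b^{k},\frac{ab}{t})$ as $\sum_{j=0}^{r}(ab)^{r-j}h_{j}(a^{n-k},b^{k})t^{j}$. Your additional verification of the constant $\binom{n+r}{n}$ via Lemma \ref{hooklemma} is consistent with (\ref{normalization1}) and fills in a detail the paper leaves implicit.
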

\vskip 0.2 cm
\noindent{\bf{Hook \muntz spaces:}}
Let $(B^{n}_{0,(l|r)}, B^{n}_{1,(l|r)},...,B^{n}_{n,(l|r)})$ be the
the Chebyshev-Bernstein basis of the $(l|r)$ hook \muntz space 
$\mathcal{E}_{(l|r)}(n)$ over an interval $[a,b]$. 
Noticing that for $j=1,..,l+1$, $(l|r)/(j)$ consist of two 
connected components. Therefore, for $j=1,...,n$,  
we have $S_{(l|r)/(j)} = h_{l+1-j} e_{r}, j=1,...,\lambda_1$.
Therefore, using the branching rule 
(\ref{branchingrule2}), we have 
\begin{equation*}
\begin{split}
t^{l+1}S_{(l|r)} \left( a^{n-k},b^{k},\frac{ab}{t} \right) & = S_{(l|r)}(a^{n-k},b^{k}) t^{l+1} + \\
& \quad \sum_{j=1}^{l+1} h_{l+1-j}(a^{n-k},b^{k})
e_{r}(a^{n-k},b^{k}) (ab)^{j} t^{l+1-j}. \\
\end{split}
\end{equation*}
Therefore, Theorem \ref{bernsteintheorem} gives  
\begin{corollary}
The Chebyshev-Bernstein basis of the $(l|r)$-hook \muntz  
space over an interval $[a,b]$ is given by  
\begin{equation*}
\begin{split}
& B^{n}_{k,(l|r)}(t) = B^{n}_{k}(t) \frac{n+1}{r+l+1} \binom{n+l+1}{n+1} \\
& \frac{\left( e_{r}(a^{n-k},b^{k}) \right)^2 
\sum_{j=1}^{l+1} (ab)^{j} t^{l+1-j} h_{l+1-j}(a^{n-k},b^{k}) +  
e_{r}(a^{n-k},b^{k}) S_{(l|r)}(a^{n-k},b^k) t^{l+1}} 
{S_{(l|r)}(a^{n+1-k},b^{k}) S_{(l|r)}(a^{n-k},b^{k+1})}\quad \\
\end{split}
\end{equation*}
\end{corollary}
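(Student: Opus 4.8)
The plan is to read the corollary off Theorem~\ref{bernsteintheorem} by specializing it to $\lambda = (l|r)$, i.e.\ the hook partition $(l+1,1^{r})$ in ordinary notation, so that $\lambda_{1} = l+1$. Three ingredients are needed beyond the bare substitution into (\ref{bernstein}): the bottom partition $\lambda^{(0)}$, the scalar $f_{\lambda}(n+1)/f_{\lambda^{(0)}}(n)$, and the expansion of the factor $t^{\lambda_{1}}S_{\lambda}(a^{n-k},b^{k},ab/t)$.

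First I would observe that deleting the top row of $(l+1,1^{r})$ leaves $\lambda^{(0)} = (1^{r})$, whence $S_{\lambda^{(0)}} = e_{r}$; in particular $S_{\lambda^{(0)}}(a^{n-k},b^{k}) = e_{r}(a^{n-k},b^{k})$, as was already recorded in the Hook M\"untz spaces example. Next, by Lemma~\ref{hooklemma} the scalar depends only on the top row of $\lambda$: the contents of that row are $0,1,\dots,l$, and its hook lengths are $l+r+1$ at the corner box $(1,1)$ and $l,l-1,\dots,1$ at the remaining boxes of the row (which have empty leg). Hence
\[
\frac{f_{\lambda}(n+1)}{f_{\lambda^{(0)}}(n)} = \prod_{j=1}^{l+1}\frac{(n+1)+c_{\lambda}(1,j)}{h_{\lambda}(1,j)} = \frac{(n+1)(n+2)\cdots(n+l+1)}{(l+r+1)\,l!} = \frac{n+1}{r+l+1}\binom{n+l+1}{n+1},
\]
which is exactly the prefactor in the statement.

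For the last factor I would apply the branching rule (\ref{branchingrule2}) with the last variable equal to $ab/t$, obtaining $t^{l+1}\sum_{j=0}^{l+1}S_{(l|r)/(j)}(a^{n-k},b^{k})(ab/t)^{j}$, the sum truncating at $l+1$ because $(j)\not\subset(l|r)$ for $j>l+1$. The one genuinely combinatorial point, as opposed to bookkeeping, is the identification of these skew Schur functions: for $j=0$ one simply gets the hook Schur function $S_{(l|r)}$, whereas for $1\le j\le l+1$ removing the first $j$ cells of the top row disconnects the skew diagram $(l|r)/(j)$ into a horizontal strip of $l+1-j$ cells (in row $1$, strictly to the right of column $1$) and a vertical strip of $r$ cells (in column $1$, strictly below row $1$); since a skew Schur function factors over the connected components of its diagram, $S_{(l|r)/(j)} = h_{l+1-j}\,e_{r}$. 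This gives the identity displayed just before the corollary, $t^{l+1}S_{(l|r)}(a^{n-k},b^{k},ab/t) = S_{(l|r)}(a^{n-k},b^{k})\,t^{l+1} + \sum_{j=1}^{l+1}h_{l+1-j}(a^{n-k},b^{k})\,e_{r}(a^{n-k},b^{k})\,(ab)^{j}t^{l+1-j}$.

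Finally I would substitute the three pieces into (\ref{bernstein}): multiplying the identity above by $S_{\lambda^{(0)}}(a^{n-k},b^{k}) = e_{r}(a^{n-k},b^{k})$ collects the numerator into the $j=0$ term $e_{r}(a^{n-k},b^{k})\,S_{(l|r)}(a^{n-k},b^{k})\,t^{l+1}$ plus $\bigl(e_{r}(a^{n-k},b^{k})\bigr)^{2}\sum_{j=1}^{l+1}(ab)^{j}t^{l+1-j}h_{l+1-j}(a^{n-k},b^{k})$, while the denominator $S_{\lambda}(a^{n+1-k},b^{k})S_{\lambda}(a^{n-k},b^{k+1})$ is already $S_{(l|r)}(a^{n+1-k},b^{k})S_{(l|r)}(a^{n-k},b^{k+1})$; combined with the scalar above and the classical Bernstein factor $B^{n}_{k}(t)$ this is the asserted formula. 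I do not expect an essential obstacle; the only points to watch are the two ends of the sum: the $j=0$ term, which is a genuine hook Schur function (expandable via (\ref{hook})) rather than a product $h_{l+1}e_{r}$, and the $j=l+1$ term, where $h_{0}=1$.
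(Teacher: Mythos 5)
Your proposal is correct and follows essentially the same route as the paper: specialize Theorem \ref{bernsteintheorem} to $\lambda=(l|r)$, use the branching rule (\ref{branchingrule2}) with last argument $ab/t$, and factor the skew Schur functions $S_{(l|r)/(j)}=h_{l+1-j}e_r$ for $j\ge 1$ over the two connected components of the skew diagram, keeping the $j=0$ term as the genuine hook Schur function. Your explicit verification of the prefactor via the contents and hook lengths of the top row agrees with the value obtained from (\ref{normalization2}), so there is nothing to add.
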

\vskip 0.2 cm
\noindent{\bf{Staircase \muntz spaces: }}
If $(B^{n}_{0,\lambda}, B^{n}_{1,\lambda},...,B^{n}_{n,\lambda})$
is the Chebyshev-Bernstein basis over an interval $[a,b]$ 
of the $l$-staircase \muntz space $\mathcal{E}_{\lambda}(n)$,
where the partition $\lambda$ is given in (\ref{staircase}),
then from (\ref{reparametrizationbernstein}), we have  
$$
B_{k,\lambda}^{n}(t) = B_{k}^{n}(t^{l+1}), 
\quad \textnormal{for} \quad k=0,...,n,
$$
where $B_{k}^n$ is the classical Bernstein basis over the interval 
$[a^{l+1},b^{l+1}]$. Our objective here is then to show that Theorem \ref{bernsteintheorem}
reconfirm this fact. The method of computation consists in using
equations (\ref{staircaseformulas}) for the partitions $\lambda$ 
and $\lambda^{(0)}$ and inserting these equations into 
Theorem \ref{bernsteintheorem}. We will omit all the details of the computation
but only mention that it is helpful to rename $(a^{n-k},b^{k}) = (u_1,...,u_n)$
in order to detect easily the simplifications and that by equations 
(\ref{staircaseformulas}), the term
$$
\frac{f_{\lambda}(n+1)}{f_{\lambda^{(0)}}(n)} = 
\frac{S_{\lambda}(1^{n+1})}{S_{\lambda^{(0)}}(1^n)}
= (l+1)^{n}
$$
appears naturally within the computations. We find 
$$
B_{k,\lambda}^{n}(t) = \binom{n}{k}\frac{(t-a)^{k}(b-t)^{n-k}}{(b-a)^n}
\frac{h_{l}(t,a)^{k} h_{l}(t,b)^{n-k}}{h_{l}(a,b)^{n}}.   
$$
Therefore, 
$$
B_{k,\lambda}^{n}(t) =  \binom{n}{k} \frac{(t^{l+1}-a^{l+1})^{k}(b^{l+1}-t^{l+1})^{n-k}}{(b^{l+1}-a^{l+1})^n}
$$
as expected.

\subsection{Weighted de Casteljau paths}
Our strategy for computing the Chebyshev-Bernstein basis associated 
with a \muntz space $\mathcal{E}_{\lambda}(n)$ consists of computing the 
product of weights associated with specific paths in the de Casteljau 
algorithm. Working directly with the Pascal-like graph of the de Casteljau algorithm reveal 
to be challenging and induction arguments does not seem to work. 
For these reasons, we will define a combinatorial object that, in some sense,
can be viewed as one-dimensional projection of the two-dimensional paths 
in the de Casteljau graph.
\begin{definition}
A set $\mathbb{A}_n = (A_0,A_{1},A_{2},....,A_{n})$ is said to be a de Casteljau path 
in $\{0,1,...,n\}$ if and only if,
\vskip 0.2 cm
\noindent i) Every set $A_l$ is a subset of $\{0,1,...,n\}$ such that $|A_l|=l+1$.
\vskip 0.2cm
\noindent ii) The set $A_{l+1}$ is obtained from $A_{l}$ by adding to $A_{l}$ an element
of the form $max(A_l)+1$ or $min(A_l)-1$ under the condition that the set 
remains a subset of  $\{0,1,...,n\}$.
\end{definition}  
\noindent We will often represent a de Casteljau path as 
$$
A_0 \longrightarrow A_1 \longrightarrow.....A_{n-1} \longrightarrow A_{n}
$$
in which the subsets $A_{i}, i=0,...,n$ are viewed as vertices and the 
arrows are viewed as edges. We will also adopt the convention of writing 
the elements of the set $A_{i}, i=0,...,n$ in increasing order. Note that for 
any de Casteljau path $\mathbb{A}_n = (A_0,...,A_{n})$ in $\{0,1,...,n\}$,
we necessarily have $A_{n} = \{0,1,...,n\}$.  
\begin{example}
Examples of de Casteljau paths in $\{0,1,2,3\}$ and 
$\{0,1,2,3,4\}$ respectively can be given as    
$$
\{1\} \longrightarrow  \{0,1\} \longrightarrow  \{0,1,2\} 
\longrightarrow  \{0,1,2,3\}  
$$
or
$$ 
\{2\} \longrightarrow  \{1,2\} \longrightarrow  \{1,2,3\} 
\longrightarrow  \{1,2,3,4\} \longrightarrow  \{0,1,2,3,4\} 
$$
\end{example}

\begin{definition}\label{weightdefinition}
Let $a, b$ and $t$ be real parameters and let $\psi = (\psi^{+},\psi^{-})$ 
be a non-zero two components real function 
$$
\psi(u_1,...,u_{n-1};a,b,t) = (\psi^{+}(u_1,...,u_{n-1};a,b,t),\psi^{-}(u_1,...,u_{n-1};a,b,t))
$$ 
where $\psi^{+}$ and $\psi^{-}$ are symmetric in the variables $u_i, i=1,...,n-1$.
A $\psi$-weighted de Casteljau path $\mathbb{A}_n = (A_{0},A_{1},...,A_{n})$ is defined as 
associating a weight to every edge $A_l \longrightarrow A_{l+1}$
of the path $\mathbb{A}_{n}$ according to the following rules :

\noindent 1) If $A_{l+1}$ is obtained from $A_{l}$ by adding the element
$max(A_{l})+1$, then the weight of the edge is given by 
\begin{equation}\label{rule1}
\psi^{+}(t^{|A_{l}|-1},b^{min(A_{l+1})},a^{n-|A_{l}|-min(A_{l+1})};a,b,t). 
\end{equation}
\noindent 2) If $A_{l+1}$ is obtained from $A_{l}$ by adding the element
$min(A_{l})-1$, then the weight of the edge is given by  
\begin{equation}\label{rule2}
\psi^{-}(t^{|A_{l}|-1},b^{min(A_{l+1})},a^{n-|A_{l}|-min(A_{l+1})};a,b,t). 
\end{equation}
We will represent the weight on the edges as 
$$
A_{l} \xrightarrow{\psi_1^{+}}   A_{l+1} 
\quad \textnormal{or} \quad 
A_{l} \xrightarrow{\psi_1^{-}}   A_{l+1}
$$
in which there is no need to write the arguments of the functions 
$\psi^{+}$ and $\psi^{-}$ as they are uniquely defined from the sets 
$A_{l}$ and $A_{l+1}$ according to the rules (\ref{rule1}) and (\ref{rule2}). 

We define the weight $W_{\psi,\mathbb{A}_n}(a,b,t)$ of a
$\psi$-weighted de Casteljau path $\mathbb{A}_n$ as the product
of the weights of the edges. 
\end{definition}

Note that by a simple application of the pigeonhole principle, for every de Casteljau 
path $\mathbb{A}_n = (A_0,A_1,...,A_l)$, we have 
$|A_{l}| + min(A_{l+1}) \leq n$. Therefore, all the exponents (referring to the number 
of occurrence of the arguments) in (\ref{rule1}) and (\ref{rule2}) are non-negatives.

\bigskip
Let $\lambda$ be a partition of length at most $n$ and $\lambda^{(0)}$ 
its bottom partition. Let us define a function $\psi_1 = (\psi_1^{+},\psi_1^{-})$ by   
\begin{equation}\label{psi1}
\begin{split}
\psi_1^{+}(u_1,...,u_{n-1};a,b,t) = & 
\frac{S_{\lambda^{(0)}}(u_1,...,u_{n-1},a)}{S_{\lambda^{(0)}}(u_1,...,u_{n-1},t)}, \\ 
\psi_1^{-}(u_1,...,u_{n-1};a,b,t) = & 
\frac{S_{\lambda^{(0)}}(u_1,...,u_{n-1},b)}{S_{\lambda^{(0)}}(u_1,...,u_{n-1},t)}.
\end{split}
\end{equation}
We have 
\begin{proposition}\label{lambda0}
The weight $W_{\mathbb{A},\psi_1}(a,b,t)$ of any de Casteljau path
$\mathbb{A}=(A_0,...,A_n)$ with $A_{0} = \{k\}$ and $\psi_1$ defined in 
(\ref{psi1}) is given by
$$
W_{\mathbb{A},\psi_1}(a,b,t) = 
\frac{S_{\lambda^{(0)}}(a^{n-k},b^{k})}{S_{\lambda^{(0)}}(t^{n})}.
$$
\end{proposition}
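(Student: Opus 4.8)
The plan is to exploit the multiplicative structure of $W_{\mathbb{A},\psi_1}$ and to show that the product of edge weights telescopes. For a de Casteljau path $\mathbb{A} = (A_0, A_1, \dots, A_n)$ with $A_0 = \{k\}$ we have $|A_l| = l+1$ for every $l$; write $m_l = \min(A_l)$, so that $m_0 = k$ and, since necessarily $A_n = \{0,1,\dots,n\}$, also $m_n = 0$. Because $A_l$ is an $(l+1)$-element subset of $\{0,\dots,n\}$ with minimum $m_l$, the pigeonhole bound gives $l + m_l \le n$, hence the quantity
$$
N(l,m) := S_{\lambda^{(0)}}\bigl(t^{\,l},\, b^{\,m},\, a^{\,n-l-m}\bigr)
$$
is well defined at every state $(l,m_l)$ occurring along $\mathbb{A}$ and at its one-step successors. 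The first observation is that an edge $A_l\to A_{l+1}$ of $\mathbb{A}$ is completely determined by the source data $(l, m_l)$ together with its type: a ``$+$''-edge (adding $\max(A_l)+1$) produces the state $(l+1, m_l)$, while a ``$-$''-edge (adding $\min(A_l)-1$) produces the state $(l+1, m_l - 1)$.

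Next I would compute the weight of each type of edge from Definition~\ref{weightdefinition} and the formulas~(\ref{psi1}). For a ``$+$''-edge, rule~(\ref{rule1}) gives the weight $\psi_1^{+}(t^{l}, b^{m_l}, a^{n-l-1-m_l}; a,b,t)$; substituting the definition of $\psi_1^{+}$, the extra argument $a$ raises the $a$-block of the numerator to $n-l-m_l$ copies and the extra argument $t$ raises the $t$-block of the denominator to $l+1$ copies, so this weight equals $N(l, m_l)/N(l+1, m_l)$. For a ``$-$''-edge, rule~(\ref{rule2}) gives $\psi_1^{-}(t^{l}, b^{m_l - 1}, a^{n-l-m_l}; a,b,t)$; now the extra argument $b$ in the numerator of $\psi_1^{-}$ brings the $b$-block up to $m_l$ copies, the extra argument $t$ in the denominator raises the $t$-block to $l+1$ copies, and the $a$-exponent there is $n-(l+1)-(m_l-1)=n-l-m_l$, so the weight equals $N(l, m_l)/N(l+1, m_l - 1)$. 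In both cases the edge weight is exactly $N(\text{source state})/N(\text{target state})$.

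Multiplying over the $n$ edges of $\mathbb{A}$ then telescopes:
$$
W_{\mathbb{A},\psi_1}(a,b,t) \;=\; \prod_{l=0}^{n-1} \frac{N(l, m_l)}{N(l+1, m_{l+1})} \;=\; \frac{N(0, m_0)}{N(n, m_n)} \;=\; \frac{N(0, k)}{N(n, 0)} \;=\; \frac{S_{\lambda^{(0)}}(a^{n-k}, b^{k})}{S_{\lambda^{(0)}}(t^{n})},
$$
where the last equality uses $m_0 = k$, $m_n = 0$, and the symmetry of $S_{\lambda^{(0)}}$ in its arguments (so $S_{\lambda^{(0)}}(b^k, a^{n-k}) = S_{\lambda^{(0)}}(a^{n-k}, b^k)$ and $S_{\lambda^{(0)}}(t^n)=N(n,0)$). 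There is no genuine obstacle here; the only delicate point is the bookkeeping of exponents in the two edge-weight computations, in particular checking that every exponent stays non-negative via the pigeonhole bound $|A_l| + \min(A_{l+1}) \le n$ recorded after Definition~\ref{weightdefinition}, together with the key structural fact that the numerator of either weight depends only on the source state while the denominator depends only on the target state — which is precisely what makes the telescoping go through.
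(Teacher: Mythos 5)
Your proof is correct and follows essentially the same telescoping argument as the paper: the paper verifies, case by case over the four adjacent-edge configurations, that the denominator of each edge weight cancels the numerator of the next, whereas you package the same cancellation by exhibiting each edge weight as $N(\text{source})/N(\text{target})$ for the explicit state function $N(l,m)$. The exponent bookkeeping in both edge cases checks out, so this is just a cleaner organization of the paper's own proof.
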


\begin{proof}
Let $\mathbb{A}_n = (A_0,A_1,...,A_n)$ be a de Casteljau path and  
consider the generic product of weights over two arbitrary adjacent
edges 
$$
A_{l} \longrightarrow  A_{l+1} \longrightarrow  A_{l+1}.
$$
In general, we would have four situations with regards to the weights,
namely
\begin{equation}\label{edges1}
A_{l} \xrightarrow{\psi_1^{+}} A_{l+1} \xrightarrow{\psi_1^{+}} A_{l+2}, \quad
A_{l} \xrightarrow{\psi_1^{+}} A_{l+1} \xrightarrow{\psi_1^{-}} A_{l+2},
\end{equation}
\begin{equation}\label{edges2}
A_{l} \xrightarrow{\psi_1^{-}} A_{l+1} \xrightarrow{\psi_1^{-}} A_{l+2}, \quad
A_{l} \xrightarrow{\psi_1^{-}} A_{l+1}  \xrightarrow{\psi_1^{+}} A_{l+2},
\end{equation}
Let us consider the first case 
$A_{l} \xrightarrow{\psi_1^{+}} A_{l+1} \xrightarrow{\psi_1^{+}} A_{l+2}$ and 
let $D$ be the denominator of the first edge and $N$ the numerator of the second edge.
By definition \ref{weightdefinition}, we have 
$$
D = S_{\lambda^{(0)}}(t^{|A_l|-1},b^{min(A_{l+1})},a^{n-|A_l|-min(A_{l+1})},t)
$$
and 
$$
N = S_{\lambda^{(0)}}(t^{|A_{l+1}|-1},b^{min(A_{l+2})},a^{n-|A_{l+1}|-min(A_{l+2})},a).
$$
In this case, we have $min(A_{l}) = min(A_{l+1}) = min(A_{l+2})$.
Counting the number of occurrence of the variables $t, a$ and $b$
in the expression of $D$ and $N$, shows that $D = N$. 
Similar arguments show that also for the other situations in (\ref{edges1}) 
and (\ref{edges2}) the denominator of the first edge is equal 
to the numerator of the second edge. Therefore, upon taking the product of 
the weights of a de Casteljau path, the denominator of the weight of an edge
will be simplified with the numerator of the weight of the adjacent edge.
The two factors that survive the simplifications are: the numerator of the 
weight of the first edge and the denominator of the weight of the last edge
of the de Casteljau path.
Let us compute the numerator of the weight of the first edge:
Since, by assumption, we have $A_{0} = \{k \}$, we will have 
two situations 
$$
A_{0} = \{ k \}  \xrightarrow{\psi_1^{+}} \{ k,k+1 \} 
\quad or \quad
A_{0} = \{ k \}  \xrightarrow{\psi_1^{-}} \{ k,k-1 \}. 
$$
In the first case, the numerator is $S_{\lambda^{(0)}}(t^0,b^k,a^{n-k-1},a)$, 
while in the second case, the numerator is $S_{\lambda^{(0)}}(t^0,b^{k-1},a^{n-k},b)$.
Therefore, in both cases, the numerator is $S_{\lambda^{(0)}}(a^{n-k},b^{k})$.  
For the denominator of the weight of the last edge,
we can only have a single situation, which is   
$$
A_{n-1} \xrightarrow{\psi_1^{+}} \{0,1,...n\}, 
$$
and in which the denominator is given by $S_{\lambda^{(0)}}(t^n)$.
\end{proof}

\begin{remark}
Note that we did not use the fact that $S_{\lambda^{(0)}}$ is a Schur function 
and all what was needed is for the function $S_{\lambda^{(0)}}$ to be a symmetric function.
A similar remark can be applied to the next proposition. The reason for us not 
to state the propositions in their full generality is to make the exposition for later use 
more transparent.
\end{remark}

Let $\lambda$ be a partition of length at most $n$ and consider the 
function $\psi_2 = (\psi_2^{+},\psi_2^{-})$ defined by  
\begin{equation}\label{psi2}
\begin{split}
\psi_2^{+}(u_1,...,u_{n-1};a,b,t) = &
\frac{S_{\lambda}(u_1,...,u_{n-1},b,t)}{S_{\lambda}(u_1,...,u_{n-1},a,b)}, \\
\psi_2^{-}(u_1,...,u_{n-1};a,b,t) = &
\frac{S_{\lambda}(u_1,...,u_{n-1},a,t)}{S_{\lambda}(u_1,...,u_{n-1},a,b)}.
\end{split}
\end{equation} 
In this case, there would be simplifications in the weight associated to every 
de Casteljau path, however there is no simple close explicit formulas that embodies 
all the simplifications. In the following, we will show that the 
specialization $t=a$ or $t=b$ on the weights of every de Casteljau path is given by 
a simple closed formulas. More precisely, we have 

\begin{proposition}\label{psi2path}
For any de Casteljau path $\mathbb{A}_n=(A_0,A_1,...,A_n)$ such that 
$A_{0} = \{  k \}$, we have 
\begin{equation}\label{first}
W_{(\psi_2,\mathbb{A}_n)}(a,b,t)|_{t=a} = \frac{S_{\lambda}(a^{n+1})}
{S_{\lambda}(a^{n+1-k},b^{k})}
\end{equation}
and 
\begin{equation}\label{second}
W_{(\psi_2,\mathbb{A}_n)}(a,b,t)|_{t=b} = \frac{S_{\lambda}(b^{n+1})}
{S_{\lambda}(a^{n-k},b^{k+1})},
\end{equation}
where $\psi_2$ is defined in (\ref{psi2}).
\end{proposition}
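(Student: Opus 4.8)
The plan is to exploit the symmetry of the Schur functions once we specialise $t=a$ (respectively $t=b$): this collapses half of the edge weights to $1$ and turns the remaining product into a telescoping one. First I would record the two trivial specialisations. Setting $t=a$ in $\psi_2^{+}$ of (\ref{psi2}) gives $S_{\lambda}(\ldots,b,a)/S_{\lambda}(\ldots,a,b)=1$ by symmetry, so \emph{every} $+$-edge of any de Casteljau path carries weight $1$ at $t=a$; symmetrically, setting $t=b$ in $\psi_2^{-}$ shows every $-$-edge carries weight $1$ at $t=b$. Hence only the $-$-edges contribute to (\ref{first}) and only the $+$-edges to (\ref{second}).

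For (\ref{first}), take a $-$-edge $A_{l}\to A_{l+1}$, so that $A_{l+1}=A_{l}\cup\{\min(A_l)-1\}$, and write $m=\min(A_{l+1})$. By rule (\ref{rule2}) and (\ref{psi2}), its weight at $t=a$ is
$$
\frac{S_{\lambda}(a^{|A_l|-1},b^{m},a^{n-|A_l|-m},a,a)}
{S_{\lambda}(a^{|A_l|-1},b^{m},a^{n-|A_l|-m},a,b)}.
$$
Since every set $A_l$ occurring in a de Casteljau path is a block of consecutive integers (an immediate induction using the defining property ii)), counting how many $a$'s and $b$'s appear in the numerator and in the denominator shows that this ratio depends on the edge only through $m$, and in fact equals $S_{\lambda}(a^{n+1-m},b^{m})/S_{\lambda}(a^{n-m},b^{m+1})$; the pigeonhole bound noted after Definition \ref{weightdefinition} ensures the exponents are non-negative.

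Now a de Casteljau path with $A_0=\{k\}$ and $A_n=\{0,1,\ldots,n\}$ has exactly $k$ many $-$-edges, and along it the freshly added minimal element takes the values $k-1,k-2,\ldots,1,0$, each exactly once, no matter how the $+$-edges are interleaved. Multiplying the weights above therefore gives
$$
W_{(\psi_2,\mathbb{A}_n)}(a,b,t)\big|_{t=a}
=\prod_{m=0}^{k-1}\frac{S_{\lambda}(a^{n+1-m},b^{m})}{S_{\lambda}(a^{n-m},b^{m+1})}
=\frac{S_{\lambda}(a^{n+1})}{S_{\lambda}(a^{n+1-k},b^{k})},
$$
the product telescoping because the denominator at level $m$ is the numerator at level $m+1$; this is (\ref{first}). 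Equation (\ref{second}) is the mirror image: at $t=b$ only the $+$-edges survive, the weight of the $+$-edge creating the maximal element $M=\max(A_{l+1})$ works out (again by an occurrence count, now using $\min(A_l)=\max(A_l)-|A_l|+1$) to $S_{\lambda}(b^{M+1},a^{n-M})/S_{\lambda}(b^{M},a^{n+1-M})$, and along the path $M$ runs through $k+1,\ldots,n$, so the analogous telescoping product equals $S_{\lambda}(b^{n+1})/S_{\lambda}(a^{n-k},b^{k+1})$.

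The main obstacle is the exponent bookkeeping. One has to be careful that $A_l$ is always an interval of integers, so that the weight of an edge truly depends on the single parameter $m$ (the newly adjoined endpoint) and on nothing else; that the ``irrelevant'' edges evaluate to exactly $1$ rather than merely to some path-independent constant; and that the parameter ranges are precisely $\{0,\ldots,k-1\}$ for (\ref{first}) and $\{k+1,\ldots,n\}$ for (\ref{second}). Once these points are nailed down, the telescoping --- and hence the proposition --- is automatic.
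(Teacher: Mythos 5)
Your proof is correct and follows essentially the same route as the paper: specialize $t=a$ (resp.\ $t=b$) so that all $\psi_2^{+}$ (resp.\ $\psi_2^{-}$) edge weights become $1$, and then observe that the surviving weights cancel in a chain, leaving only the first denominator and last numerator. Your packaging --- evaluating each surviving edge weight in closed form as a function of the newly adjoined endpoint and then telescoping over $m=0,\dots,k-1$ (resp.\ $M=k+1,\dots,n$) --- is in fact slightly cleaner than the paper's adjacent-pair cancellation argument, since the empty and one-term products absorb the special cases $k=0$ and $k=1$ that the paper treats separately.
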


\begin{proof}
Let us start with the first equation (\ref{first}) of the Proposition. Noticing
that for $t=a$, we have $\psi_{2}^{+} \equiv 1$ shows that the edges with weight $1$ 
does not contribute to the total weight of a de Casteljau path.
Let $\mathbb{A}_n = (A_0,A_1,...,A_n)$ be a de Casteljau path and consider a situation
in which a part of the path has the following weights  
\begin{equation}\label{localpath}
A_{l} \xrightarrow{\psi_2^{-}} A_{l+1} \xrightarrow{1} A_{l+2}\xrightarrow{1} ....\xrightarrow{1}A_{l+h-1}
\xrightarrow{\psi_2^{-}} A_{l+h}
\end{equation}
with $h \geq 1$. Consider the numerator $N$ of the weight of the first
edge of (\ref{localpath}) and the denominator $D$ of the weight of 
last edge of (\ref{localpath}). We have 
$$
N = S_{\lambda}(t^{|A_l|-1},b^{min(A_{l+1})},a^{n-|A_{l}|-min(A_{l+1})},a,t) 
$$
and 
$$
D = S_{\lambda}(t^{|A_{l+h-1}|-1},b^{min(A_{l+h})},a^{n-|A_{l+h-1}|-min(A_{l+h})},a,b).
$$
From the structure of the path (\ref{localpath}), we can see that
$min(A_{l+h}) = min(A_{l+1})-1$. Therefore, the number of occurence of 
$b$ in $N$ is equal to the number of occurence of $b$ in $D$.
This shows that when $t=a$, we have $N=D$.
Thus, for any de Casteljau path,
the numerator of an edge with weight $\psi_2^{-}$ will be simplified with the denominator
of the weight of the closest edge with weight $\psi_2^{-}$. There is two special cases that 
will need a separate treatment. The case in which there is no edge with weight $\psi_2^{-}$ 
and the case where there is a single edge with weight  $\psi_2^{-}$. In the former case,
we only have a single de Casteljau path, namely  
$$
\{0\}  \xrightarrow{1}  \{0,1\}  \xrightarrow{1} \{0,1,2\} \xrightarrow{1} 
 ....  \xrightarrow{1}  \{0,1,2,...,n\}
$$
with weight equal to $1$. This case corresponds to a de Casteljau path
$\mathbb{A}_n = (A_0,A_1,...,A_n)$ with $A_{0} = \{0\}$ and the weight of the path is
consistent with equation (\ref{first}) of the proposition for $k=0$.  
In the latter case, there exists an $r \leq n$ such that the de Casteljau path will look like 
$$
\{1\}  \xrightarrow{1}  \{1,2\} .... \{1,2,...,r\}  \xrightarrow{\psi_2^{-}} 
\{0,1,...,r\} \xrightarrow{1} ....   \xrightarrow{1} \{0,1,2,...,n\}.
$$
In this case the weight of the path, at the specialization $t=a$, is given by 
$$
\frac{S_{\lambda}(t^{r},a^{n-r+1})}{S_{\lambda}(t^{r-1},a^{n-r+1},b)}|_{t=a} 
= \frac{S_{\lambda}(a^{n+1})}{S_{\lambda}(a^{n},b)}.
$$
As this case corresponds to a de Casteljau path
$\mathbb{A}_n = (A_0,A_1,...,A_n)$ with $A_{0} = \{1\}$, the result is again  
consistent with the first equation of the proposition when $k =1$.
For all the other cases, the weight of the de Casteljau path is then a ratio in which 
the denominator is given by the denominator of the first edge with weight $\psi_2^{-}$,
and the numerator is the numerator of the last edge with weight $\psi_2^{-}$. 
For the first edge we can only have the following two situations 
$$
\{k\}  \xrightarrow{\psi_2^{-}} \{k,k-1\} 
$$
and
$$
\{k\}  \xrightarrow{1} \{k,k+1\}  \xrightarrow{1} \{k,k+1,k+2\}.... \xrightarrow{1} \{k,k+1,k+2,...\}
\xrightarrow{\psi_2^{-}} \{k-1,k,k+1,...\}.
$$
In both cases, the denominator, when $t=a$, is given by 
$$
S_{\lambda}(a^{n+1-k},b^{k}).
$$ 
For the last edge, we would have the following situation 
$$
A_{l}\xrightarrow{\psi_2^{-}} A_{l+1} \xrightarrow{1} A_{l+2}\xrightarrow{1}....\xrightarrow{1} \{0,1,...,n\}.
$$
In this case, we have $min(A_{l+1})=0$ and the numerator of the edge 
$A_{l}\xrightarrow{\psi_2^{-}} A_{l+1}$, when $t=a$, is given by 
$$
S_{\lambda}(a^{n+1}).
$$
This prove the statement of (\ref{first}). Similar arguments when we take the 
specialization $t=b$ lead to the second equation (\ref{second}) of the proposition.
\end{proof}

Consider, now, the triangular Pascal-like graph of the de Casteljau
algorithm. We encode every node of the graph as follows : 
the node that lies in the $r$th horizontal level and the $k$th
position going from left to right is encoded as 
$\{k-1,k,k+1,...,k+r-2\}$. For example, the code associated with
the de Casteljau algorithm based on $4$ control points is given by

\setlength{\unitlength}{1mm}
\begin{picture}(100,40)
\put(24, 33){\{0\}} \put(44,33){\{1\}}
\put(64,33){\{2\}}   \put(84,33){\{3\}} 
\put(26,31){\vector(1,-1){5}} \put(46,31){\vector(-1,-1){5}}
\put(47,31){\vector(1,-1){5}} \put(66,31){\vector(-1,-1){5}}
\put(67,31){\vector(1,-1){5}} \put(86,31){\vector(-1,-1){5}}
\put(32, 22){\{0,1\}} \put(52, 22){\{1,2\}} \put(72, 22){\{2,3\}}
\put(100, 22){\mbox{($\Gamma$)}} 
\put(36,20){\vector(1,-1){5}} \put(56,20){\vector(-1,-1){5}} 
\put(57,20){\vector(1,-1){5}}  \put(76,20){\vector(-1,-1){5}} 
\put(41, 12){\{0,1,2\}} \put(61, 12){\{1,2,3\}} 
\put(46,10){\vector(1,-1){5}} \put(66,10){\vector(-1,-1){5}} 
\put(50, 2){\{0,1,2,3\}}
\end{picture}

\noindent Let $\lambda$ be a partition of length at most $n$ and 
denote by $\psi = (\psi^{+},\psi^{-})$ the function such that 
\begin{equation}
\begin{split}
\psi^{+}(u_1,....,u_{n-1};a,b,t) = & \beta(u_1,....,u_{n-1};a,b,t) \\
\psi^{-}(u_1,....,u_{n-1};a,b,t) = & \alpha(u_1,....,u_{n-1};a,b,t),
\end{split}
\end{equation}
where $\alpha$ and $\beta$ are the pseudo-affinity factors of the space 
$\mathcal{E}_{\lambda}(n)$ as defined in Theorem \ref{pseudotheorem}. 
We have
\begin{equation}\label{totalpsi}
\psi = (\psi^{+},\psi^{-}) = 
(\frac{b-t}{b-a}\psi_1^{+} \psi_2^{+}, \frac{t-a}{b-a}\psi_1^{-} \psi_2^{-}),
\end{equation}
where $\psi_1$ and $\psi_2$ are defined in (\ref{psi1}) and (\ref{psi2}) 
respectively. Consider the de Casteljau algorithm based on the 
control points $(\varphi(a^{n}),\varphi(a^{n-1},b),....,\varphi(b^{n}))$ 
where $\varphi$ is the Chebyshev blossom of a Chebyshev function $\phi$ over an interval 
$[a,b]$. Let us write a generic triangle in the de Casteljau algorithm with the value 
of the pseudo-affinity on the edges of the triangle and write the same triangle
with our coding of the de Casteljau algorithm and in which the weight on the edges follow
the rules (\ref{rule1}) and (\ref{rule2}) of Definition \ref{weightdefinition}.
We have 
\setlength{\unitlength}{1mm}
\begin{picture}(100,40)
\put(20,33){\mbox{$\small{\varphi(a^{n-r-k+2},b^{k-1},t^{r-1})}$}} 
\put(60,33){\mbox{$\varphi(a^{n-r-k+1},b^{k},t^{r-1})$}} 
\put(40,31){\vector(1,-2){7}} \put(80,31){\vector(-1,-2){7}}
\put(43,13){\mbox{$\varphi(a^{n-r-k+1},b^{k-1},t^{r})$}} 
\put(7,25){\mbox{$\small{\beta(a^{n-r-k+1},b^{k-1},t^{r-1};a,b,t)}$}} 
\put(62,25){\mbox{$\small{\alpha(a^{n-r-k+1},b^{k-1},t^{r-1};a,b,t)}$}} 
\end{picture}

\noindent and 

\setlength{\unitlength}{1mm}
\begin{picture}(100,40)
\put(20,33){\{$k-1,k,...,k+r-2$\}} 
\put(60,33){\{$k,k+1,...,k+r-1$\}} 
\put(40,31){\vector(1,-2){7}} \put(80,31){\vector(-1,-2){7}}
\put(43,13){\{$k-1,k,...,k+r-1$\}} 
\put(7,25){\mbox{$\small{\psi^{+}(a^{n-r-k+1},b^{k-1},t^{r-1};a,b,t)}$}} 
\put(62,25){\mbox{$\small{\psi^{-}(a^{n-r-k+1},b^{k-1},t^{r-1};a,b,t)}$}} 
\end{picture}

\noindent Realizing that the weights in both of the edges of the triangles are the same shows that 
if we denote by $(B_{0,\lambda}^{n},B_{1,\lambda}^{n},...,B_{n,\lambda}^{n})$
the Chebyshev-Bernstein basis of the \muntz space $\mathcal{E}_{\lambda}(n)$ over 
the interval $[a,b]$, then the de Casteljau algorithm is the claim that for $k=0,...,n$
$$
B_{k,\lambda}^{n}(t) = \sum_{\mathbb{A}_n} W_{\psi,\mathbb{A}_n}(a,b,t),
$$
where the sum is over all the de Casteljau paths $\mathbb{A}_n = (A_0,...,A_n)$
such that $A_0 = \{k\}$. It is simple to see from (\ref{totalpsi}) 
that for any de Casteljau path $\mathbb{A}_n =(A_0,...,A_n)$ such that $A_0 = \{k\}$, 
we have 
$$
W_{\psi,\mathbb{A}_n}(a,b,t) = \frac{(b-t)^{n-k} (t-a)^k}{(b-a)^n} 
W_{\psi_1,\mathbb{A}_n}(a,b,t) W_{\psi_2,\mathbb{A}_n}(a,b,t)
$$
Using Proposition \ref{lambda0} for $W_{\psi_1,\mathbb{A}_n}(a,b,t)$, we obtain

\begin{proposition}\label{pathproposition}
Let $\lambda$ be a partition of length at most $n$ and let 
$(B_{0,\lambda}^{n},B_{1,\lambda}^{n},...,$ $B_{n,\lambda}^{n})$ be
the Chebyshev-Bernstein basis of the \muntz space $\mathcal{E}_{\lambda}(n)$
over an interval $[a,b]$.Then, we have
$$
B_{k,\lambda}^{n}(t) = \frac{(b-t)^{n-k} (t-a)^k}{(b-a)^n} 
\frac{S_{\lambda^{(0)}}(a^{n-k},b^{k})}{S_{\lambda^{(0)}}(t^{n})} \sum_{\mathbb{A}_n} 
W_{\psi_2,\mathbb{A}_n}(a,b,t),
$$  
where the sum is over all the de Castelaju paths 
$\mathbb{A}_n= (A_0,...,A_n)$ such that $A_0 = \{ k \}$. $\lambda^{(0)}$ is the bottom
partition of the partition $\lambda$ and the function $\psi_2$ is defined in (\ref{psi2}).
\end{proposition}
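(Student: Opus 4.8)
The plan is to obtain the proposition as a one-line substitution into the path-sum representation of the Chebyshev-Bernstein basis that has just been assembled above. Recall that, under the encoding of the nodes of the de Casteljau graph $(\Gamma)$, the two generic triangles displayed before the statement carry the same weights, so the de Casteljau recursion on the control points $\Pi_j=\varphi(a^{n-j},b^{j})$ (affinely independent, so that the affine representation of $\phi(t)$ it produces is unique and must coincide with the Chebyshev-Bernstein one) gives $B_{k,\lambda}^{n}(t)=\sum_{\mathbb{A}_n}W_{\psi,\mathbb{A}_n}(a,b,t)$, the sum running over all de Casteljau paths $\mathbb{A}_n=(A_0,...,A_n)$ with $A_0=\{k\}$ and $\psi=(\psi^{+},\psi^{-})=(\beta,\alpha)$ the pair of pseudo-affinity factors of $\mathcal{E}_\lambda(n)$. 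Into this I would feed the factorization $\psi=(\frac{b-t}{b-a}\psi_1^{+}\psi_2^{+},\frac{t-a}{b-a}\psi_1^{-}\psi_2^{-})$ of (\ref{totalpsi}) together with the closed form for the $\psi_1$-weight supplied by Proposition~\ref{lambda0}.

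The one thing I would make explicit is that, in the product expansion of a path weight, two of the three factors are the same for every path with $A_0=\{k\}$. First, such a path must adjoin to $\{k\}$ exactly the $n-k$ elements $k+1,...,n$ (each by a ``$+$''-step, carrying the scalar $\frac{b-t}{b-a}$) and the $k$ elements $0,...,k-1$ (each by a ``$-$''-step, carrying $\frac{t-a}{b-a}$); since this multiset of adjoined elements is fixed, the scalar contribution is always $\frac{(b-t)^{n-k}(t-a)^{k}}{(b-a)^{n}}$, independent of the order in which the steps occur. Second, Proposition~\ref{lambda0} gives $W_{\psi_1,\mathbb{A}_n}(a,b,t)=S_{\lambda^{(0)}}(a^{n-k},b^{k})/S_{\lambda^{(0)}}(t^{n})$ for every such path, again independent of $\mathbb{A}_n$. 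Pulling both of these constant factors out of the sum and keeping only the $\psi_2$-part of each edge leaves
$$
B_{k,\lambda}^{n}(t)=\frac{(b-t)^{n-k}(t-a)^{k}}{(b-a)^{n}}\,\frac{S_{\lambda^{(0)}}(a^{n-k},b^{k})}{S_{\lambda^{(0)}}(t^{n})}\sum_{\mathbb{A}_n}W_{\psi_2,\mathbb{A}_n}(a,b,t),
$$
which is exactly the claimed identity.

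I do not expect a genuine obstacle at this stage: the substantive work has already been spent upstream, in the telescoping of numerators against denominators of adjacent edges underlying Proposition~\ref{lambda0}, and in the index matching of Definition~\ref{weightdefinition} against the arguments of $\alpha$ and $\beta$ at a generic de Casteljau triangle. The only point in the present argument that warrants a careful sentence is the combinatorial count of ``$+$''- and ``$-$''-edges of a path starting at $\{k\}$; everything else is merely the distributivity of a product of sums over the family of de Casteljau paths.
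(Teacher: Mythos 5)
Your argument is correct and is essentially the derivation the paper itself gives in the text immediately preceding the proposition: identify the de Casteljau recursion on the encoded graph with the path-sum $\sum_{\mathbb{A}_n}W_{\psi,\mathbb{A}_n}$, factor $\psi$ via (\ref{totalpsi}), and pull out the path-independent scalar factor together with the $\psi_1$-weight from Proposition~\ref{lambda0}. Your explicit count of the $n-k$ ``$+$''-steps and $k$ ``$-$''-steps is a welcome spelling-out of what the paper dismisses as ``simple to see.''
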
   

There is two special cases in which the quantity $W_{\psi_2,\mathbb{A}}(a,b,t)$ can be given 
an explicit expression. Let us consider the set of the de Casteljau paths 
$\mathbb{A}_n = (A_0,...,A_n)$ such that $A_0 = \{ 0 \}$. In fact, there is a
single path which is 
$$
\{0\}  \xrightarrow{\psi_2^{+}}  \{0,1\}  \xrightarrow{\psi_2^{+}} \{0,1,2\} \xrightarrow{\psi_2^{+}} 
 ....  \xrightarrow{\psi_2^{+}}  \{0,1,2,...,n\}.
$$
As the reader can readily check, the product of weights along this single path
is given by 
$$
W_{\psi_2,\mathbb{A}_n}(a,b,t) =\frac{S_{\lambda}(b,t^{n})}{S_{\lambda}(b,a^{n})}.
$$ 
For similar reasons, there is a single path  $\mathbb{A}_n = (A_0,...,A_n)$ 
such that $A_0 = \{n\}$, namely 
$$
\{n\}  \xrightarrow{\psi_2^{-}}  \{n-1,n\}  \xrightarrow{\psi_2^{-}} \{n-2,n-1,n\} \xrightarrow{\psi_2^{-}} 
 ....  \xrightarrow{\psi_2^{-}}  \{0,1,2,...,n\}.
$$
Along this path we have    
$$
W_{\psi_2,\mathbb{A}_n}(a,b,t) =\frac{S_{\lambda}(a,t^{n})}{S_{\lambda}(a,b^{n})}.
$$ 
Therefore, according to Proposition \ref{pathproposition}, we have 
\begin{proposition}\label{firstlast}
Let $\lambda$ be a partition of length at most $n$ and let 
$(B_{0,\lambda}^{n},B_{1,\lambda}^{n},...,$ $B_{n,\lambda}^{n})$ be
the Chebyshev-Bernstein basis of the \muntz space $\mathcal{E}_{\lambda}(n)$
over an interval $[a,b]$. Then, we have   
$$
B_{0,\lambda}^{n}(t) = \frac{(b-t)^n}{(b-a)^n} 
\frac{S_{\lambda}(b,t^{n})}{S_{\lambda}(b,a^{n})}
\frac{S_{\lambda^{(0)} }(a^{n})}{S_{\lambda^{(0)}}(t^{n})}
$$
and
$$
B_{n,\lambda}^{n}(t) = \frac{(t-a)^n}{(b-a)^n} 
\frac{S_{\lambda}(a,t^{n})}{S_{\lambda}(a,b^{n})}
\frac{S_{\lambda^{(0)}}(b^{n})}{S_{\lambda^{(0)}}(t^{n})}.
$$
\end{proposition}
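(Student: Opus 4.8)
The plan is to specialize Proposition \ref{pathproposition} to the two extreme values $k=0$ and $k=n$, where the combinatorics degenerates completely: there is exactly one de Casteljau path $\mathbb{A}_n=(A_0,\dots,A_n)$ with $A_0=\{0\}$ (respectively with $A_0=\{n\}$), so the sum over paths in Proposition \ref{pathproposition} collapses to a single product of edge weights, which then telescopes to a ratio of two Schur functions.

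First I would treat $k=0$. Since $\{0\}$ cannot be enlarged by adjoining $\min(A_l)-1=-1$, every step must adjoin $\max(A_l)+1$, forcing $A_l=\{0,1,\dots,l\}$, so all edges carry the weight $\psi_2^{+}$. At level $l$ we have $|A_l|=l+1$ and $\min(A_{l+1})=0$, hence $n-|A_l|-\min(A_{l+1})=n-1-l$, and rule (\ref{rule1}) together with (\ref{psi2}) gives the edge weight
$$
\psi_2^{+}(t^{l},a^{\,n-1-l};a,b,t)=\frac{S_{\lambda}(t^{l},a^{\,n-1-l},b,t)}{S_{\lambda}(t^{l},a^{\,n-1-l},a,b)}=\frac{S_{\lambda}(t^{\,l+1},a^{\,n-1-l},b)}{S_{\lambda}(t^{l},a^{\,n-l},b)}.
$$
Multiplying over $l=0,\dots,n-1$, the numerator at level $l$ cancels the denominator at level $l+1$, so $W_{\psi_2,\mathbb{A}_n}(a,b,t)=S_{\lambda}(t^{n},b)/S_{\lambda}(a^{n},b)=S_{\lambda}(b,t^{n})/S_{\lambda}(b,a^{n})$ by symmetry of $S_\lambda$. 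Inserting this, together with the prefactor $(b-t)^{n}/(b-a)^{n}$ and the ratio $S_{\lambda^{(0)}}(a^{n})/S_{\lambda^{(0)}}(t^{n})$ coming from $k=0$, into Proposition \ref{pathproposition} yields the stated formula for $B^{n}_{0,\lambda}$.

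The case $k=n$ is the mirror image: $\{n\}$ can only grow by adjoining $\min(A_l)-1$, so $A_l=\{n-l,\dots,n\}$ and every edge carries $\psi_2^{-}$. Now $|A_l|=l+1$, $\min(A_{l+1})=n-1-l$, hence $n-|A_l|-\min(A_{l+1})=0$, and rule (\ref{rule2}) with (\ref{psi2}) gives the edge weight $\psi_2^{-}(t^{l},b^{\,n-1-l};a,b,t)=S_{\lambda}(t^{\,l+1},b^{\,n-1-l},a)/S_{\lambda}(t^{l},b^{\,n-l},a)$, and the same telescoping collapses the product to $S_{\lambda}(t^{n},a)/S_{\lambda}(b^{n},a)=S_{\lambda}(a,t^{n})/S_{\lambda}(a,b^{n})$. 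Substituting into Proposition \ref{pathproposition} with $k=n$, where the polynomial prefactor is $(t-a)^{n}/(b-a)^{n}$ and the bottom-partition ratio is $S_{\lambda^{(0)}}(b^{n})/S_{\lambda^{(0)}}(t^{n})$, produces the formula for $B^{n}_{n,\lambda}$.

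There is no real obstacle here; the only point needing care is the bookkeeping of the exponents in (\ref{rule1})--(\ref{rule2}) at the extreme levels, namely checking that the $a$-multiplicity (resp. $b$-multiplicity) along the unique path is exactly $n-1-l$ and that consecutive edge weights share a common factor so the product genuinely telescopes. Once this is verified, both identities are immediate consequences of Proposition \ref{pathproposition}.
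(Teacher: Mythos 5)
Your proposal is correct and follows exactly the paper's route: identify the unique de Casteljau path starting at $\{0\}$ (resp. $\{n\}$), observe that the product of $\psi_2$-weights telescopes to $S_{\lambda}(b,t^{n})/S_{\lambda}(b,a^{n})$ (resp. $S_{\lambda}(a,t^{n})/S_{\lambda}(a,b^{n})$), and substitute into Proposition \ref{pathproposition}. The only difference is that you spell out the exponent bookkeeping that the paper leaves as "readily checked," and your bookkeeping is accurate.
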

\vskip 0.2 cm
In general, the expression of the Chebyshev-Bernstein basis
obtained by computing the weight on the de Casteljau paths
has complicated expressions. Let us for example consider a partition
$\lambda$ of length at most $2$ with it associated Chebyshev space 
$\mathcal{E}_{\lambda}(2)$.
Proposition \ref{firstlast} provides us with the Chebyshev-Bernstein functions
$B_{0,\lambda}^2$ and $B_{2,\lambda}^2$. In order to compute
$B_{1,\lambda}^2$, we should compute 
$\sum_{\mathbb{A}_2} W_{\psi_2,\mathbb{A}_2}(a,b,t)$
where the sum is over all the de Casteljau paths 
$\mathbb{A}_n = (A_0,A_1,A_2)$ with $A_0 = \{ 1 \}$.
In this case, we have two de Casteljau paths namely, 
$$
\{1\}  \xrightarrow{\psi_2^{-}} \{0,1\}  \xrightarrow{\psi_2^{+}} \{0,1,2\}, 
\qquad 
\{1\}  \xrightarrow{\psi_2^{+}} \{1,2\}  \xrightarrow{\psi_2^{-}}  \{0,1,2\}.
$$ 
Computing the weights along these two paths lead to 
$$
B_{1,\lambda}^2(t) = \frac{(b-t)(t-a)}{(b-a)^2}
\frac{S_{\lambda^{(0)}}(a,b)}{S_{\lambda^{(0)}}(t,t)}
\left( \frac{S_{\lambda}(a,a,t) S_{\lambda}(t,t,b)}
{S_{\lambda}(a,b,t) S_{\lambda}(a,a,b)}  + 
\frac{S_{\lambda}(b,b,t) S_{\lambda}(t,t,a)}
{S_{\lambda}(a,b,t) S_{\lambda}(b,b,a)} \right).
$$
It is rather surprising that with this expression in hand 
the function $B_{1,\lambda}^2$ is a polynomial function in $t$.
\vskip 0.2 cm
Although summing the weights over the de Castelaju paths does not lead
to a practical method of computing the Chebyshev-Bernstein basis,
the concept leads to the following important information about 
the derivatives of the Chebyshev-Bernstein basis, which by some 
inductive argument on nested \muntz spaces will provide us with
the desired explicit expression.   
\begin{theorem}\label{bernsteinderivative}
Let $\lambda = (\lambda_1,\lambda_2,...,\lambda_n)$ be a partition of 
length at most $n$ and let 
$(B_{0,\lambda}^n(t),B_{1,\lambda}^n(t),...,B_{n,\lambda}^n(t))$ be 
the Chebyshev-Bernstein basis of the \muntz space $\mathcal{E}_{\lambda}(n)$
over an interval $[a,b]$. Then, we have 
$$
{B_{k,\lambda}^n}^{(k)}(a) = \frac{n! \quad a^{\lambda_1}}{(n-k)!(b-a)^k} 
\frac{f_{\lambda}(n+1)}{f_{\lambda^{(0)}}(n)} 
\frac{S_{\lambda^{(0)}}(a^{n-k},b^{k})} 
{S_{\lambda}(a^{n+1-k},b^{k})} 
$$
and 
$$
{B_{k,\lambda}^n}^{(n-k)}(b) =  (-1)^{n-k}\frac{n!\quad b^{\lambda_1}}{k!(b-a)^{n-k}} 
\frac{f_{\lambda}(n+1)}{f_{\lambda^{(0)}}(n)} 
\frac{S_{\lambda^{(0)}}(a^{n-k},b^{k})}
{S_{\lambda}(a^{n-k},b^{k+1})},
$$
where $\lambda^{(0)}$ is the bottom partition of $\lambda$.
\end{theorem}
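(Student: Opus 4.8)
The plan is to start from the path expansion of Proposition~\ref{pathproposition},
\[
B_{k,\lambda}^{n}(t) = \frac{(b-t)^{n-k}(t-a)^{k}}{(b-a)^{n}}\, g(t),\qquad
g(t):=\frac{S_{\lambda^{(0)}}(a^{n-k},b^{k})}{S_{\lambda^{(0)}}(t^{n})}\sum_{\mathbb{A}_n}W_{\psi_2,\mathbb{A}_n}(a,b,t),
\]
the sum being over the de Casteljau paths $\mathbb{A}_n=(A_0,\dots,A_n)$ with $A_0=\{k\}$, and to read off the two derivatives from this factorisation. Since $a$ and $b$ are distinct positive numbers, for $t$ near $a$ (resp.\ near $b$) every Schur function occurring in a weight $\psi_2^{\pm}$ is evaluated at strictly positive arguments and is therefore positive by the combinatorial definition of Schur functions; hence $g$ is smooth and non-vanishing in a neighbourhood of $a$ and of $b$.

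Next I would extract the derivatives by a single-term Leibniz argument. Writing $B_{k,\lambda}^{n}(t)=(t-a)^{k}h(t)$ with $h(t)=(b-t)^{n-k}g(t)/(b-a)^{n}$ smooth at $a$, every term of $\frac{d^{k}}{dt^{k}}\big[(t-a)^{k}h(t)\big]$ other than the one in which all $k$ derivatives hit $(t-a)^{k}$ vanishes at $t=a$, so ${B_{k,\lambda}^{n}}^{(k)}(a)=k!\,h(a)=\frac{k!}{(b-a)^{k}}\,g(a)$. Symmetrically, writing $(b-t)^{n-k}=(-1)^{n-k}(t-b)^{n-k}$ and $B_{k,\lambda}^{n}(t)=(b-t)^{n-k}\tilde h(t)$ with $\tilde h(t)=(t-a)^{k}g(t)/(b-a)^{n}$, the same reasoning at $t=b$ gives ${B_{k,\lambda}^{n}}^{(n-k)}(b)=(-1)^{n-k}(n-k)!\,\tilde h(b)=(-1)^{n-k}\frac{(n-k)!}{(b-a)^{n-k}}\,g(b)$. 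That these orders of vanishing are exact, so that no further cancellation occurs, follows from $g(a),g(b)\neq 0$ above, and is also guaranteed by Theorem~\ref{mazuretheorem}.

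It then remains to evaluate $g(a)$ and $g(b)$ in closed form. A de Casteljau path from $\{k\}$ to $\{0,1,\dots,n\}$ makes exactly $k$ leftward and $n-k$ rightward steps, in some order, so there are precisely $\binom{n}{k}$ such paths with $A_0=\{k\}$; and by Proposition~\ref{psi2path} every one of them satisfies $W_{\psi_2,\mathbb{A}_n}(a,b,t)|_{t=a}=S_{\lambda}(a^{n+1})/S_{\lambda}(a^{n+1-k},b^{k})$ and $W_{\psi_2,\mathbb{A}_n}(a,b,t)|_{t=b}=S_{\lambda}(b^{n+1})/S_{\lambda}(a^{n-k},b^{k+1})$. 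Hence
\[
g(a)=\binom{n}{k}\,\frac{S_{\lambda^{(0)}}(a^{n-k},b^{k})}{S_{\lambda^{(0)}}(a^{n})}\,\frac{S_{\lambda}(a^{n+1})}{S_{\lambda}(a^{n+1-k},b^{k})},\qquad
g(b)=\binom{n}{k}\,\frac{S_{\lambda^{(0)}}(a^{n-k},b^{k})}{S_{\lambda^{(0)}}(b^{n})}\,\frac{S_{\lambda}(b^{n+1})}{S_{\lambda}(a^{n-k},b^{k+1})}.
\]

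Finally I would invoke the homogeneity of Schur functions, $S_{\mu}(c^{m})=c^{|\mu|}S_{\mu}(1^{m})=c^{|\mu|}f_{\mu}(m)$, together with $|\lambda|-|\lambda^{(0)}|=\lambda_1$ (since $\lambda^{(0)}=(\lambda_2,\dots,\lambda_n)$), to rewrite $S_{\lambda}(a^{n+1})/S_{\lambda^{(0)}}(a^{n})=a^{\lambda_1}f_{\lambda}(n+1)/f_{\lambda^{(0)}}(n)$ and the analogous identity with $b$; substituting these into the expressions for $g(a)$ and $g(b)$ and simplifying with $k!\binom{n}{k}=n!/(n-k)!$ and $(n-k)!\binom{n}{k}=n!/k!$ produces exactly the two stated formulas. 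I expect the only delicate point to be the combinatorial bookkeeping — counting the $\binom{n}{k}$ paths with $A_0=\{k\}$ and checking that Proposition~\ref{psi2path} applies to each of them verbatim — rather than the algebra, which is routine once the normalisation $f_{\mu}(m)=S_{\mu}(1^{m})$ from (\ref{hooklengthformulas}) is used.
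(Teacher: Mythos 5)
Your proposal is correct and follows essentially the same route as the paper's own proof: both start from the factorisation of Proposition \ref{pathproposition}, apply a Leibniz argument to peel off the $(t-a)^k$ (resp.\ $(b-t)^{n-k}$) factor, and then evaluate the surviving term using Proposition \ref{psi2path}, the count of $\binom{n}{k}$ de Casteljau paths with $A_0=\{k\}$, and the homogeneity identity $S_{\mu}(x^m)=f_{\mu}(m)x^{|\mu|}$. Your added remarks on the positivity of the Schur functions and the explicit justification of the path count are details the paper leaves implicit, but they do not change the argument.
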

\begin{proof}
Let us define the function $H(a,b,t)$ by 
\begin{equation*}
H(a,b,t) = \frac{S_{\lambda^{(0)}}(a^{n-k},b^{k})}
{S_{\lambda^{(0)}}(t^{n})} \sum_{\mathbb{A}_n} W_{\psi_2,\mathbb{A}_n}(a,b,t),
\end{equation*}
where the sum is over all the de Casteljau paths $\mathbb{A}_n = (A_0,A_1,...,A_n)$
such that 
$A_0 = \{ k \}$. From Proposition \ref{pathproposition}, we have 
$$
{B_{k,\lambda}^n}(t) = \frac{(b-t)^{n-k} (t-a)^k}{(b-a)^n} H(a,b,t).
$$
By the Leibniz derivative formula, we then have 
$$
{B_{k,\lambda}^n}^{(k)}(a) = \frac{n!}{\binom{n}{k}(n-k)! (b-a)^k} H(a,b,t)_{|t=a}.
$$
Using Proposition \ref{psi2path}, the fact that there is $C_{n}^{k}$
de Casteljau paths $\mathbb{A}_n= (A_0,...,A_n)$ such that  $A_0 = \{ k \}$
and the fact that for any partition $\mu$, we have  
$S_{\mu}(x^n) = f_{\mu}(n) x^{|\mu|}$, we arrive at the first equation of 
the Proposition. 
Similar treatment on the $(n-k)$ derivative of the Chebyshev-Bernstein 
basis at the parameter $b$ lead to the second equation of the proposition
\end{proof}
 
\subsection{A Descent Construction of the Chebyshev-Bernstein Basis}
In the following, we will show that Theorem \ref{bernsteinderivative}
will allow us to relate the Chebyshev-Bernstein bases of \muntz spaces 
associated with two different partitions under a condition on the 
partitions that we state in the following definition  

\begin{definition}
Let $\lambda$ be a partition of length at most $n$. A partition 
$\mu$ of length at most $(n+1)$ is said to be a dimension elevation partition
of $\lambda$ if, and only if the \muntz spaces associated with the partitions
$\lambda$ and $\mu$ satisfy
$$
\mathcal{E}_{\lambda}(n) \subset \mathcal{E}_{\mu}(n+1).
$$
\end{definition}

We can characterize all the dimension elevation partitions of a 
given partition $\lambda$ as follows :

\begin{proposition}\label{elevationproposition}
Let $\lambda = (\lambda_1,...,\lambda_n)$ be a partition of length at 
most $n$. Then every dimension elevation partition 
$\mu$ is of the form 
\begin{equation}\label{elevation1}
\mu = (r+\lambda_1,r+\lambda_2,...,r + \lambda_n,r), 
\quad {\textnormal{with}} \quad r \geq 0,
\end{equation}
or of the form 
\begin{equation}\label{elevation2}
\mu = (\lambda_1-1,\lambda_2-1,...,\lambda_s-1,\rho,
\lambda_{s+1},...,\lambda_n) 
\quad {\textnormal{with}} \quad  1 \leq s \leq n,
\end{equation}
under the condition that $\mu$ is a partition. 
\end{proposition}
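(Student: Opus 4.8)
The plan is to reduce the statement to a combinatorial fact about the sets of exponents occurring in the two \muntz spaces. By (\ref{chebyshevconvention}), the space attached to a partition $\nu=(\nu_1,\dots,\nu_\ell)$ of length at most $\ell$ is $\mathcal{E}_{\nu}(\ell)=\mathrm{span}\{t^{e}\,:\,e\in E_{\nu}\}$, where
$$
E_{\nu}=\{0\}\cup\{\nu_1-\nu_{j+1}+j\,:\,1\le j\le \ell-1\}\cup\{\nu_1+\ell\}.
$$
Since distinct powers of $t$ are linearly independent on $]0,+\infty[$, the inclusion $\mathcal{E}_{\lambda}(n)\subset\mathcal{E}_{\mu}(n+1)$ holds if and only if $E_{\lambda}\subseteq E_{\mu}$; and as $\dim\mathcal{E}_{\mu}(n+1)=\dim\mathcal{E}_{\lambda}(n)+1$, this is in turn equivalent to $E_{\mu}=E_{\lambda}\cup\{m\}$ for a single positive integer $m\notin E_{\lambda}$. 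The first step I would carry out is to observe that for every $\ell\ge1$ the map $\nu\mapsto E_{\nu}$ is a bijection from partitions of length at most $\ell$ onto $\ell$-element subsets of $\mathbb{Z}_{>0}$: from a set $\{f_1<\dots<f_\ell\}$ one recovers $\nu$ by $\nu_1=f_\ell-\ell$ and $\nu_{j+1}=\nu_1+j-f_j$ for $1\le j\le \ell-1$, and the inequalities $f_1\ge1$, $f_{j+1}\ge f_j+1$ guarantee that this is a genuine partition. Hence each admissible set $E_{\lambda}\cup\{m\}$ is the exponent set of a unique partition $\mu$ of length at most $n+1$, and the whole task becomes to write that $\mu$ down explicitly in terms of $\lambda$ and of the inserted exponent $m$.

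For the characterization I would then split according to the position of $m$ among the positive elements $0<e_1<\dots<e_n$ of $E_{\lambda}$, where $e_j=\lambda_1-\lambda_{j+1}+j$ for $j\le n-1$ and $e_n=\lambda_1+n$; putting $e_0:=0$, either $m>e_n$, or $e_{s-1}<m<e_s$ for a unique $s\in\{1,\dots,n\}$. In the first case $m$ is the largest exponent of $\mathcal{E}_{\mu}(n+1)$, so $\mu_1=m-(n+1)$, and feeding the sorted list $e_1<\dots<e_n<m$ into the reconstruction formula $\mu_{j+1}=\mu_1+j-g_j$ (with $g_j$ the $j$-th smallest positive exponent of $\mu$) collapses, using $e_j=\lambda_1-\lambda_{j+1}+j$, to $\mu=(r+\lambda_1,\dots,r+\lambda_n,r)$ with $r=m-\lambda_1-n-1\ge0$; this is (\ref{elevation1}), and such a $\mu$ is always a partition. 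In the second case the largest exponent of $\mathcal{E}_{\mu}(n+1)$ is still $e_n=\lambda_1+n$, so $\mu_1=\lambda_1-1$; the sorted positive exponents of $\mu$ are $e_1,\dots,e_{s-1},m,e_s,\dots,e_n$, and the same reconstruction formula yields $\mu_j=\lambda_j-1$ for $1\le j\le s$, $\mu_{s+1}=\lambda_1+s-1-m=:\rho$, and $\mu_j=\lambda_{j-1}$ for $s+2\le j\le n+1$, which is exactly (\ref{elevation2}). Finally, the inequalities $e_{s-1}<m<e_s$ are equivalent to $\lambda_{s+1}\le\rho\le\lambda_s-1$ (with $\lambda_{n+1}:=0$), i.e. to the requirement that the displayed string be a weakly decreasing sequence of non-negative integers; this is precisely why the extra hypothesis that $\mu$ be a partition is the only one that must be attached to (\ref{elevation2}).

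The converse (sufficiency) direction is the same computation run backwards: for $\mu$ as in (\ref{elevation1}) a one-line check gives $E_{\mu}=E_{\lambda}\cup\{\lambda_1+n+r+1\}\supseteq E_{\lambda}$, and for $\mu$ as in (\ref{elevation2}) that happens to be a partition, $E_{\mu}=E_{\lambda}\cup\{\lambda_1+s-1-\rho\}\supseteq E_{\lambda}$, so in both cases $\mathcal{E}_{\lambda}(n)\subset\mathcal{E}_{\mu}(n+1)$ follows by linear independence of monomials. I expect no conceptual obstacle here; the only points that need real care are stating the partition--exponent-set bijection cleanly and handling the two boundary indices of the second case correctly --- $s=1$, where one must allow $e_0=0$, and $s=n$, where $\lambda_{n+1}=0$ enters --- so that the reconstructed string coincides with (\ref{elevation2}) verbatim. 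Everything else is elementary manipulation of the identity $e_j=\lambda_1-\lambda_{j+1}+j$.
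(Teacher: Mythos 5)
Your proposal is correct and follows essentially the same route as the paper: identify $\mathcal{E}_\lambda(n)\subset\mathcal{E}_\mu(n+1)$ with the insertion of a single new monomial exponent $m$, then split according to whether $m$ exceeds the top exponent $\lambda_1+n$ (giving (\ref{elevation1}) with $r=\mu_1-\lambda_1\ge 0$) or falls strictly between two consecutive exponents (giving (\ref{elevation2}) with $\rho=\lambda_1+s-1-m$). Your write-up is somewhat more careful than the paper's — you make the partition--exponent-set bijection and the converse (sufficiency) direction explicit, and you note that the interlacing condition $e_{s-1}<m<e_s$ is exactly equivalent to $\mu$ being a partition — but the underlying argument is the same.
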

\begin{proof}
Let $\phi$ be the Chebyshev function associated with the partition $\lambda$.
Then $\phi$ is given by 
$$
\phi(t) = \left( t^{\lambda_1-\lambda_2 +1},t^{\lambda_1-\lambda_3+2},...,
t^{\lambda_1-\lambda_n + (n-1)},t^{\lambda_1+n} \right)^{T}.
$$
There is two ways to supplement the function $\phi$ with an extra component 
of the form $t^{m}$ with $m$ a positive integer.
\vskip 0.1cm
\noindent{\bf{The first way: }}We can add a function 
$t^{m}$ such that $m$ is strictly larger than all the exponents in the components
of the function $\phi$, i.e., $m > \lambda_1 + n$.
In this case, if we denote by 
$\mu = (\mu_1,\mu_2,...,\mu_{n+1})$ the partition 
associated with the obtained space, then we have   
\begin{equation}\label{localpartition}
m = \mu_1 + (n+1), \;
\lambda_1 - \lambda_i = \mu_1 - \mu_i, 
\; for \quad i=2,...,n,
\; \textnormal{and} \; \mu_1 - \mu_{n+1} = \lambda_1.
\end{equation}   
If we denote by $r =  \mu_1 - \lambda_1$, then $r \geq 0$ as 
$m > \lambda_1 +n$. Moreover, equation (\ref{localpartition})
shows that we can write 
$\mu_i$ 
as $r + \lambda_i$ for $i=1,...,n$ and $\mu_{n+1} = r$, thereby leading to 
the form of the partition in (\ref{elevation1}).
\vskip 0.1cm
\noindent{\bf{The second way: }}We can add a function of the form
$t^m$ in which the exponent $m$ lies between two exponents of 
the components of the function $\phi$, i.e., for an $1 \leq s \leq n$, we insert $t^m$ between 
$t^{\lambda_1-\lambda_s + (s-1)}$ and $t^{\lambda_1-\lambda_{s+1}+s}$ ($\lambda_{n+1} = 0$). Note that 
this is possible only if $\lambda_s > \lambda_{s+1}$. In this case, we would have 
$\lambda_1 + n = \mu_1 + (n+1)$ and therefore, $\mu_1 = \lambda_1-1$.
By using the condition that we should have
$$
\lambda_1 - \lambda_i = \mu_1 - \mu_i, 
\quad for \quad i=2,...,s
$$
and 
$$
\lambda_1 - \lambda_i -1 = \mu_1 - \mu_{i+1},
\quad for \quad i=s+1,...,n.
$$
we arrive at a partition of the form (\ref{elevation2}) with $\rho = \lambda_1 + s - (m+1)$.
\end{proof}  

Consider, now, a partition $\lambda$ of length at most $n$, and 
let $\mu$ a dimension elevation partition of $\lambda$. 
As an element of $\mathcal{E}_{\mu}(n+1)$,
the Chebyshev-Bernstein basis $B_{k,\lambda}^n, k=0,...,n,$ of 
$\mathcal{E}_{\lambda}(n)$ over an interval $[a,b]$ can be expressed 
as linear combination of the the Chebyshev-Bernstein basis  
$B_{k,\mu}^{n+1}, k=0,...,n+1,$ of $\mathcal{E}_{\mu}(n+1)$ over the same interval.
Exhibiting the vanishing properties of the Chebyshev-Bernstein bases as 
expressed in Theorem \ref{mazuretheorem} shows that \cite{Aldaz} 
\begin{equation}\label{derivativerecurence}
B_{k,\lambda}^n(t) = \frac{{B_{k,\lambda}^n}^{(k)}(a)}
{{B_{k,\mu}^{n+1}}^{(k)}(a)} B_{k,\mu}^{n+1}(t) 
+ \frac{{B_{k,\lambda}^n}^{(n-k)}(b)}{{B_{k+1,\mu}^{n+1}}^{(n-k)}(b)}
B_{k+1,\mu}^{n+1}(t).
\end{equation}
As Proposition \ref{bernsteinderivative} gives explicit expressions
of all the needed derivatives in the last equation,
we can express the Chebyshev-Bernstein basis associated with 
the partition $\lambda$ in term of the Chebyshev-Bernstein basis associated with 
a dimension elevation partition $\mu$. To write the expression in a more compact 
fashion we define the following factors 

\begin{definition}
Let $\lambda$ (resp. $\mu$) be a partition of length at most $n$
(resp. at most $(n+1)$). Denote by $\lambda^{(0)}$ 
(resp. $\mu^{(0)}$) the bottom partition of $\lambda$ (resp. $\mu$). 
For $k=0,...,n$, we define the following factors 
\begin{equation}\label{factor1}
\Gamma_{\lambda}^{\mu}(n,k) = 
\frac{S_{\lambda^{(0)}}(a^{n-k},b^k)
S_{\mu}(a^{n+2-k},b^k)}
{S_{\lambda}(a^{n+1-k},b^k) S_{\mu^{(0)}}(a^{n+1-k},b^k)}
\end{equation}
and 
\begin{equation}\label{factor2}
\Delta_{\lambda}^{\mu}(n,k)=
\frac{S_{\lambda^{(0)}}(a^{n-k},b^k)
S_{\mu }(a^{n-k},b^{k+2})}
{S_{\lambda}(a^{n-k},b^{k+1}) S_{\mu^{(0)}}(a^{n-k},b^{k+1})}
\end{equation}
and 
\begin{equation}\label{factor3}
{\binom{\mu}{\lambda}}_{\hskip -0.1cm {n}} = 
\frac{f_{\lambda}(n+1) f_{\mu^{(0)}}(n+1)}{f_{\lambda^{(0)}}(n) f_{\mu}(n+2)}.
\end{equation}
\end{definition}

Now using equation (\ref{derivativerecurence}) 
and in which we insert the value of the needed derivatives 
from Proposition \ref{bernsteinderivative}
we arrive at the following
\begin{theorem}\label{elevationtheorem}
Let $\lambda = (\lambda_1,...,\lambda_n)$ be a partition of 
length at most $n$ and let 
$\mu =(\mu_1,...,\mu_{n+1})$ be a dimension elevation partition 
of $\lambda$. 
Denote by $(B_{0,\lambda}^{n},...,B_{n,\lambda}^{n})$ (resp 
$(B_{0,\mu}^{n+1},...,B_{n+1,\mu}^{n+1})$) the 
Chebyshev-Bernstein basis of $\mathcal{E}_{\lambda}(n)$(resp. 
$\mathcal{E}_{\mu}(n+1)$) over an interval $[a,b]$. Then, we have
$$
B_{k,\lambda}^n(t) =
\frac{(n+1-k)}{n+1} {\binom{\mu}{\lambda}}_{\hskip -0.1cm n} a^{\rho}
\Gamma_{\lambda}^{\mu}(n,k)  B_{k,\mu}^{n+1}(t) 
+\frac{(k+1)}{n+1} {\binom{\mu}{\lambda}}_{\hskip -0.1cm n}
b^{\rho} \Delta_{\lambda}^{\mu}(n,k) B_{k+1,\mu}^{n+1}(t).
$$
where $\rho = \lambda_1 - \mu_1$.
\end{theorem}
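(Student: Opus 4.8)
The plan is to obtain the formula directly from the recurrence (\ref{derivativerecurence}), which has already been established from the vanishing characterization of Theorem \ref{mazuretheorem} together with a result of \cite{Aldaz}, by substituting into it the closed-form boundary derivatives supplied by Theorem \ref{bernsteinderivative}. Thus the whole proof reduces to evaluating the two scalar ratios $\,{B_{k,\lambda}^n}^{(k)}(a)\big/{B_{k,\mu}^{n+1}}^{(k)}(a)\,$ and $\,{B_{k,\lambda}^n}^{(n-k)}(b)\big/{B_{k+1,\mu}^{n+1}}^{(n-k)}(b)\,$, and then reading the result off against the definitions (\ref{factor1})--(\ref{factor3}).

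For the first ratio I would apply Theorem \ref{bernsteinderivative} once to $B_{k,\lambda}^n$ (the pair $(\lambda,n)$ at index $k$) and once to $B_{k,\mu}^{n+1}$ (the pair $(\mu,n+1)$ at index $k$); the derivative order is $k$ on both sides, so the formulas line up and the ratio becomes a product of a factorial factor, a power of $a$, a ratio of four hook-length numbers, and a ratio of four Schur evaluations. The factorials $n!/(n-k)!$ against $(n+1)!/(n+1-k)!$ collapse to $(n+1-k)/(n+1)$; the monomial $a^{\lambda_1}/a^{\mu_1}$ becomes $a^{\rho}$ with $\rho=\lambda_1-\mu_1$; the four numbers $f_{\lambda}(n+1),f_{\lambda^{(0)}}(n),f_{\mu}(n+2),f_{\mu^{(0)}}(n+1)$ regroup precisely as ${\binom{\mu}{\lambda}}_{n}$; and, using $S_{\mu}(a^{(n+1)+1-k},b^{k})=S_{\mu}(a^{n+2-k},b^{k})$ and $S_{\mu^{(0)}}(a^{(n+1)-k},b^{k})=S_{\mu^{(0)}}(a^{n+1-k},b^{k})$, the Schur evaluations regroup precisely as $\Gamma_{\lambda}^{\mu}(n,k)$. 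This yields the first term. The second ratio is treated identically, the only point of care being the index shift: for $B_{k+1,\mu}^{n+1}$ the relevant order at $b$ is $(n+1)-(k+1)=n-k$, matching the order $n-k$ of $B_{k,\lambda}^{n}$, so the two factors $(-1)^{n-k}$ coming from Theorem \ref{bernsteinderivative} cancel; the factorials $n!/k!$ against $(n+1)!/(k+1)!$ now collapse to $(k+1)/(n+1)$, the monomial gives $b^{\rho}$, the hook-lengths again give ${\binom{\mu}{\lambda}}_{n}$, and the Schur factors assemble into $\Delta_{\lambda}^{\mu}(n,k)$. Substituting both back into (\ref{derivativerecurence}) gives the stated identity.

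There is essentially no analytic difficulty here: the substantive work was done in Theorem \ref{bernsteinderivative} and in deriving (\ref{derivativerecurence}). The only thing to watch is the bookkeeping of the index shifts --- verifying that the derivative orders on the two sides of each ratio genuinely agree (which is exactly what makes the combination in (\ref{derivativerecurence}) legitimate) and that the Schur-function arguments produced by Theorem \ref{bernsteinderivative} with $n$ replaced by $n+1$ are term-for-term the ones packaged into $\Gamma_\lambda^\mu$ and $\Delta_\lambda^\mu$. The definitions (\ref{factor1})--(\ref{factor3}) were plainly chosen so that this matching is exact, so the ``calculation'' is really just a relabelling, and I would present the proof as: invoke (\ref{derivativerecurence}), substitute Theorem \ref{bernsteinderivative} twice for each coefficient, and collect terms.
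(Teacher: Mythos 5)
Your proposal is correct and is exactly the paper's argument: the text preceding the theorem derives it by inserting the boundary derivatives of Theorem \ref{bernsteinderivative} into the recurrence (\ref{derivativerecurence}), and your bookkeeping of the factorials, the powers $a^{\rho}$, $b^{\rho}$, the hook-length ratio ${\binom{\mu}{\lambda}}_{n}$, and the Schur factors $\Gamma_{\lambda}^{\mu}(n,k)$, $\Delta_{\lambda}^{\mu}(n,k)$ checks out, including the cancellation of the $(-1)^{n-k}$ signs in the second ratio.
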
   

To illustrate the use of the last Theorem as a mean of finding explicit expression
for the Chebyshev-Bernstein basis, consider the $r$th elementary \muntz space
i.e., the \muntz space associated with the partition $\lambda = (1^{r})$.
The partition $\mu = (0)$ is a dimension elevation partition of $\lambda$ whose
Chebyshev-Bernstein basis over an interval $[a,b]$ is given by the classical
polynomial Bernstein basis of order $n+1$ over the interval $[a,b]$. We have 
$\lambda^{(0)} = (1^{r-1})$, $\mu = \mu^{(0)} = (0)$ and  
${\binom{\lambda}{\mu}}_{\hskip -0.1cm n} = (n+1)/r$.
Therefore, applying Theorem \ref{elevationtheorem} leads to
$$
B_{k,(1^r)}^n(t) = \frac{e_{r-1}(a^{n-k},b^{k})}{r} \left( \frac{(n+1-k)a}{e_r(a^{n+1-k},b^k)}
B_{k}^{n+1}(t) + 
\frac{(k+1)b}{e_r(a^{n-k},b^{k+1})}
B_{k+1}^{n+1}(t) \right).   
$$
This illustrative example prompt us to consider the following 
algorithm for computing the Chebyshev-Bernstein basis associated
with a partition $\lambda$. We can construct a sequence 
of nested \muntz spaces $\mathcal{E}_{\mu_{(j)}}(n+j), j=0,...,m$ such that
\begin{equation}\label{nested}
\mathcal{E}_{\lambda}(n) = \mathcal{E}_{\mu _{(0)}}(n)
\subset \mathcal{E}_{\mu _{(1)}}(n+1) \subset ....\subset 
\mathcal{E}_{\mu_{(m-1)}}(n+m-1) \subset
\mathcal{E}_{\mu _{(m)}}(n+m),
\end{equation}
where the partition $\mu _{(m)} =0$.    
As the Chebyshev-Bernstein basis of the space 
$\mathcal{E}_{\mu_{(m)}}(n+m)$ is the classical 
Bernstein basis of degree $n+m$, we can construct iteratively
the Chebyshev-Bernstein  bases starting from the space 
$\mathcal{E}_{\mu_{(m-1)}}(n+m-1)$ until reaching the space 
$\mathcal{E}_{\lambda}(n)$ using Theorem \ref{elevationtheorem}.

There is several sequences of nested spaces that satisfies (\ref{nested}),
starting from the space $\mathcal{E}_{\lambda}(n)$ and in accordance with the constraints 
of Proposition \ref{elevationproposition}. For example we have the following two sequences 
of nested \muntz spaces
$$
\mathcal{E}_{\tiny{\yng(3,1)}}(n) \subset \mathcal{E}_{\tiny{\yng(2)}}(n+1)
\subset \mathcal{E}_{\tiny{\yng(1)}}(n+2)
\subset \mathcal{E}_{\emptyset}(n+3)
$$ 
and
$$
\mathcal{E}_{\tiny{\yng(3,1)}}(n) \subset \mathcal{E}_{\tiny{\yng(2,2,1)}}(n+1)
\subset \mathcal{E}_{\tiny{\yng(1,1)}}(n+2)
\subset \mathcal{E}_{\emptyset}(n+3)
$$
which correspondent respectively to the following situation of nested \muntz spaces
\begin{equation*}
\begin{split}
& \qquad (1,t^3,t^5,t^6,...,t^{n+3})\subset span(1,t^3,t^4,t^5,t^6,...,t^{n+3}) \subset \\ 
& span(1,t^2,t^3,t^4,t^5,t^6,...,t^{n+3})
\subset span(1,t,t^2,t^3,t^4,t^5,t^6,...,t^{n+3})
\end{split}
\end{equation*}
and 
\begin{equation*}
\begin{split}
& \qquad (1,t^3,t^5,t^6,...,t^{n+3})\subset span(1,t,t^3,t^5,t^6,...,t^{n+3}) \subset \\ 
& span(1,t,t^3,t^4,t^5,t^6,...,t^{n+3})
\subset span(1,t,t^2,t^3,t^4,t^5,t^6,...,t^{n+3}).
\end{split}
\end{equation*}

\begin{remark} 
In some circumstances, it is not necessary to have a full descent of nested
\muntz spaces as in (\ref{nested}) to compute the Chebyshev-Bernstein basis of a 
specific \muntz space.
We can sometimes use the staircase \muntz spaces as a short-cut space for the 
computation. For example, consider the \muntz space $E = span(1,t^2,t^6,t^8)$.
The partition associated with this space is given by $\lambda = (5,3,1)$.
To compute its associated Bernstein-Chebyshev basis, we can make the dimension elevation 
$E = span(1,t^2,t^6,t^8) \subset F = span(1,t^2,t^4,t^6,t^8)$.
As the Chebyshev-Bernstein basis of $F$ over an interval $[a,b]$ is known 
in terms the classical Bernstein basis over the interval $[a^2,b^2]$, 
we can use Theorem \ref{elevationtheorem} to find the Chebyshev-Bernstein 
basis of the space $F$ in a single iteration. In principle, we can use this trick 
to compute the Chebyshev-Bernstein basis of any \muntz space whose components
are a ``reparametrization'', in the sense of (\ref{reparametrizationfunction}),
of an already studied \muntz space.  
\end{remark} 

In the following, we will show that there is a particularly convenient choice of a 
sequence of nested \muntz spaces in which an inductive argument along the sequence 
will give us the explicit expression of the Chebyshev-Bernstein basis.    
\begin{definition}
Let $\lambda$ be a partition of length at most $n$. We define the border 
complement $\eta$ of the partition $\lambda$ as the partition obtained by 
removing the first column of $\lambda$. In other word, if $\lambda$ is given 
by $\lambda = (\lambda_1,\lambda_2,...,\lambda_s,0,..0)$ 
where $\lambda_i \geq 1$ for $i=1,...,s$, then its border complement 
is given by $\eta = (\lambda_1-1,\lambda_2-1,...,\lambda_s-1,0,...,0).$
\end{definition}
It is clear from proposition \ref{elevationproposition} 
that if $\lambda$ is a partition of length at  most $n$ 
and $\eta$ its border complement then $\eta$ is a dimension elevation 
partition of $\lambda$ i.e.,  
$\mathcal{E}_{\lambda}(n) \subset \mathcal{E}_{\eta}(n+1).$ 
A first hint of the usefulness of this choice is the following combinatorial 
lemma 
\begin{lemma}\label{lemma2}
Let $\lambda$ be a non-empty partition of length at most $n$ and let 
$\eta$ be it border complement. Then, we have 
$$
{\binom{\eta}{\lambda}}_{\hskip -0.1cm {n}} = \frac{f_{\lambda}(n+1) f_{\eta^{(0)}}(n+1)}
{f_{\eta}(n+2) f_{\lambda^{(0)}}(n)} = 
\frac{n+1}{h_{\lambda}(1,1)},
$$
where $h_{\lambda}(1,1)$ is the content of the first square
of the partition $\lambda$.
\end{lemma}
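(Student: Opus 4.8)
The plan is to express both sides purely in terms of the first row of $\lambda$, using Lemma~\ref{hooklemma} as the main tool. (Here $h_{\lambda}(1,1)$ denotes, as elsewhere, the hook-length $\lambda_1+\lambda'_1-1$ of the corner box of $\lambda$.) First I would rewrite the left-hand side — using (\ref{factor3}) with $\mu=\eta$ and the fact that $\eta^{(0)}$ is by definition the bottom partition of $\eta$ — as the quotient
$$
{\binom{\eta}{\lambda}}_{\hskip -0.1cm {n}} = \left(\frac{f_{\lambda}(n+1)}{f_{\lambda^{(0)}}(n)}\right)\Big/\left(\frac{f_{\eta}(n+2)}{f_{\eta^{(0)}}(n+1)}\right).
$$
Applying Lemma~\ref{hooklemma} to $\lambda$ with the integer $n$, and then to $\eta$ with the integer $n+1$, turns each factor into a product running only over its first row:
$$
\frac{f_{\lambda}(n+1)}{f_{\lambda^{(0)}}(n)}=\prod_{j=1}^{\lambda_1}\frac{(n+1)+c_{\lambda}(1,j)}{h_{\lambda}(1,j)},\qquad \frac{f_{\eta}(n+2)}{f_{\eta^{(0)}}(n+1)}=\prod_{j=1}^{\eta_1}\frac{(n+2)+c_{\eta}(1,j)}{h_{\eta}(1,j)}.
$$

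The second step is to match the two products factor by factor. Since $\eta$ is obtained from $\lambda$ by deleting the first column, the $j$th column of $\eta$ coincides with the $(j+1)$st column of $\lambda$, so $\eta'_j=\lambda'_{j+1}$ for every $j\geq 1$; combined with $\eta_1=\lambda_1-1$ this gives, straight from the definitions of hook-length and content, that for $j=1,\dots,\lambda_1-1$
$$
h_{\eta}(1,j)=h_{\lambda}(1,j+1),\qquad (n+2)+c_{\eta}(1,j)=n+1+j=(n+1)+c_{\lambda}(1,j+1),
$$
while $(n+1)+c_{\lambda}(1,j)=n+j$ for all $j=1,\dots,\lambda_1$. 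Hence the product attached to $\eta$ is exactly the product attached to $\lambda$ with its $j=1$ factor removed; in the quotient the numerators telescope to $n+1$ and the hook-length denominators telescope to $h_{\lambda}(1,1)$, giving ${\binom{\eta}{\lambda}}_{\hskip -0.1cm {n}}=(n+1)/h_{\lambda}(1,1)$.

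Finally I would dispose of the degenerate case $\lambda=(1^s)$, in which $\eta$ and $\eta^{(0)}$ are both empty: there the product over $\eta$ is empty (equal to $1$, consistently with $f_{\emptyset}\equiv 1$) and the product over $\lambda$ reduces to its single term $(n+1)/h_{\lambda}(1,1)$, so the identity still holds — equivalently, one simply runs the argument above with $\lambda_1=1$. The only step that is not pure bookkeeping is the column-deletion relation $\eta'_j=\lambda'_{j+1}$ and the induced index shift $h_{\eta}(1,j)=h_{\lambda}(1,j+1)$ among the first-row hook-lengths; that is the point I would write out carefully, all the rest being a telescoping of finite products.
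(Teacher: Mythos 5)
Your proof is correct and follows essentially the same route as the paper's: apply Lemma \ref{hooklemma} to $\lambda$ (with $n$) and to its border complement $\eta$ (with $n+1$), then use the first-column-deletion relations $h_{\eta}(1,j)=h_{\lambda}(1,j+1)$ and $c_{\eta}(1,j)=c_{\lambda}(1,j+1)-1$ to cancel all factors except $(n+1)/h_{\lambda}(1,1)$. Your explicit treatment of the degenerate case $\lambda=(1^s)$, where $\eta$ is empty, is a small point the paper glosses over, but it changes nothing in substance.
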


\begin{proof}
From Lemma \ref{hooklemma}, we have 
$$
\frac{f_{\lambda}(n+1)}{f_{\lambda^{(0)}}(n)} = 
\prod_{j=1}^{\lambda_1} \frac{(n+1) + c_{\lambda}(1,j)}{h_{\lambda}(1,j)} 
$$ 
and 
$$
\frac{f_{\eta}(n+2)}{f_{\eta^{(0)}}(n+1)} = 
\prod_{j=1}^{\lambda_1-1} \frac{(n+2) + c_{\eta}(1,j)}{h_{\eta}(1,j)}.
$$
Moreover, from the definition of $\eta$, we have for $j=1,..,\lambda_1-1$ 
$$
c_{\eta}(1,j) = c_{\lambda}(1,j+1)-1 
\quad \textnormal{and} \quad
h_{\eta}(1,j) = h_{\lambda}(1,j+1).
$$
Using this extra information in the computation 
leads to a proof of the lemma.
\end{proof} 

With a border complement partition as a dimension elevation partition, 
Theorem \ref{elevationtheorem} takes the simpler form 

\begin{proposition}\label{elevationproposition2}
Let $\lambda$ be a partition of length at most $n$, and let 
$\eta$ be its border complement. Then, the Chebyshev-Bernstein basis 
associated with $\lambda$ and $\eta$ over an interval $[a,b]$ are related by   
$$
B_{k,\lambda}^n(t) =
\frac{(n+1-k)a}{h_{\lambda}(1,1)} \Gamma_{\lambda}^{\eta}(n,k)  B_{k,\eta}^{n+1}(t) 
+\frac{(k+1)b}{h_{\lambda}(1,1)} \Delta_{\lambda}^{\eta}(n,k) 
B_{k+1,\eta}^{n+1}(t).
$$
\end{proposition}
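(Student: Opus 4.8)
The plan is to obtain Proposition~\ref{elevationproposition2} as the specialization $\mu=\eta$ of Theorem~\ref{elevationtheorem}, so that almost everything has already been done and only two small computations remain. First I would check that the border complement $\eta$ really is a dimension elevation partition of $\lambda$, so that Theorem~\ref{elevationtheorem} is applicable to the pair $(\lambda,\eta)$. Writing $\lambda=(\lambda_1,\ldots,\lambda_s,0,\ldots,0)$ with $\lambda_s\geq 1$, the choice $s=l(\lambda)$ and $\rho=0$ in the family~(\ref{elevation2}) of Proposition~\ref{elevationproposition} produces exactly $(\lambda_1-1,\ldots,\lambda_s-1,0,\ldots,0)=\eta$ (the trailing zeros of $\lambda$ are left untouched), hence $\mathcal{E}_{\lambda}(n)\subset\mathcal{E}_{\eta}(n+1)$ and Theorem~\ref{elevationtheorem} may be invoked with $\mu=\eta$; the borderline case $\lambda=(1^s)$, where $\eta$ is the empty partition and $\mathcal{E}_{\eta}(n+1)$ is the polynomial space, is covered as well.

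Next I would identify the two quantities that change when $\mu$ is set equal to $\eta$. The exponent $\rho$ in Theorem~\ref{elevationtheorem} is $\rho=\lambda_1-\mu_1$; since $\lambda$ is non-empty we have $\lambda_1\geq 1$ and $\eta_1=\lambda_1-1$, so $\rho=1$ and the powers $a^{\rho}$, $b^{\rho}$ become simply $a$ and $b$. The combinatorial prefactor ${\binom{\mu}{\lambda}}_{\hskip -0.1cm {n}}$ is then evaluated by Lemma~\ref{lemma2}, which yields ${\binom{\eta}{\lambda}}_{\hskip -0.1cm {n}}=(n+1)/h_{\lambda}(1,1)$. The $\Gamma$- and $\Delta$-factors appearing in Theorem~\ref{elevationtheorem} are, by Definitions~(\ref{factor1})--(\ref{factor2}), already $\Gamma_{\lambda}^{\eta}(n,k)$ and $\Delta_{\lambda}^{\eta}(n,k)$ once $\mu=\eta$, so they require no further manipulation.

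Finally, substituting $\rho=1$ and ${\binom{\eta}{\lambda}}_{\hskip -0.1cm {n}}=(n+1)/h_{\lambda}(1,1)$ into the coefficients $\frac{n+1-k}{n+1}{\binom{\mu}{\lambda}}_{\hskip -0.1cm n}a^{\rho}$ and $\frac{k+1}{n+1}{\binom{\mu}{\lambda}}_{\hskip -0.1cm n}b^{\rho}$ of Theorem~\ref{elevationtheorem}, the factor $(n+1)$ cancels and the two coefficients collapse to $\frac{(n+1-k)a}{h_{\lambda}(1,1)}$ and $\frac{(k+1)b}{h_{\lambda}(1,1)}$, which is precisely the asserted recurrence. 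The whole argument is thus essentially bookkeeping; the only genuine combinatorics — the simplification of the ratio of hook-length counts — is concentrated in Lemma~\ref{lemma2} and has already been established, so I do not anticipate any real obstacle here.
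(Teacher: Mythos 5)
Your proposal is correct and is exactly the route the paper takes: the paper presents Proposition \ref{elevationproposition2} as the specialization $\mu=\eta$ of Theorem \ref{elevationtheorem}, with $\rho=\lambda_1-\eta_1=1$ and the prefactor ${\binom{\eta}{\lambda}}_{n}=(n+1)/h_{\lambda}(1,1)$ supplied by Lemma \ref{lemma2}. Your verification that $\eta$ is a dimension elevation partition via case (\ref{elevation2}) of Proposition \ref{elevationproposition} matches the paper's (unstated but implied) justification, so there is nothing to add.
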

  
We need the following proposition and the subsequent corollary in order 
to give a proof of the main Theorem \ref{bernsteintheorem}
\begin{proposition}\label{propositionfrommuntz}
Let $\lambda$ be a partition of length at most $n$ and $\mu$ 
it border complement.  
Then, for any real numbers $U=(u_1,...,u_{n-1})$, and 
real numbers $x$ and $y$, we have
$$
S_{\mu^{(0)}}(U,x) S_{\lambda}(U,y) - S_{\mu^{(0)}}(U,y) S_{\lambda}(U,x) =
(y-x) S_{\mu}(U,x,y) S_{\lambda^{(0)}}(U),
$$
where $\lambda^{(0)}$ (resp. $\mu^{(0)}$) the bottom partition of $\lambda$ (resp. $\mu$).
\end{proposition}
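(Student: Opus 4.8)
The plan is to recognise the claimed identity as a relabelled instance of Proposition~\ref{condensationmuntztableau}. I take $\lambda$ non-empty (when $\lambda=\emptyset$ the statement degenerates, and only non-empty partitions occur in the descent construction that uses it) and write $s=l(\lambda)$, so $\lambda=(\lambda_1,\dots,\lambda_s,0,\dots,0)$ with $\lambda_s\geq 1$; since both sides are polynomials in $u_1,\dots,u_{n-1},x,y$ it suffices to treat the case where these $n+1$ numbers are pairwise distinct.

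The key is a purely combinatorial observation. By definition of the border complement, $\mu_i=\lambda_i-1$ for $i\leq s$ and $\mu_i=0$ for $i>s$, so comparing with the construction of the \muntz tableau, the $s$-th member of the \muntz tableau of $\mu$ is
$$\mu^{(s)}=(\mu_1+1,\dots,\mu_s+1,\mu_{s+2},\dots,\mu_n)=(\lambda_1,\dots,\lambda_s,0,\dots,0)=\lambda$$
(and likewise $\mu^{(n)}=\lambda$ when $s=n$). Hence the bottom partition of $\mu^{(s)}$ is $\lambda^{(0)}$, while $\mu^{(0)}=(\mu_2,\dots,\mu_n)$ is the bottom partition of $\mu$. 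Applying Proposition~\ref{condensationmuntztableau} with base partition $\mu$ (of length at most $n$) and index $k=s$ therefore yields $S_{\mu^{(0)}}(U,x)\,S_{\mu^{(s)}}(U,y)-S_{\mu^{(0)}}(U,y)\,S_{\mu^{(s)}}(U,x)=(y-x)\,S_{\lambda^{(0)}}(U)\,S_{\mu}(U,x,y)$, which is precisely the asserted formula once $\mu^{(s)}=\lambda$ is substituted.

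The only gap is that Proposition~\ref{condensationmuntztableau} is stated for $k\in\{1,\dots,n-1\}$, while $s=l(\lambda)$ may equal $n$ (for instance for $\lambda=(1^n)$, whose border complement is empty). This is patched by noting that the proof of Proposition~\ref{condensationmuntztableau} works verbatim for $k=n$: from the matrix $A=\big(x_i^{\mu_j+(n+1)-j}\big)_{1\leq i,j\leq n+1}$ with $x_1=x$, $x_i=u_{i-1}$ for $2\leq i\leq n$, $x_{n+1}=y$ and $\mu_{n+1}=0$, moving the first column to the last place and the last column to the first and applying Dodgson condensation~(\ref{condensationformulas}), the six corner minors evaluate — up to a common Vandermonde factor and a sign — to $S_{\mu}(U,x,y)$, $S_{\lambda^{(0)}}(U)$, $S_{\lambda}(U,x)$, $S_{\lambda}(U,y)$, $S_{\mu^{(0)}}(U,x)$, $S_{\mu^{(0)}}(U,y)$, arranged exactly in the pattern of the condensation identity. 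In fact one could run this argument directly for all $s\leq n$ and bypass Proposition~\ref{condensationmuntztableau} altogether; the only mildly delicate point, just as in the proofs of Propositions~\ref{condensationproposition} and~\ref{condensationmuntztableau}, is checking that the Vandermonde and monomial prefactors of the six minors cancel so that exactly the factor $(y-x)$ survives on the right — and this is forced, since the condensation identity holds in any case.
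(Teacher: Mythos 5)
Your proof is correct and follows essentially the same route as the paper: both identify $\lambda$ as the member $\mu^{(s)}$, $s=l(\lambda)$, of the \muntz tableau of its border complement $\mu$ and then invoke the Dodgson-condensation identity of Proposition~\ref{condensationmuntztableau}. The only divergence is the case $l(\lambda)=n$, which the paper treats separately by the factorization (\ref{schurplusones}) so as to reduce to Proposition~\ref{condensationproposition}, whereas you extend Proposition~\ref{condensationmuntztableau} to the index $k=n$ by rerunning its condensation argument --- both are valid, and your explicit exclusion of $\lambda=\emptyset$ (where the identity indeed fails) makes precise a hypothesis the paper leaves implicit.
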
    

\begin{proof}
Let us first assume that the partition $\lambda$ is of exact length $n$. 
Using (\ref{schurplusones}), we have 
$$
S_{\lambda}(U,y) = y \left( \prod_{i=1}^{n-1} u_i \right) S_{\mu}(U,y), \quad
S_{\lambda}(U,x) = x \left(\prod_{i=1}^{n-1} u_i \right) S_{\mu}(U,x),
$$
and
$$
S_{\lambda^{(0)}}(U) = \left(\prod_{i=1}^{n-1} u_i \right) S_{\mu^{(0)}}(U).
$$
In this case the statement of the proposition is nothing by Proposition 
\ref{condensationproposition}. Now, let us assume that $\lambda$ 
is of exact length $k < n$ i.e., $\lambda = (\lambda_1,...,\lambda_k,0,...,0)$
with $\lambda_k \geq 1$.
Consider the \muntz tableau $(\mu^{(0)},\mu^{(1)},...,\mu^{(n)})$ associated 
with the partition $\mu$. Then, we have, $\mu^{(k)} = \lambda$. Therefore, applying
Proposition \ref{condensationmuntztableau} 
to the partition $\mu$ leads to a proof of the proposition. 
\end{proof}

\begin{corollary}\label{corollarymain}
Let $\lambda= (\lambda_1,\lambda_2,...,\lambda_n)$ be a partition of length at most $n$ and $\mu$ 
it border complement.  
Then, for any positive real numbers $U=(u_1,...,u_{n-1})$, and 
positive real numbers $a, b$ and $t$, we have
$$
a(b-t) t^{\lambda_1-1} S_{\mu}(U,a,\frac{ab}{t}) S_{\lambda}(U,b) +
b(t-a) t^{\lambda_1-1} S_{\mu}(U,b,\frac{ab}{t}) S_{\lambda}(U,a) = 
$$
$$
(b-a) t^{\lambda_1} S_{\mu}(U,a,b) S_{\lambda}(U,\frac{ab}{t}).
$$
\end{corollary}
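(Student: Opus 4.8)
The plan is to split the argument into a normalization step that removes all the $t$-powers, followed by a short deduction from Proposition \ref{propositionfrommuntz}.

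\textbf{Step 1 (reduce to a homogeneous relation).} Since $t>0$, I would first divide the asserted equality by $t^{\lambda_1}$ and set $z:=\frac{ab}{t}$, which is again a positive real. Using $\frac{a(b-t)}{t}=\frac{ab}{t}-a=z-a$ and $\frac{b(t-a)}{t}=b-\frac{ab}{t}=b-z$, and noting that $S_{\mu}(U,a,\frac{ab}{t})=S_{\mu}(U,a,z)$ and likewise for the other evaluations, the Corollary becomes equivalent to the identity
\begin{equation}\label{cormainstar}
(z-a)S_{\mu}(U,a,z)S_{\lambda}(U,b)+(b-z)S_{\mu}(U,b,z)S_{\lambda}(U,a)=(b-a)S_{\mu}(U,a,b)S_{\lambda}(U,z).
\end{equation}
As $z$ runs through $]0,+\infty[$ while $t$ does, it suffices to prove (\ref{cormainstar}) for all positive $a,b,z$ and $U$; since both sides are polynomials one may in fact treat $a,b,z,u_1,\dots,u_{n-1}$ as indeterminates. (The degenerate case $\lambda=\emptyset$, where (\ref{cormainstar}) reads $(z-a)+(b-z)=b-a$, is trivial and can be dismissed separately.)

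\textbf{Step 2 (deduce (\ref{cormainstar}) from Proposition \ref{propositionfrommuntz}).} I would multiply (\ref{cormainstar}) through by $S_{\lambda^{(0)}}(U)$ — a nonzero polynomial in $U$, so this is reversible in the field of fractions — and then invoke Proposition \ref{propositionfrommuntz} three times, for the pairs $(x,y)=(z,a)$, $(x,y)=(b,z)$ and $(x,y)=(a,b)$, using the symmetry of the Schur functions to identify $S_{\mu}(U,z,a)=S_{\mu}(U,a,z)$ and so on. Multiplying the first resulting relation by $S_{\lambda}(U,b)$, the second by $S_{\lambda}(U,a)$ and the third by $S_{\lambda}(U,z)$, their left-hand sides become exactly the three terms occurring in $S_{\lambda^{(0)}}(U)\cdot(\ref{cormainstar})$, while each right-hand side is a difference of two monomials of the shape $S_{\mu^{(0)}}(U,\cdot)\,S_{\lambda}(U,\cdot)\,S_{\lambda}(U,\cdot)$. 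Forming (first)$+$(second)$-$(third), the three monomials $S_{\mu^{(0)}}(U,a)S_{\lambda}(U,b)S_{\lambda}(U,z)$, $S_{\mu^{(0)}}(U,z)S_{\lambda}(U,a)S_{\lambda}(U,b)$ and $S_{\mu^{(0)}}(U,b)S_{\lambda}(U,a)S_{\lambda}(U,z)$ each occur exactly twice with opposite signs, so the sum vanishes; cancelling the common factor $S_{\lambda^{(0)}}(U)$ then yields (\ref{cormainstar}), and hence the Corollary.

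\textbf{Expected obstacle.} The only genuine insight is the substitution $z=ab/t$ in Step 1: it is what converts a $t$-dependent, lopsided-looking statement into the homogeneous three-term relation (\ref{cormainstar}), after which everything is mechanical. The remaining care is purely sign bookkeeping in Step 2 — choosing which of the three instances of Proposition \ref{propositionfrommuntz} to pair with which Schur factor so that the six monomials cancel in pairs. As an alternative to Step 2, one could observe that (\ref{cormainstar}) is a three-term Plücker relation for Schur functions and derive it directly by the condensation/generalized-Vandermonde technique already used for Propositions \ref{condensationproposition} and \ref{propositionfrommuntz}, now applied to an $(n+2)\times(n+2)$ matrix built from $a,u_1,\dots,u_{n-1},b,z$.
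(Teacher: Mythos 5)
Your proof is correct and follows essentially the same route as the paper: both arguments invoke Proposition \ref{propositionfrommuntz} three times, with second arguments $ab/t$ paired with $a$, $ab/t$ paired with $b$, and $a$ paired with $b$, and combine the resulting two-term identities so that the six monomials cancel. Your substitution $z=ab/t$ and division by $t^{\lambda_1}$ is only a cosmetic normalization of the computation the paper carries out with the $t$-powers left in place.
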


\begin{proof}
Applying Proposition \ref{propositionfrommuntz},
with $x=a$ and $y = ab/t$ and multiplying both sides 
by $t^{\lambda_1}$ give
$$
a(b-t) t^{\lambda_1 - 1} S_{\mu}(U,a,ab/t) S_{\lambda^{(0)}}(U) =
$$
$$
t^{\lambda_1} \left( S_{\mu^{(0)}}(U,a) S_{\lambda}(U,ab/t) - 
S_{\mu^{(0)}}(U,ab/t) S_{\lambda}(U,a) \right).
$$ 
Therefore, we have 
$$
a(b-t) t^{\lambda_1 - 1} S_{\mu}(U,a,ab/t) S_{\lambda}(U,b) =
$$
\begin{equation}\label{localequation1}
t^{\lambda_1} \frac{S_{\lambda}(U,b)}{S_{\lambda^{(0)}}(U)}
\left( S_{\mu^{(0)}}(U,a) S_{\lambda}(U,ab/t) - 
S_{\mu^{(0)}}(U,ab/t) S_{\lambda}(U,a) \right).
\end{equation}
Similarly, by applying Proposition \ref{propositionfrommuntz}
with $x=b$ and $y = ab/t$, we have 
$$
b(t-a) t^{\lambda_1 - 1} S_{\mu}(U,b,ab/t) S_{\lambda}(U,a) =
$$
\begin{equation}\label{localequation2}
t^{\lambda_1} \frac{S_{\lambda}(U,a)}{S_{\lambda^{(0)}}(U)}
\left(  S_{\mu^{(0)}}(U,ab/t) S_{\lambda}(U,b)-
S_{\mu^{(0)}}(U,b) S_{\lambda}(U,ab/t) \right).
\end{equation}
Summing up the expressions (\ref{localequation1}) and 
(\ref{localequation2}) shows that the right hand 
side of the equation in the corollary is given by 
$$
\frac{t^{\lambda_1} S_{\lambda}(U,ab/t)}{S_{\lambda^{(0)}}(U)} 
\left( S_{\lambda}(U,b) S_{\mu^{(0)}} (U,a) -
S_{\lambda}(U,a) S_{\mu^{(0)}}(U,b) \right).
$$
Using again Proposition \ref{propositionfrommuntz},
to the quantity between the parenthesis in the last equation, 
with $x = a$ and $y = b$, leads to the desired formulas
\end{proof} 

\bigskip
{\noindent{{\bf Proof of the main Theorem \ref{bernsteintheorem}:}}} 
\begin{proof}
We will proceed by induction on the number of boxes in the partition
$\lambda$. For an empty partition, the Chebyshev-Bernstein basis is 
given by the classical Bernstein basis which is consistent 
with the formula given by the Theorem.
Let us assume that the Theorem is true for any 
partition with less than $m$ boxes. For a given partition $\lambda$ with 
$m$ boxes, let us denote by $\mu$ its border complement partition.
Then, necessary the number of boxes in $\mu$ is less than $m$. 
By Proposition \ref{elevationproposition2}, we have 
\begin{equation}\label{localmain}
\begin{split}
B^{n}_{k,\lambda}(t) & = \frac{(n+1-k)a}{h_\lambda(1,1)}
\frac{S_{\lambda^{(0)}}(a^{n-k},b^{k}) S_{\mu}(a^{n+2-k},b^{k})}
{S_{\mu^{(0)}}(a^{n+1-k},b^{k}) S_{\lambda}(a^{n+1-k},b^{k})}
B^{n+1}_{k,\mu}(t) +  \\
& \frac{(k+1)b}{h_\lambda(1,1)}
\frac{S_{\lambda^{(0)}}(a^{n-k},b^{k}) S_{\mu}(a^{n-k},b^{k+2})}
{S_{\mu^{(0)}}(a^{n-k},b^{k+1}) S_{\lambda}(a^{n-k},b^{k+1})}
B^{n+1}_{k+1,\mu}(t).
\end{split}
\end{equation}
Using the induction hypothesis (\ref{bernstein}) on the Chebyshev-Bernstein functions 
$B^{n+1}_{k+1,\mu}(t)$ and $B^{n+1}_{k,\mu}(t)$ and carrying out all the obvious  
simplifications as well as using Lemma \ref{lemma2}, we find that 
the first term in (\ref{localmain}) is given by 
\begin{equation}
\frac{f_{\lambda}(n+1)}{f_{\lambda^{(0)}}(n)}
\frac{a(b-t)}{b-a} B^{n}_{k}(t) t^{\lambda_1-1}
\frac{S_{\lambda^{(0)}}(a^{n-k},b^{k}) S_{\mu}(a^{n+1-k},b^{k},ab/t)}
{S_{\lambda}(a^{n+1-k},b^{k}) S_{\mu}(a^{n+1-k},b^{k+1})},
\end{equation}
while the second term in (\ref{localmain}) is given by
\begin{equation}
\frac{f_{\lambda}(n+1)}{f_{\lambda^{(0)}}(n)}
\frac{b(t-a)}{b-a} B^{n}_{k}(t) t^{\lambda_1-1}
\frac{S_{\lambda^{(0)}}(a^{n-k},b^{k}) S_{\mu}(a^{n-k},b^{k+1},ab/t)}
{S_{\lambda}(a^{n-k},b^{k+1}) S_{\mu}(a^{n+1-k},b^{k+1})}.
\end{equation}
Summing the two last equations leads to 
\begin{equation}\label{localbernstein}
B_{k,\lambda}^n(t) = 
\frac{f_{\lambda}(n+1)}{f_\lambda^{(0)}(n)} B^{n}_{k}(t) 
\frac{S_{\lambda^{(0)}}(a^{n-k},b^{k})}
{S_{\mu}(a^{n+1-k},b^{k+1})} \mathfrak{U}(a,b,t),
\end{equation}
where $\mathfrak{U}$ is given by
\begin{equation*}
\begin{split}
\mathfrak{U}(a,b,t) = &   
\frac{t^{\lambda_1 - 1}}{b-a} ( 
\frac{a(b-t) S_{\mu}(a^{n+1-k},b^{k},ab/t) 
S_{\lambda}(a^{n-k},b^{k+1})}{S_{\lambda}(a^{n+1-k},b^{k}) 
S_{\lambda}(a^{n-k},b^{k+1})} + \\ 
& \frac{b(t-a) S_{\mu}(a^{n-k},b^{k+1},ab/t) 
S_{\lambda}(a^{n+1-k},b^{k})}{S_{\lambda}(a^{n+1-k},b^{k}) 
S_{\lambda}(a^{n-k},b^{k+1})} ).
\end{split}
\end{equation*}
Using Corollary \ref{corollarymain}, with $U = (a^{n-k},b^{k})$ (note that here we view $\lambda$ as a partition
of length at most $(n+1)$ to be able to take $U$ with $n$ components) shows that 
\begin{equation}
\mathfrak{U}(a,b,t) =
\frac{ t^{\lambda_1} S_{\mu}(a^{n-k+1},b^{k+1}) 
S_{\lambda}(a^{n-k},b^{k},ab/t)}{S_{\lambda}(a^{n+1-k},b^{k}) 
S_{\lambda}(a^{n-k},b^{k+1})}.  
\end{equation}  
Inserting the last term into equation (\ref{localbernstein}) result 
in the proof of the Theorem.
\end{proof}

\begin{remark}
One we have guessed the explicit expression of the Chebyshev-Bernstein 
basis, it is in principle, possible to find a simpler proof than the one
given here. For instance, according to the characterization of the Chebyshev-Bernstein 
basis given in \cite{Mazure2} and in view of Proposition \ref{bernsteinderivative},
we need only to show that every element $B^{n}_{k,\lambda}$ expressed in (\ref{bernstein})
is an element of the \muntz space $\mathcal{E}_{\lambda}(n)$. However, the advantage of 
our proof lies in demonstrating the elegant combinatorics beneath the relations of 
Chebyshev-Bernstein bases associated with different partitions.
\end{remark}

\subsection{Dimension elevation process}
Let $\lambda = (\lambda_1,\lambda_2,...,\lambda_n)$ be a partition 
of length at most $n$ and let $\eta = (\eta_1,\eta_2,...,\eta_{n+1})$
be a dimension elevation partition of $\lambda$. 
Consider a $\mathcal{E}_{\lambda}(n)$-function $P$ written 
in the Chebyshev-Bernstein bases associated with the partitions
$\lambda$ and $\eta$ over an interval $[a,b]$ as  
\begin{equation}\label{expansion1}
P(t) = \sum_{k=0}^{n} B_{k,\lambda}^{n}(t) P_{k} =
\sum_{k=0}^{n+1} B_{k,\eta}^{n+1}(t) \tilde{P}_{k}.
\end{equation}
Using Theorem \ref{elevationtheorem} to detect the coefficients of $B_{k,\eta}^{n+1}(t)$
in the expansion (\ref{expansion1}), we readily find 
\begin{figure}
\hskip 2.5 cm
\includegraphics[height=6.cm]{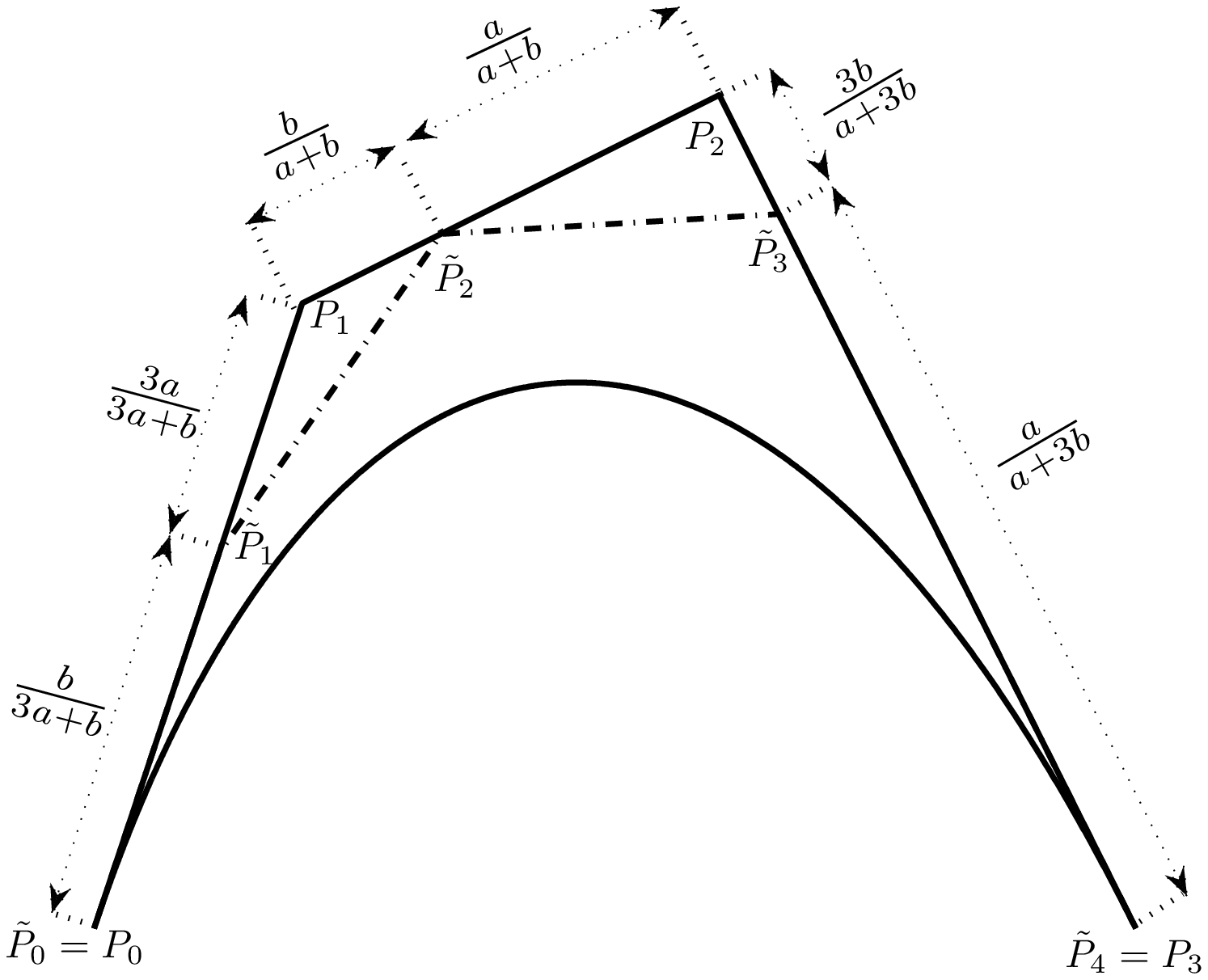}
\caption{The dimension elevation process
$\mathcal{E}_{(1)}(3) = span(1,t^2,t^3,t^4) \subset 
\mathcal{E}_{\emptyset}(4) = span(1,t,t^2,t^3,t^4)$. 
$(P_0,P_1,P_2,P_3)$ are the Chebyshev-B\'ezier points of a $\mathcal{E}_{(1)}(3)$-function,
while $(\tilde{P}_0,\tilde{P}_1,\tilde{P}_2,\tilde{P}_3,\tilde{P}_4)$
are the Chebyshev-B\'ezier points of the same function viewed as a 
$\mathcal{E}_{\emptyset}(4)$-function. 
(see example 11).}
\label{fig:figure1}
\end{figure}
\begin{theorem}\label{elevationtheorem2}
The Chebyshev-B\'ezier points $\tilde{P_k}$ in (\ref{expansion1})
are related to the Chebyshev-B\'ezier points $P_{k}$ by the relations 
$$
\tilde{P}_0 = P_0, \quad \tilde{P}_{n+1} = P_{n},
$$
and for $k=1,2,...,n$
\begin{equation}\label{elevationequation}
\tilde{P}_k = \rho_{[\lambda,\eta]}(n,k-1) \; P_{k-1} + \xi_{[\lambda,\eta]}(n,k) \; P_{k},
\end{equation}
where $\xi_{[\lambda,\eta]}(n,k)$  and $\rho_{[\lambda,\eta]}(n,k)$ are given by 
$$
\xi_{[\lambda,\eta]}(n,k) = \frac{(n+1-k) a^{\lambda_1-\eta_1}}{n+1}
{\binom{\eta}{\lambda}}_{\hskip -0.1cm {n}} \Gamma_{\lambda}^{\eta}(n,k)
$$
and 
$$
\rho_{[\lambda,\eta]}(n,k) = \frac{(k+1) b^{\lambda_1-\eta_1}}{n+1}
{\binom{\eta}{\lambda}}_{\hskip -0.1cm {n}} \Delta_{\lambda}^{\eta}(n,k),
$$
where $\Gamma_{\lambda}^{\eta}(n,k)$ and $\Delta_{\lambda}^{\eta}(n,k)$ are
defined in (\ref{factor1}) and (\ref{factor2}).
\end{theorem}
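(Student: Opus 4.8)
The plan is to obtain the points $\tilde P_k$ by reading off the $\eta$-Chebyshev-Bernstein coefficients of $P$ after substituting the dimension elevation relation of Theorem \ref{elevationtheorem}. With the factors $\xi_{[\lambda,\eta]}(n,k)$ and $\rho_{[\lambda,\eta]}(n,k)$ as defined in the statement, Theorem \ref{elevationtheorem} applied to the dimension elevation partition $\mu = \eta$ (for which $\rho = \lambda_1 - \eta_1$) is exactly the identity
$$
B^{n}_{k,\lambda}(t) = \xi_{[\lambda,\eta]}(n,k)\, B^{n+1}_{k,\eta}(t) + \rho_{[\lambda,\eta]}(n,k)\, B^{n+1}_{k+1,\eta}(t), \qquad k = 0, \dots, n.
$$
First I would insert this into the first expansion $P(t) = \sum_{k=0}^{n} B^{n}_{k,\lambda}(t) P_k$ of (\ref{expansion1}), which gives
$$
P(t) = \sum_{k=0}^{n} \xi_{[\lambda,\eta]}(n,k)\, P_k\, B^{n+1}_{k,\eta}(t) + \sum_{k=0}^{n} \rho_{[\lambda,\eta]}(n,k)\, P_k\, B^{n+1}_{k+1,\eta}(t).
$$

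Next I would re-index the second sum by $k \mapsto k-1$ so that both sums are expressed over the basis functions $B^{n+1}_{j,\eta}$, $j = 0, \dots, n+1$, and collect like terms: the coefficient of $B^{n+1}_{0,\eta}$ is $\xi_{[\lambda,\eta]}(n,0)\, P_0$, the coefficient of $B^{n+1}_{n+1,\eta}$ is $\rho_{[\lambda,\eta]}(n,n)\, P_n$, and for $j = 1, \dots, n$ the coefficient of $B^{n+1}_{j,\eta}$ is $\xi_{[\lambda,\eta]}(n,j)\, P_j + \rho_{[\lambda,\eta]}(n,j-1)\, P_{j-1}$. Since $(B^{n+1}_{0,\eta}, \dots, B^{n+1}_{n+1,\eta})$ is a basis of $\mathcal{E}_{\eta}(n+1)$, comparison with the second expansion in (\ref{expansion1}) forces $\tilde P_j$ to equal these coefficients; for the interior indices this is precisely (\ref{elevationequation}).

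It then remains to identify the two boundary coefficients with $P_0$ and $P_n$. The shortest route is to invoke the general property of the dimension elevation process recalled in Section 2, namely $\tilde P_0 = P_0$ and $\tilde P_{n+1} = P_n$, valid for any pair $\mathcal{E}_{\lambda}(n) \subset \mathcal{E}_{\eta}(n+1)$. Alternatively one can verify directly that $\xi_{[\lambda,\eta]}(n,0) = 1$ and $\rho_{[\lambda,\eta]}(n,n) = 1$: evaluating $\Gamma^{\eta}_{\lambda}(n,0)$ through the specialization $S_{\nu}(x^m) = f_{\nu}(m)\, x^{|\nu|}$, together with $|\lambda| - |\lambda^{(0)}| = \lambda_1$ and $|\eta| - |\eta^{(0)}| = \eta_1$ and the definition of $\binom{\eta}{\lambda}_{n}$ in (\ref{factor3}), one gets $\Gamma^{\eta}_{\lambda}(n,0) = \binom{\eta}{\lambda}_{n}^{-1}\, a^{\eta_1 - \lambda_1}$, whence $\xi_{[\lambda,\eta]}(n,0) = a^{\lambda_1 - \eta_1}\, \binom{\eta}{\lambda}_{n}\, \Gamma^{\eta}_{\lambda}(n,0) = 1$, and symmetrically for $\rho_{[\lambda,\eta]}(n,n)$ using $\Delta^{\eta}_{\lambda}(n,n)$.

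There is essentially no hard step once Theorem \ref{elevationtheorem} is available: the argument is pure bookkeeping, relying only on the uniqueness of the representation in the $\eta$-Chebyshev-Bernstein basis. The only points deserving care are the re-indexing of the shifted sum --- so that the diagonal factor $\xi_{[\lambda,\eta]}(n,k)$ is matched with $B^{n+1}_{k,\eta}$ while the off-diagonal factor $\rho_{[\lambda,\eta]}(n,k)$ contributes to $B^{n+1}_{k+1,\eta}$ --- and the small verification that the two edge normalizations collapse to $1$, both of which reduce to $S_{\nu}(x^m) = f_{\nu}(m)\, x^{|\nu|}$ and the hook-ratio identities already recorded in Lemmas \ref{hooklemma} and \ref{lemma2}.
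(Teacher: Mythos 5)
Your proposal is correct and follows essentially the same route as the paper, which likewise obtains the result by substituting the relation of Theorem \ref{elevationtheorem} into the expansion (\ref{expansion1}) and reading off the coefficients of $B^{n+1}_{k,\eta}$. Your explicit check that $\xi_{[\lambda,\eta]}(n,0)=\rho_{[\lambda,\eta]}(n,n)=1$ via $S_{\nu}(x^m)=f_{\nu}(m)\,x^{|\nu|}$ is a correct (and welcome) verification of the boundary cases that the paper leaves implicit.
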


\begin{remark}
As the relation (\ref{elevationequation}) is independent of the 
$\mathcal{E}_{\lambda}(n)$-function $P$ and in view of 
(\ref{generalelevation}), we have 
$\rho_{\lambda,\eta}(n,k-1) + \xi_{\lambda,\eta}(n,k) = 1.$
This relation can also be directly proven (with rather great efforts)
using Proposition \ref{propositionfrommuntz}.
\end{remark}

\begin{example}
Let $P$ be a $\mathcal{E}_{(1^r)}(n)$-function. The partition $\eta = (0)$
is a dimension elevation partition of $\lambda$. Therefore,
the function $P$ can be expressed as 
\begin{equation*}
P(t) = \sum_{k=0}^{n} B_{k,(1^{r})}^{n}(t) P_{k} =
\sum_{k=0}^{n+1} B_{k}^{n+1}(t) \tilde{P}_{k}.
\end{equation*}
In this case, we have 
$$
\rho_{[1^{(r)},(0)]}(n,k-1) = \frac{k b}{r} 
\frac{e_{r-1}(a^{n-k+1},b^{k-1})}{e_{r}(a^{n-k+1},b^{k})} 
$$
and 
$$
\xi_{[1^{(r)},(0)]}(n,k) = \frac{(n+1-k) a}{r} 
\frac{e_{r-1}(a^{n-k},b^{k})}{e_{r}(a^{n-k+1},b^{k})}. 
$$
Therefore, from Theorem \ref{elevationtheorem2}, we have
$\tilde{P}_{0} = P_{0}$ and $\tilde{P}_{n+1} = P_{n}$ and for $k=1,...,n$, 
$$
\tilde{P}_k = 
\frac{k b e_{r-1}(a^{n-k+1},b^{k-1})} {r e_{r}(a^{n-k+1},b^{k})} P_{k-1} +
\frac{(n+1-k)a e_{r-1}(a^{n-k},b^{k})}{{r e_{r}(a^{n-k+1},b^{k})} } P_{k}.
$$
The case $r = 1$ provide us with the following simple relationships 
$$
\tilde{P}_k = \frac{kb}{(n+1-k)a + kb} P_{k-1} + \frac{(n+1-k)a}{(n+1-k)a + kb} P_{k}.
$$
Figure \ref{fig:figure1} shows an example of dimension elevation process 
for the case $r = 1$.
\end{example}
\begin{figure}
\hskip 2.5 cm
\includegraphics[height=6.cm]{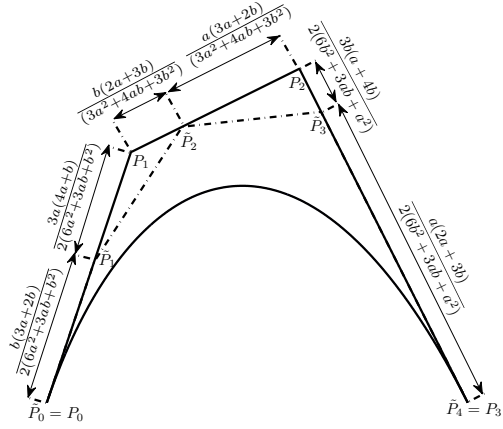}
\caption{The dimension elevation process
$\mathcal{E}_{(1,1)}(3) = span(1,t^3,t^4,t^5) \subset 
\mathcal{E}_{(1)}(4) = span(1,t^2,t^3,t^4,t^5)$. 
$(P_0,P_1,P_2,P_3)$ are the Chebyshev-B\'ezier points of a $\mathcal{E}_{(1,1)}(3)$-function,
while $(\tilde{P}_0,\tilde{P}_1,\tilde{P}_2,\tilde{P}_3,\tilde{P}_4)$
are the Chebyshev-B\'ezier points of the same function viewed as a 
$\mathcal{E}_{(1)}(4)$-function. 
(see example 12).}
\label{fig:figure2}
\end{figure}
\begin{example}
Let us consider the dimension elevation process
$$
\mathcal{E}_{(l)}(n) \subset \mathcal{E}_{(l-1)}(n+1).
$$
Then, if we write 
\begin{equation}
P(t) = \sum_{k=0}^{n} B_{k,(l)}^{n}(t)  P_{k} =
\sum_{k=0}^{n+1} B_{k,(l-1)}^{n+1}(t) \tilde{P}_{k},
\end{equation}
we would have
$$
\rho_{[(l),(l-1)]}(n,k-1) = \frac{k b}{l} 
\frac{h_{l-1}(a^{n-k+1},b^{k+1})}{h_{l}(a^{n-k+1},b^{k})} 
$$
and 
$$
\xi_{[(l),(l-1)]}(n,k) = \frac{(n+1-k) a}{l} 
\frac{h_{l-1}(a^{n+2-k},b^{k})}{h_{l}(a^{n-k+1},b^{k})}. 
$$
Therefore, we have $\tilde{P}_{0} = P_{0}$ and $\tilde{P}_{n+1} = P_{n}$ and for $k=1,...,n$, 
$$
\tilde{P}_k = 
\frac{k b h_{l-1}(a^{n-k+1},b^{k+1})} {l h_{l}(a^{n-k+1},b^{k})} P_{k-1} +
\frac{(n+1-k)a h_{l-1}(a^{n+2-k},b^{k})}{{l h_{l}(a^{n-k+1},b^{k})} } P_{k}.
$$
Figure \ref{fig:figure2} shows an example of the dimension elevation process
for the case $l=2$.
\end{example}
\section{Toward shaping with Young diagram}
A polygon $\mathcal{P} = (P_0,P_1,...,P_n)$ can be viewed as the control 
polygon of a $\mathcal{E}_{\lambda}(n)$-function, where $\lambda$
is a partition of length at most $n$. Therefore, by varying the partition 
$\lambda$, the curve associated with the control polygon will also vary 
accordingly. In such circumstances, the Young diagram can be viewed as 
a shape parameter. It would, therefore, be interesting to study the effect of 
standard operations on a fixed partition $\lambda$, such as adding a box, 
removing a box, adding a row or column and so on, on the shape of the curve.
The problem is rather challenging and we will content ourself, here,
with a simple experimental example. In Figure \ref{fig:figure3}, we show the effect of
adding boxes to the first row of the partition $\lambda = (2,1)$. 
Adding boxes to the first row seems to have the effect of making the curve
more and more far from the control polygon. However, adding the same number 
of boxes to every column seems to have the opposite effect as shown in Figure \ref{fig:figure4}.
\begin{figure}
\includegraphics[height=3.8cm]{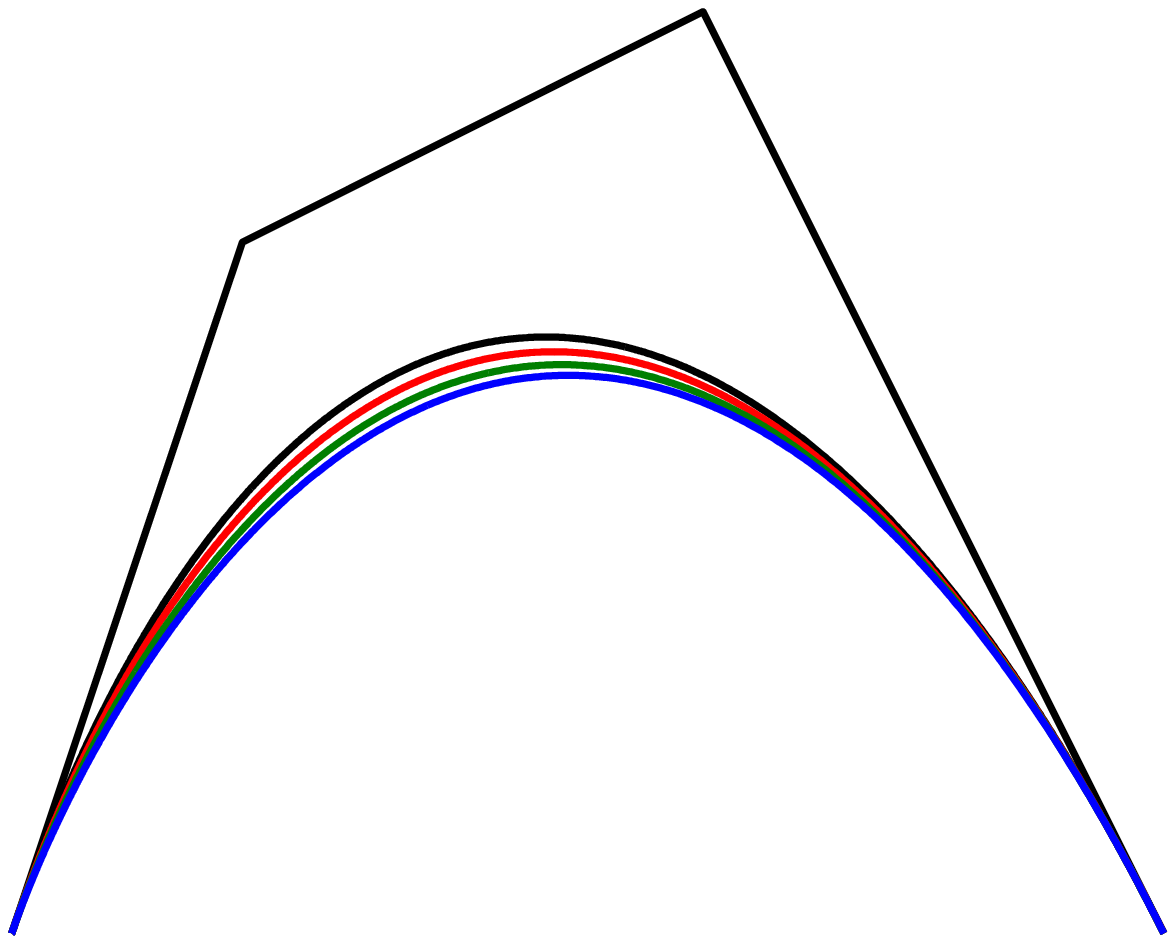}
\hskip 1.8cm
\includegraphics[height=3.6cm]{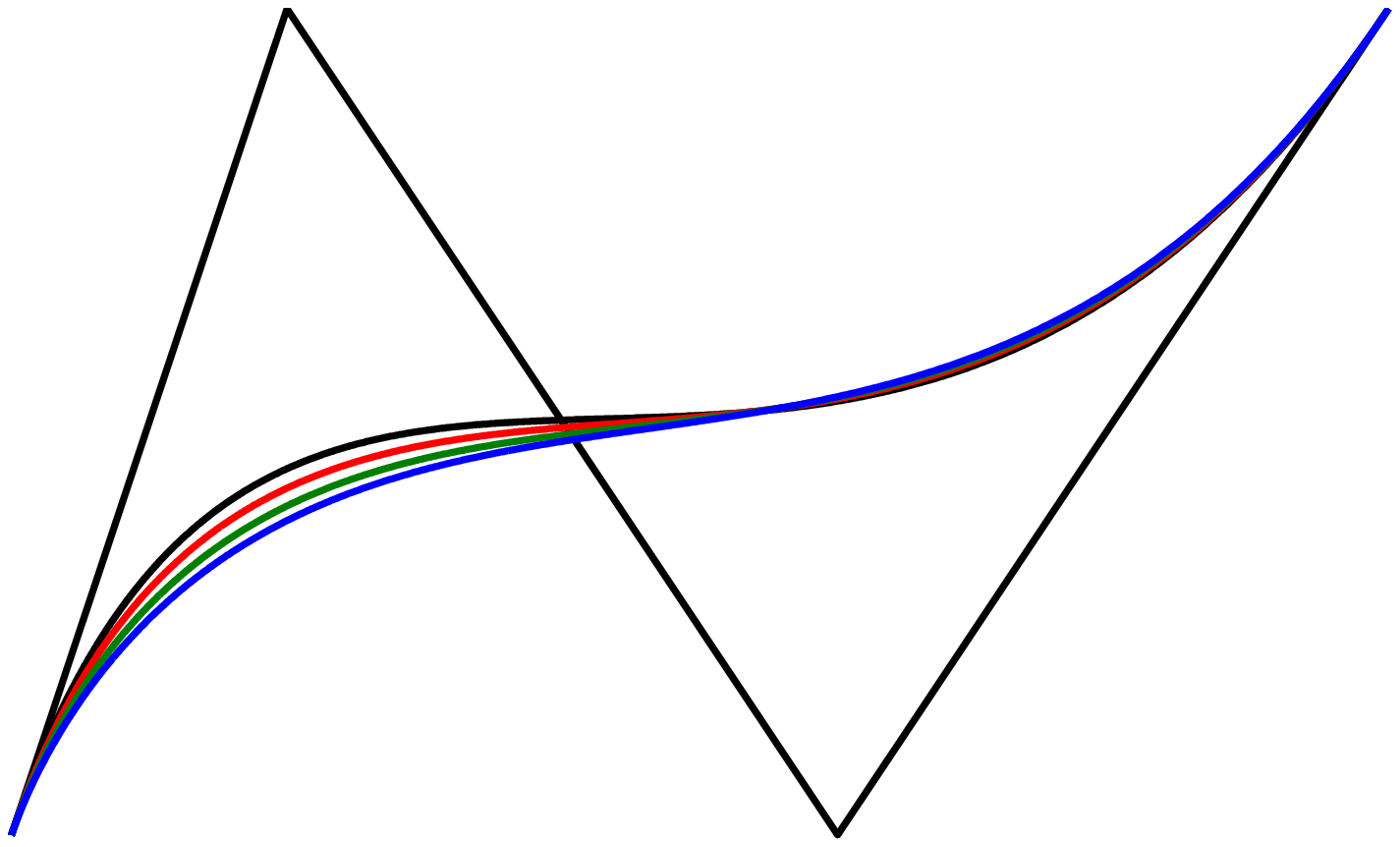}
\caption{The effect of iteratively adding boxes to the first row of the partition 
$\lambda = (2,1)$. The black curve refers to the Chebyshev-B\'ezier curve over 
the interval $[a,b] = [1,4]$ associated with the partition $\lambda$,
the red curve correspond to the partition $(3,1)$, the green curve to the partition
$(4,1)$ and the blue curve to the partition $(5,1)$.} 
\label{fig:figure3}
\end{figure}

We can also define the tensor-product surfaces based on the Chebyshev-Bernstein basis
associated with two different partitions. Namely, 
we can define a surface $\Gamma_{\lambda,\mu}$ by the parametric equation 
\begin{equation}\label{surf}
\Gamma_{\lambda,\mu}(s,t) = \sum_{i,j=1}^{n} B_{i,\lambda}^{n}(t) B_{j,\mu}^{n}(s) P_{ij},
\end{equation}
where $\lambda$ and $\mu$ are partitions of length at most $n$, 
$P_{ij}$ are points in $\mathbb{R}^3$ and $(s,t) \in [a,b] \times [c,d]$.
Figure \ref{fig:figure5} shows an example of surfaces obtained from (\ref{surf}).     

\begin{figure}
\vskip 0.2 cm
\includegraphics[height=3.9cm]{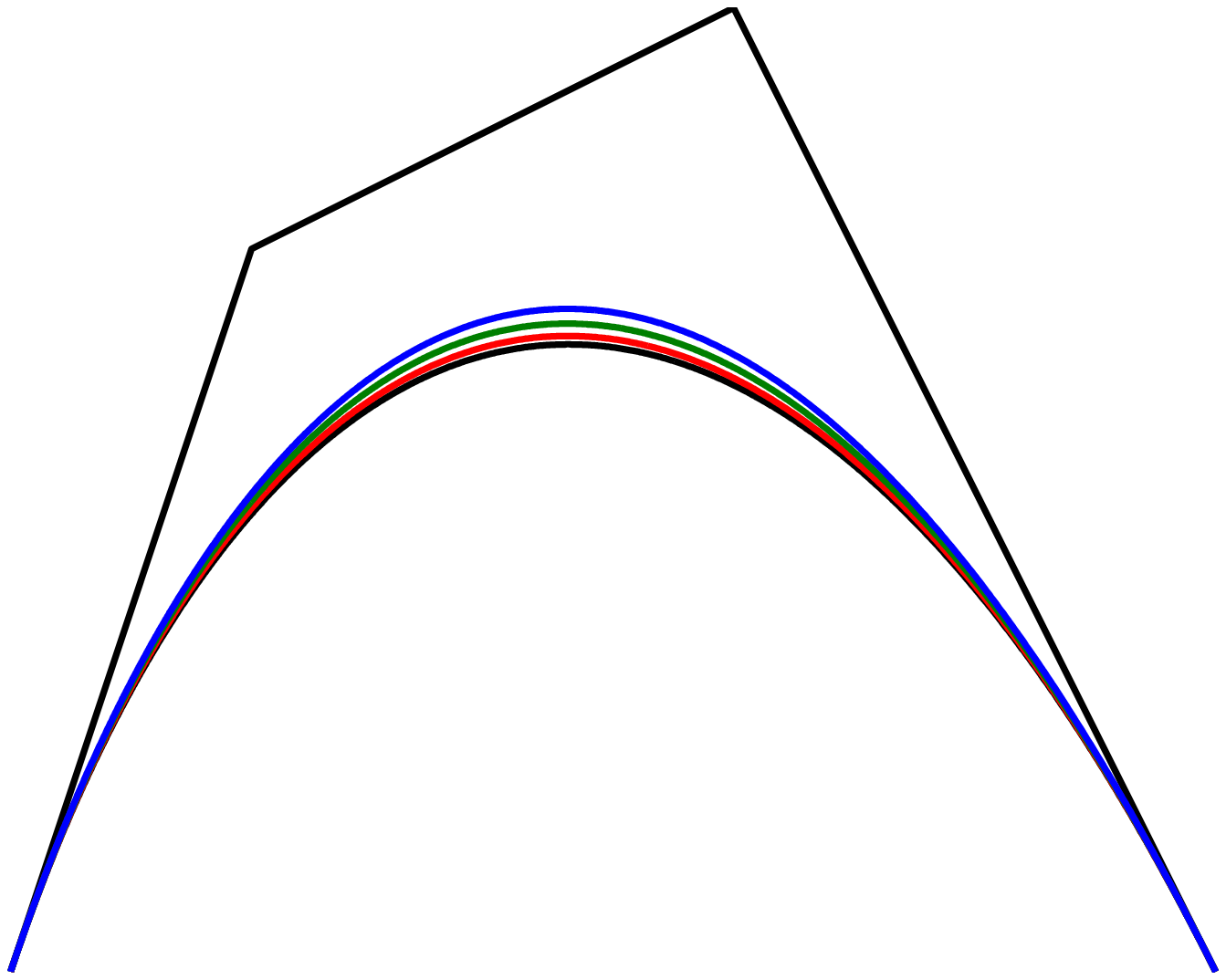}
\hskip 1.cm
\includegraphics[height=3.6cm]{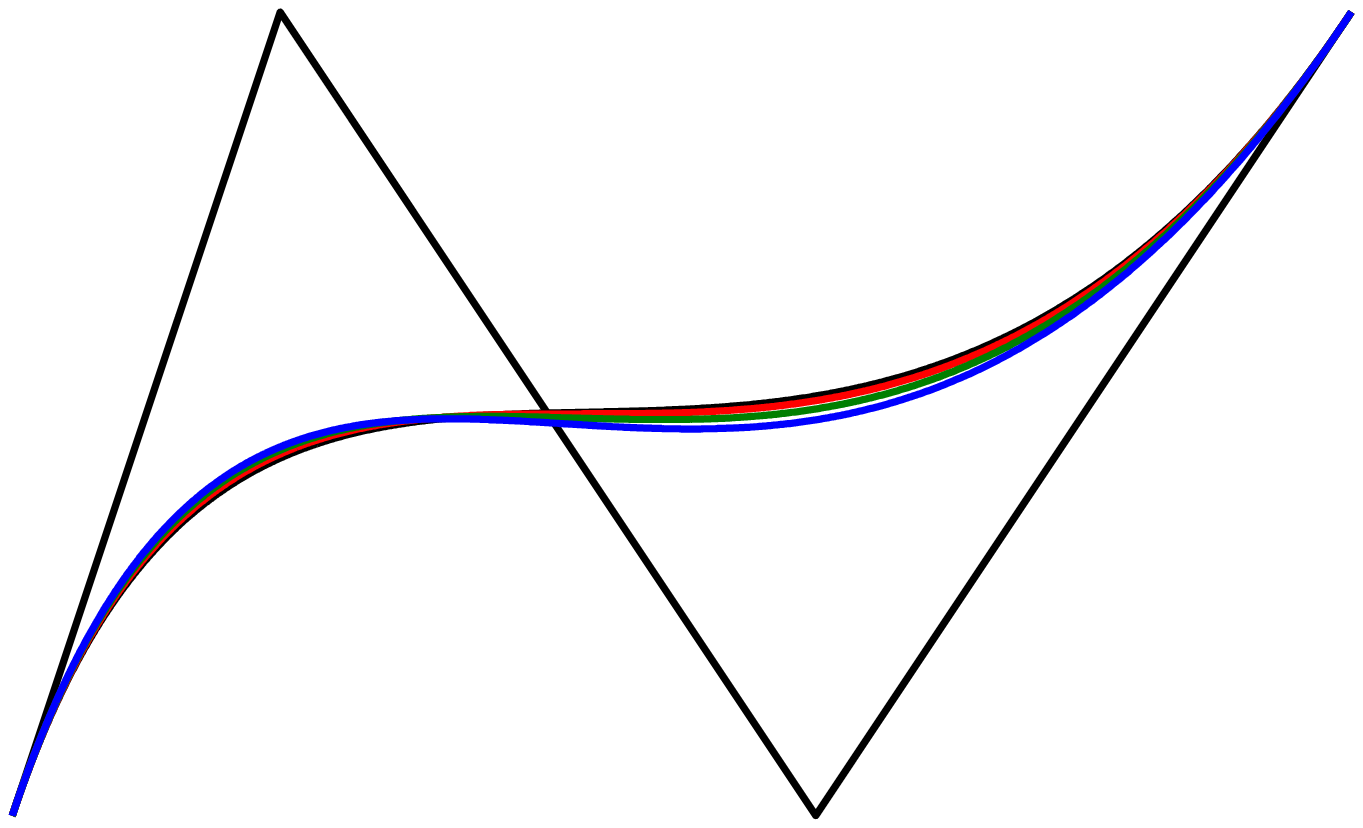}
\caption{The effect of iteratively adding a column to the partition 
$\lambda = (2,1)$. The black curve refers to the Chebyshev-B\'ezier
curve over the interval $[a,b] = [1,4]$ associated with the partition $\lambda$,
the red curve correspond to the partition $(3,2,1)$, the green curve to the partition $(4,3,2)$
and the blue curve to the partition $(5,4,3)$.}
\label{fig:figure4}
\end{figure}

\begin{figure}
\hskip 2 cm
\includegraphics[height=4cm]{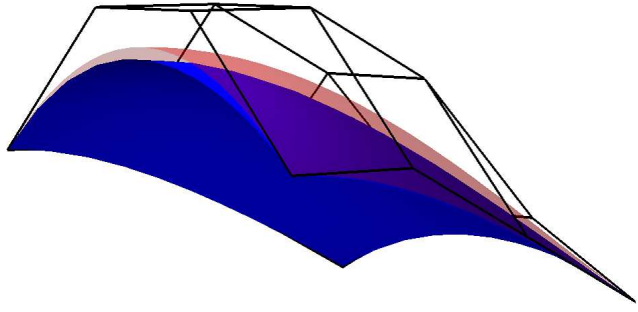}
\caption{Tensor-product surfaces obtained using equation (\ref{surf}): 
the transparent surface is associated with the parameters $\lambda = (2,1)$, $\mu = (1,1)$ and $[a,b]=[c,d]=[3,4]$,
while the blue surface is associated with the parameters $\lambda = (5,1)$, $\mu = (5,1)$ and $[a,b]=[c,d]=[1,6]$}. 
\label{fig:figure5}
\end{figure}
     
\subsection{Derivative the Chebyshev-Bernstein Basis}
Consider a partition $\lambda= (\lambda_1,\lambda_2,...,\lambda_n)$
of length at most $n$ in which we assume that 
\begin{equation}\label{condition1}
\lambda_1 = \lambda_2.
\end{equation}
Under the condition (\ref{condition1}), the derivative of the Chebyshev-Bernstein element 
$B^{n}_{k,\lambda}$ is an element of the Chebyshev space $\mathcal{E}_{\lambda^{(0)}}(n-1)$
where $\lambda^{(0)}$ is the bottom partition of $\lambda$. Therefore, the derivative 
of $B^{n}_{k,\lambda}$ can be written as linear combination of the Chebyshev-Bernstein 
basis of $\mathcal{E}_{\lambda^{(0)}}(n-1)$. Using the vanishing property of the
Chebyshev-Bernstein bases stated in Theorem \ref{mazuretheorem}, we derive as in (\ref{derivativerecurence}) 
the following relationship
\begin{equation}\label{localder}
\frac{d B_{k,\lambda}^n(t)}{dt} = 
\frac{{B_{k,\lambda}^n}^{(k)}(a)}
{{B_{k-1,\lambda^{(0)}}^{n-1}}^{(k-1)}(a)} B_{k-1,\lambda^{(0)}}^{n-1}(t) 
+ \frac{{B_{k,\lambda}^n}^{(n-k)}(b)}{{B_{k,\lambda^{(0)}}^{n-1}}^{(n-k-1)}(b)}
B_{k,\lambda^{(0)}}^{n-1}(t).
\end{equation}
in which we adopt the convention that  
$B_{-1,\lambda^{(0)}}^{n-1} \equiv B_{n,\lambda^{(0)}}^{n-1} \equiv 0$.
Let us denote by $\eta$ the partition $\eta = (\lambda_3,...,\lambda_n)$. 
The partition $\eta$ is the bottom partition of $\lambda^{(0)}$ and can 
well be written as $\eta = \left(\lambda^{(0)}\right)^{(0)}$, but for simplicity we will 
refer to this partition as $\eta$. Inserting in (\ref{localder}) the value of the derivatives from
Theorem \ref{bernsteinderivative}, we find 
$$
\frac{{B_{k,\lambda}^n}^{(k)}(a)}
{{B_{k-1,\lambda^{(0)}}^{n-1}}^{(k-1)}(a)} = 
\frac{n}{b-a} \frac{f_{\lambda}(n+1) f_{\eta}(n-1)}{f_{\lambda^{(0)}}(n)^2} 
\frac{S_{\lambda^{(0)}}(a^{n-k},b^{k}) S_{\lambda^{(0)}}(a^{n+1-k},b^{k-1})}
{S_{\lambda}(a^{n+1-k},b^{k}) S_{\eta}(a^{n-k},b^{k-1})} 
$$
and 
$$
\frac{{B_{k,\lambda}^n}^{(n-k)}(b)}
{{B_{k,\lambda^{(0)}}^{n-1}}^{(n-k-1)}(b)} = 
-\frac{n}{b-a} \frac{f_{\lambda}(n+1) f_{\eta}(n-1)}{f_{\lambda^{(0)}}(n)^2} 
\frac{S_{\lambda^{(0)}}(a^{n-k},b^{k}) S_{\lambda^{(0)}}(a^{n-1-k},b^{k+1})}
{S_{\lambda}(a^{n-k},b^{k+1}) S_{\eta}(a^{n-k-1},b^{k})}. 
$$
To write the formula for the derivative in a compact form, we define 
\begin{definition}
Let $\lambda = (\lambda_1,\lambda_2,...,\lambda_n)$
be a partition of length $n \geq 2$. Let $a,b$ be real numbers. For $k=0,...,n$,
we define the factor 
\begin{equation}\label{Requation}
R_{\lambda}(k,n) =  \frac{S_{\lambda^{(0)}}(a^{n-k-1},b^{k+1})
S_{\lambda^{(0)}}(a^{n-k},b^{k})}
{S_{\lambda}(a^{n-k},b^{k+1}) 
S_{\eta}(a^{n-k-1},b^{k})}, 
\end{equation}
where $\lambda^{(0)}=(\lambda_2,...,\lambda_n$ is the bottom partition of $\lambda$ and 
$\eta = (\lambda_3,...,\lambda_n$ is the bottom partition of $\lambda^{(0)}$.
\end{definition}

\noindent From the last definition and by noticing that 
$$
\frac{f_{\lambda}(n+1) f_{\eta}(n-1)}{f_{\lambda^{(0)}}(n)^2} = 
\frac{1}{\binom{\lambda}{\lambda^{(0)}}}_{n-1} 
$$ 
we obtain 
\begin{theorem}\label{case1lambda}
Let $\lambda = (\lambda_1,\lambda_2,\lambda_3,...,\lambda_n)$
be a partition of length at most $n$ such that $\lambda_1 = \lambda_2$.
Then the derivative of the Chebyshev-Bernstein basis associated
with the partition $\lambda$ over an interval $[a,b]$ satisfies 
$$
\frac{d B_{k,\lambda}^n(t)}{dt} = 
\frac{n}{(b-a) \binom{\lambda}{\lambda^{(0)}}}_{n-1}
\left(R_{\lambda}(k-1,n)  B_{k-1,\lambda^{(0)}}^{n-1}(t) - 
R_{\lambda}(k,n)  B_{k,\lambda^{(0)}}^{n-1}(t) \right),
$$
where $\lambda^{(0)}$ is the bottom partition of $\lambda$ 
and $R_{\lambda}(k,n)$ is defined in equation (\ref{Requation})
and in which $\eta$ is the partition $\eta = (\lambda_3,...,\lambda_n)$.
We adopt here the convention that 
$B_{-1,\lambda^{(0)}}^{n-1} \equiv B_{n,\lambda^{(0)}}^{n-1} \equiv 0$. 
\end{theorem}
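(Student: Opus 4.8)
The plan is to reduce the statement to the recurrence already recorded in (\ref{localder}) together with the explicit boundary-derivative values of Theorem \ref{bernsteinderivative}. The first step is to justify passing to the smaller space. By (\ref{chebyshevconvention}) the first component of the \muntz function attached to $\lambda$ is $t^{\lambda_1-\lambda_2+1}$, which equals $t$ precisely because of the hypothesis $\lambda_1=\lambda_2$; hence $t\in\mathcal{E}_\lambda(n)$ and the constant function lies in $D\mathcal{E}_\lambda(n)$. Differentiating the remaining monomial generators $t^{\lambda_1-\lambda_{j}+(j-1)}$, $j=3,\ldots,n$, and $t^{\lambda_1+n}$ and comparing exponents with those of the \muntz function attached to $\lambda^{(0)}=(\lambda_2,\ldots,\lambda_n)$ shows $D\mathcal{E}_\lambda(n)=\mathcal{E}_{\lambda^{(0)}}(n-1)$. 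Consequently $dB^n_{k,\lambda}/dt$ is an element of $\mathcal{E}_{\lambda^{(0)}}(n-1)$ and can be expanded in its Chebyshev-Bernstein basis.

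Next I would pin down which basis elements occur. By Theorem \ref{mazuretheorem} the function $B^n_{k,\lambda}$ vanishes $k$ times at $a$ and $n-k$ times at $b$, so its derivative vanishes $k-1$ times at $a$ and $n-k-1=(n-1)-k$ times at $b$. Matching these orders against the vanishing pattern of the $B^{n-1}_{j,\lambda^{(0)}}$ (which vanish $j$ times at $a$ and $(n-1)-j$ times at $b$) forces $j\in\{k-1,k\}$, with the convention $B^{n-1}_{-1,\lambda^{(0)}}\equiv B^{n-1}_{n,\lambda^{(0)}}\equiv 0$ absorbing the endpoints $k=0$ and $k=n$. Evaluating the $k$-th derivative at $a$ kills the $B^{n-1}_{k,\lambda^{(0)}}$ term, and evaluating the $(n-k)$-th derivative at $b$ kills the $B^{n-1}_{k-1,\lambda^{(0)}}$ term; this isolates the two coefficients and produces exactly relation (\ref{localder}).

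It then remains to evaluate the two coefficients, which is where the groundwork pays off. I would apply Theorem \ref{bernsteinderivative} once with the data $(\lambda,n,k)$ in the numerators and once with $(\lambda^{(0)},n-1,k-1)$ (at $a$), respectively $(\lambda^{(0)},n-1,k)$ (at $b$), in the denominators, using that the bottom partition of $\lambda^{(0)}$ is $\eta=(\lambda_3,\ldots,\lambda_n)$. The factorial and $(b-a)$ powers combine to $n/(b-a)$; the factors $a^{\lambda_1}/a^{\lambda_2}$ and $b^{\lambda_1}/b^{\lambda_2}$ collapse to $1$ — this is the one place the hypothesis $\lambda_1=\lambda_2$ is genuinely used; the hook-length factors combine to $f_\lambda(n+1)f_\eta(n-1)/f_{\lambda^{(0)}}(n)^2$, which by (\ref{factor3}) equals $\binom{\lambda}{\lambda^{(0)}}_{n-1}^{-1}$; and the surviving Schur quotients are exactly $R_\lambda(k-1,n)$ and $R_\lambda(k,n)$ as defined in (\ref{Requation}). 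The opposite signs of the two boundary-derivative values in Theorem \ref{bernsteinderivative} supply the minus sign in front of the second term, and assembling the pieces yields the asserted formula.

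The argument involves no genuinely hard step; the only point demanding care is the bookkeeping in the index shift $n\mapsto n-1$, $k\mapsto k-1$ when specializing Theorem \ref{bernsteinderivative} to $\mathcal{E}_{\lambda^{(0)}}(n-1)$ — in particular verifying that the partition $\eta$ occurring in $R_\lambda$ is indeed the bottom partition $(\lambda^{(0)})^{(0)}$, and checking that the boundary conventions correctly handle the degenerate cases $k=0$ and $k=n$ in which only one Chebyshev-Bernstein function of the smaller space appears.
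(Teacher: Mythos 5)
Your proposal is correct and follows essentially the same route as the paper: both derive the two-term relation (\ref{localder}) from the vanishing characterization of Theorem \ref{mazuretheorem}, then substitute the boundary derivatives of Theorem \ref{bernsteinderivative} and simplify the hook-length ratio to $\binom{\lambda}{\lambda^{(0)}}_{n-1}^{-1}$ and the Schur quotients to $R_\lambda(k-1,n)$ and $R_\lambda(k,n)$. The only difference is that you spell out why $D\mathcal{E}_\lambda(n)=\mathcal{E}_{\lambda^{(0)}}(n-1)$ under $\lambda_1=\lambda_2$, which the paper merely asserts.
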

Note that in the case the partition $\lambda$ is the empty partition, we recover 
the classical formulas for the derivative of the polynomial Bernstein 
basis. Let $\lambda = (\lambda_1,\lambda_2,...,\lambda_n)$ be a partition 
of length at most $n$ such that $\lambda_1 = \lambda_2$ and 
Consider a $\mathcal{E}_{\lambda}(n)$-function $P$ written 
in the Chebyshev-Bernstein basis over an interval $[a,b]$ as 
$$
P(t) = \sum_{k=0}^{n} B_{k,\lambda}^{n}(t) P_{k}.
$$
Using Theorem \ref{bernsteinderivative}, we can express 
the derivative of the function $P$ in term of the Chebyshev-Bernstein 
basis over the interval $[a,b]$ of the space $\mathcal{E}_{\lambda^{(0)}}(n-1)$.
Doing so leads to the following

\begin{theorem}
Let $\lambda = (\lambda_1,...,\lambda_n)$ be a partition 
such that $\lambda_1 = \lambda_2$ and let $P$ be a 
$\mathcal{E}_{\lambda}(n)$-function, 
written in the Chebyshev-Bernstein basis as 
$$
P(t) = \sum_{k=0}^{n} B_{k,\lambda}^{n}(t) P_{k}.
$$
Then, we have 
$$
P'(t) = \frac{n}{(b-a)\binom{\lambda}{\lambda^{(0)}}}_{n-1} 
\sum_{k=0}^{n-1} R_{\lambda}(k,n) 
B_{k,\lambda^{(0)}}^{n-1}(t) \Delta P_{k},
$$  
where $\Delta P_{i} = P_{i+1}-P_{i}$ and  $R_{\lambda}(k,n)$ is given in 
(\ref{Requation}).
\end{theorem}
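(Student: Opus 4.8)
The plan is to differentiate the Chebyshev--B\'ezier representation $P(t)=\sum_{k=0}^{n}B_{k,\lambda}^{n}(t)P_{k}$ term by term and feed each derivative $\frac{d}{dt}B_{k,\lambda}^{n}(t)$ into Theorem~\ref{case1lambda}, whose hypothesis $\lambda_1=\lambda_2$ is exactly the one we are assuming. This immediately yields
\[
P'(t)=\frac{n}{(b-a)\binom{\lambda}{\lambda^{(0)}}_{n-1}}\sum_{k=0}^{n}P_{k}\Bigl(R_{\lambda}(k-1,n)B_{k-1,\lambda^{(0)}}^{n-1}(t)-R_{\lambda}(k,n)B_{k,\lambda^{(0)}}^{n-1}(t)\Bigr),
\]
with the convention $B_{-1,\lambda^{(0)}}^{n-1}\equiv B_{n,\lambda^{(0)}}^{n-1}\equiv 0$ inherited from that theorem.

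The remaining work is a summation-by-parts (Abel) rearrangement. First I would shift the index in the first sum by setting $j=k-1$, turning $\sum_{k=0}^{n}P_{k}R_{\lambda}(k-1,n)B_{k-1,\lambda^{(0)}}^{n-1}(t)$ into $\sum_{j=0}^{n-1}P_{j+1}R_{\lambda}(j,n)B_{j,\lambda^{(0)}}^{n-1}(t)$, where the term $j=-1$ drops out because $B_{-1,\lambda^{(0)}}^{n-1}\equiv 0$. In the second sum the term $k=n$ drops out because $B_{n,\lambda^{(0)}}^{n-1}\equiv 0$, leaving $\sum_{k=0}^{n-1}P_{k}R_{\lambda}(k,n)B_{k,\lambda^{(0)}}^{n-1}(t)$. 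Subtracting, the common factor $R_{\lambda}(k,n)B_{k,\lambda^{(0)}}^{n-1}(t)$ multiplies $P_{k+1}-P_{k}=\Delta P_{k}$, which is precisely the claimed formula.

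The only subtlety to check carefully is that the boundary terms indeed vanish, i.e.\ that the convention $B_{-1,\lambda^{(0)}}^{n-1}\equiv B_{n,\lambda^{(0)}}^{n-1}\equiv 0$ used in Theorem~\ref{case1lambda} is consistent with the reindexing; this is routine index bookkeeping. No genuine obstacle is expected here, since the combinatorial heart of the statement---the Schur-function expression for $\frac{d}{dt}B_{k,\lambda}^{n}$, and in particular the appearance of the factors $R_{\lambda}(k,n)$ and $\binom{\lambda}{\lambda^{(0)}}_{n-1}$---was already established in Theorem~\ref{case1lambda}. One can view the whole argument as the observation that differentiation is linear and that the telescoping structure of the two terms in Theorem~\ref{case1lambda} is designed precisely so that the discrete difference operator $\Delta$ appears on the control points, exactly as in the classical polynomial case $\lambda=\emptyset$.
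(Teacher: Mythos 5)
Your proposal is correct and follows exactly the route the paper takes: the paper derives this theorem by substituting the basis-derivative formula of Theorem~\ref{case1lambda} into $P(t)=\sum_k B_{k,\lambda}^{n}(t)P_k$ and regrouping, which is precisely your term-by-term differentiation followed by the index shift and telescoping. Your write-up merely makes explicit the routine summation-by-parts step that the paper leaves to the reader.
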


\noindent In particular, we have
\begin{equation}\label{c1continuity}
\begin{split}
P'(a)  & = \frac{n a^{\lambda_1}}{b-a} \frac{f_{\lambda}(n+1)}{f_{\lambda^{(0)}}(n)}
\frac{S_{\lambda^{(0)}}(a^{n-1},b)}{S_{\lambda}(a^{n},b)} \left( P_1 - P_0 \right), \\
P'(b)  & = \frac{n b^{\lambda_1}}{b-a} \frac{f_{\lambda}(n+1)}{f_{\lambda^{(0)}}(n)}
\frac{S_{\lambda^{(0)}}(a,b^{n-1})}{S_{\lambda}(a,b^{n})} \left( P_n - P_{n-1} \right).
\end{split}
\end{equation}
As from Theorem \ref{bernsteinderivative}, we know the derivatives $(B_{1,\lambda}^{n})'(a)$ and 
$(B_{n-1,\lambda}^{n})'(b)$ and by using the fact that the segments 
$[P_0,P_1]$ and $[P_{n-1},P_{n}]$ are tangents to the curve at the 
point $P(a)$ and $P(b)$ respectively, we can show that the equations (\ref{c1continuity}) are 
true independently if the partition $\lambda$ has it first two parts equals or not.
It is rather interesting that computing the derivative $(B_{0,\lambda}^{n})'(a)$ 
or $(B_{n,\lambda}^{n})'(a)$ using the explicit expression of the Chebyshev-Bernstein 
basis (\ref{bernstein}) reveal to be difficult.    
Equations (\ref{c1continuity}) can be used to achieve the $C^{1}$ continuity between 
two Chebyshev-B\'ezier curves associated with two different partitions, as follows 

\begin{corollary}\label{c1corollary}
Let $\lambda=(\lambda_1,\lambda_2,...,\lambda_n$ and 
$\mu = (\mu_1,\mu_2,...,\mu_n)$ be two partitions of length
at most $n$ and let $\mathcal{P} = (P_0,P_1,...,P_n)$ 
(resp. $(\mathcal{Q} = Q_0,Q_1,...,Q_n)$) be the Chebyshev-B\'ezier 
points of a $\mathcal{E}_{\lambda}(n)$ (resp. a $\mathcal{E}_{\mu}(n)$)
function over an interval $[a,b]$ (resp. $[b,c]$). 
Then the composite curve $\gamma$ formed by the two curves associated with the two control polygons 
$\mathcal{P}$ and $\mathcal{Q}$ is $C^1$ at $b$ if and only if $P_{n} = Q_{0}$ and 
\small \begin{equation*}
\frac{n b^{\lambda_1}}{b-a} \frac{f_{\lambda}(n+1)}{f_{\lambda^{(0)}}(n)}
\frac{S_{\lambda^{(0)}}(a,b^{n-1})}{S_{\lambda}(a,b^{n})} \left( P_n - P_{n-1} \right) 
= \frac{n b^{\mu_1}}{c-b} \frac{f_{\mu}(n+1)}{f_{\mu^{(0)}}(n)}
\frac{S_{\mu^{(0)}}(c,b^{n-1})}{S_{\mu}(c,b^{n})} \left( Q_1 - Q_0 \right).
\end{equation*}
\end{corollary}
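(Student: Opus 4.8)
The plan is to split the $C^{1}$ requirement at $b$ into the two classical conditions --- matching of positions ($C^{0}$) and matching of the one-sided first derivatives --- and then to read both off from the endpoint derivative formulas (\ref{c1continuity}).

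First I would settle $C^{0}$ continuity. Writing $P(t)=\sum_{k=0}^{n}B_{k,\lambda}^{n}(t)P_{k}$ and using Theorem \ref{mazuretheorem}, every $B_{k,\lambda}^{n}$ with $k<n$ vanishes at $b$, so $P(b)=B_{n,\lambda}^{n}(b)P_{n}=P_{n}$, the last equality because $\sum_{k}B_{k,\lambda}^{n}\equiv 1$ forces $B_{n,\lambda}^{n}(b)=1$. The same argument applied to $Q(t)=\sum_{k=0}^{n}B_{k,\mu}^{n}(t)Q_{k}$ on $[b,c]$, where now $b$ is the left endpoint and $B_{k,\mu}^{n}$ vanishes $k$ times there, gives $Q(b)=Q_{0}$. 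Hence the composite curve $\gamma$ is continuous at $b$ if and only if $P_{n}=Q_{0}$.

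Assuming $C^{0}$ continuity, $\gamma$ is $C^{1}$ at $b$ exactly when the left derivative $P'(b^{-})$ equals the right derivative $Q'(b^{+})$; since each piece is smooth up to the shared endpoint, this common value is then automatically the value of the continuous derivative of $\gamma$ at $b$. For $P'(b^{-})$ I would quote the second line of (\ref{c1continuity}) verbatim. For $Q'(b^{+})$ I would apply the first line of (\ref{c1continuity}) to the $\mathcal{E}_{\mu}(n)$-function $Q$ over $[b,c]$, i.e. with the substitution $a\mapsto b$, $b\mapsto c$, $\lambda\mapsto\mu$, $(P_{k})\mapsto(Q_{k})$, and then use the symmetry of Schur functions to rewrite $S_{\mu^{(0)}}(b^{n-1},c)=S_{\mu^{(0)}}(c,b^{n-1})$ and $S_{\mu}(b^{n},c)=S_{\mu}(c,b^{n})$. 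This gives
\[
P'(b^{-})=\frac{n\,b^{\lambda_1}}{b-a}\frac{f_{\lambda}(n+1)}{f_{\lambda^{(0)}}(n)}\frac{S_{\lambda^{(0)}}(a,b^{n-1})}{S_{\lambda}(a,b^{n})}(P_{n}-P_{n-1}),\qquad
Q'(b^{+})=\frac{n\,b^{\mu_1}}{c-b}\frac{f_{\mu}(n+1)}{f_{\mu^{(0)}}(n)}\frac{S_{\mu^{(0)}}(c,b^{n-1})}{S_{\mu}(c,b^{n})}(Q_{1}-Q_{0}).
\]
Equating $P'(b^{-})$ with $Q'(b^{+})$ is precisely the displayed identity in the corollary, so $\gamma$ is $C^{1}$ at $b$ if and only if $P_{n}=Q_{0}$ together with that identity; both implications are then immediate.

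The step that needs real care is the validity of (\ref{c1continuity}) for partitions with $\lambda_{1}>\lambda_{2}$ (the corollary makes no assumption here), which is the content of the remark preceding the corollary; I would record it explicitly. Differentiating $P=\sum_{k}B_{k,\lambda}^{n}P_{k}$ and evaluating at $a$, Theorem \ref{mazuretheorem} kills $(B_{k,\lambda}^{n})'(a)$ for all $k\ge 2$, while differentiating $\sum_{k}B_{k,\lambda}^{n}\equiv 1$ gives $(B_{0,\lambda}^{n})'(a)=-(B_{1,\lambda}^{n})'(a)$; hence $P'(a)=(B_{1,\lambda}^{n})'(a)(P_{1}-P_{0})$, with $(B_{1,\lambda}^{n})'(a)$ supplied by Theorem \ref{bernsteinderivative} at $k=1$, and the symmetric computation at $b$ uses $k=n-1$. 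Everything else is bookkeeping with Schur-function symmetry, so I expect no further obstacle.
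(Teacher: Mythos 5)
Your proposal is correct and follows essentially the same route as the paper: the corollary is read off directly from the endpoint derivative formulas (\ref{c1continuity}), with $C^{0}$ continuity reduced to $P_{n}=Q_{0}$ via the vanishing orders in Theorem \ref{mazuretheorem}. Your explicit justification that (\ref{c1continuity}) holds without the hypothesis $\lambda_1=\lambda_2$ --- killing $(B_{k,\lambda}^{n})'(a)$ for $k\ge 2$ by the vanishing orders, using the partition of unity to get $(B_{0,\lambda}^{n})'(a)=-(B_{1,\lambda}^{n})'(a)$, and quoting Theorem \ref{bernsteinderivative} at $k=1$ and $k=n-1$ --- is precisely the argument the paper sketches via the tangency of the end-segments, just written out more carefully.
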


\begin{example}
Consider the Chebyshev-B\'ezier curve $\Gamma_1$ of order $n$ associated with 
the partition $\lambda =(1^k)$ with $k \leq n$ and control polygon 
$(P_{0},P_{1},....,P_{n})$ over an interval $[a,b]$. Consider another 
Chebyshev-B\'ezier curve $\Gamma_2$ of order $n$ associated 
with the empty partition and control polygon $(Q_{0},Q_{1},....Q_n)$ 
over an interval $[b,c]$. From equations (\ref{c1continuity}),
a necessary and sufficient condition for 
the two curves $\Gamma_1$ and $\Gamma_2$ to be $C^1$ at the point $P_n$ is that 
$$
P_{n} = Q_{0} 
\quad \textnormal{and} \quad
\frac{n(n+1)b}{k(b-a)} \frac{e_{k-1}(a,b^{n-1})}{e_k(a,b^n)} (P_{n}-P_{n-1}) = 
\frac{n}{c-b} (Q_{1}-Q_{0}).
$$
If we denote by $\rho$ the positive number such that 
$P_{n}-P_{n-1} =  \rho (Q_{1}-Q_{0})$, then, from the last equation, 
in order to achieve the $C^1$ continuity at $P_n$,
we should choose the number $c$ as 
\begin{equation}\label{cparameter}
c = b + \frac{k(b-a) e_{k}(a,b^n)}{(n+1) b \rho e_{k-1}(a,b^{n-1})}.
\end{equation}
Figure \ref{fig:figure6} shows the case $n = 3$ in this example, while Figure \ref{fig:figure7} shows another example
of the application of Corollary \ref{c1corollary} with the partitions $\lambda = (2,1)$ and $\mu = (1,1)$.   
\end{example}

\begin{figure}
\hskip 3 cm
\includegraphics[height=5.7cm]{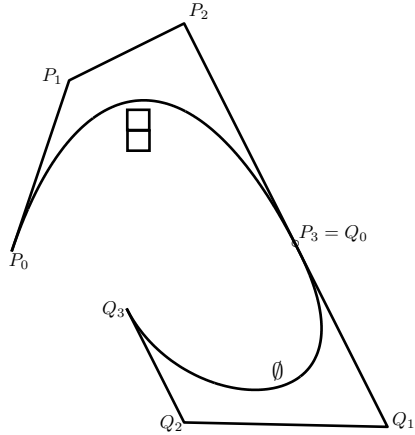}
\caption{$C^1$ continuity at the point $P_3$ between two Chebyshev-B\'ezier curves
associated with two differents partitions. The Chebyshev-B\'ezier curve with Chebyshev-B\'ezier 
points $(P_0,P_1,P_2,P_3)$ is associated to the partition $(1,1)$ and parametrized over the interval 
$[a,b] = [1,3]$. The Chebyshev-B\'ezier curve with Chebyshev-B\'ezier points $(Q_0,Q_1,Q_2,Q_3)$ is associated 
to the empty partition and parametrized over the interval $[3,c]$, where the parameter $c$ was computed 
using equation (\ref{cparameter}) to achieve the $C^1$ continuity. (see example 13)} 
\label{fig:figure6}
\end{figure}

\begin{remark}
If a partition $\lambda = (\lambda_1,\lambda_2,...,\lambda_n)$ 
of length at most $n$ satisfies $\lambda_1 = \lambda_2 = ...=\lambda_s,$
$s \leq n$, then we can iterate Theorem \ref{case1lambda} to compute the 
derivatives up to order $s-1$ of the Chebyshev-Bernstein basis.  
\end{remark}

\begin{figure}
\hskip 3 cm
\includegraphics[height=6.cm]{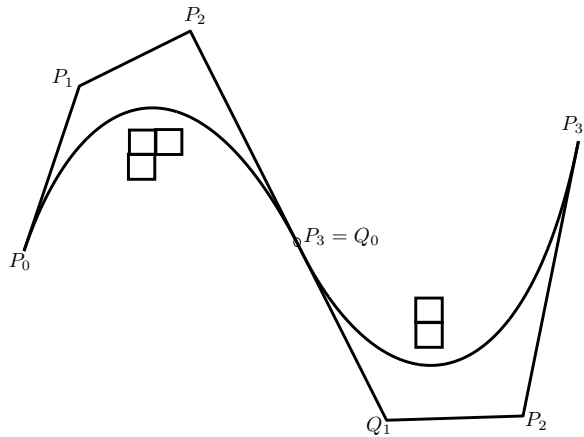}
\caption{$C^1$ continuity at the point $P_3$ between two Chebyshev-B\'ezier curves
associated with two differents partitions. The Chebyshev-B\'ezier curve with Chebyshev-B\'ezier 
points $(P_0,P_1,P_2,P_3)$ is associated to the partition $(2,1)$ and parametrized over the interval 
$[a,b] = [1,3]$. The Chebyshev-B\'ezier curve with Chebyshev-B\'ezier points $(Q_0,Q_1,Q_2,Q_3)$ is associated 
to the partition $(1,1)$ and parametrized over the interval $[3,c]$, where the parameter $c$ was computed 
using the conditions of corollary \ref{c1corollary} to achieve the $C^1$ continuity.} 
\label{fig:figure7}
\end{figure}
Consider, now, a partition $\lambda= (\lambda_1,\lambda_2,...,\lambda_n)$
of length at most $n$ in which we assume this time that 
$\lambda_1 \neq \lambda_2$. Under this condition,
the derivative of the Chebyshev-Bernstein element $B^{n}_{k,\lambda}$ over an 
interval $[a,b]$ is an element of the Chebyshev space $\mathcal{E}_{\mu}(n)$ where $\mu$ 
is the partition $\mu = (\lambda_1-1,\lambda_2,...,\lambda_n)$.  
The derivative of $B^{n}_{k,\lambda}$ can be written as linear combination
of the Chebyshev-Bernstein basis of $\mathcal{E}_{\mu}(n)$. However, if we use 
the vanishing properties of the Chebyshev-Bernstein bases,
we arrive to a three-term recurrence relation between the two Chebyshev-Bernstein bases and 
in which Theorem \ref{bernsteinderivative} does not allow for an 
easy way to compute the necessary coefficients. To solve this problem,
we can instead proceed as follows : As $\lambda_1 \neq \lambda_2$, 
we have necessarily $\lambda_1 > \lambda_2$. Therefore, the partition 
$\eta = (\lambda_1-1,\lambda_1-1,\lambda_2,...,\lambda_n)$ is a dimension elevation 
partition of $\lambda$. We can compute the Chebyshev-Bernstein basis of 
$\mathcal{E}_{\eta}(n+1)$ as a function of the Chebyshev-Bernstein basis of 
$\mathcal{E}_{\lambda}(n)$ according to Theorem \ref{elevationtheorem}.
Now the partition $\eta$ satisfy the condition that its first 
two parts are equals, and therefore, we can use Theorem \ref{case1lambda}
to compute the derivative. Proceeding along these two steps, in which 
we omit the computation as they can be readily done, we find  

\begin{theorem}\label{case2lambda}
Let $\lambda = (\lambda_1,\lambda_2,\lambda_3,...,\lambda_n)$
be a partition of length at most $n$ such that $\lambda_1 \neq \lambda_2$.
Then the derivative of the Chebyshev-Bernstein basis associated
with the partition $\lambda$ over an interval $[a,b]$ satisfies 
{\small \begin{equation*}
\frac{d B_{k,\lambda}^n(t)}{dt} = 
\frac{ \binom{\eta}{\lambda}_{n}}{(b-a) \binom{\eta}{\mu}_{n}}
\left( G_1(k,n) B_{k-1,\mu}^{n}(t)  + G_2(k,n) B_{k,\mu}^{n}(t)
+ G_3(k,n) B_{k+1,\mu}^{n}(t) \right),  
\end{equation*}}
where 
$$
G_1(k,n) = (n+1-k) a \Gamma_{\lambda}^{\eta}(n,k) R_{\eta}(k-1,n+1),
$$
$$
G_2(k,n) =R_{\eta}(k,n+1) \big( (k+1) b \Delta_{\lambda}^{\eta}(n,k) -
(n+1-k) a \Gamma_{\lambda}^{\eta}(n,k) \big),
$$
and 
$$
G_3(k,n) = -(k+1) b \Delta_{\lambda}^{\eta}(n,k) R_{\eta}(k+1,n+1).
$$
and the partition $\eta$ and $\mu$ are given by 
$\eta = (\lambda_1-1,\lambda_1-1,\lambda_2,...,\lambda_n)$ and 
$\mu = (\lambda_1-1,\lambda_2,...,\lambda_n)$, the factors 
$\Delta_{\lambda}^{\eta}$, $\Gamma_{\lambda}^{\eta}(n,k)$ and
$R_{\eta}$ are defined in (\ref{factor1}), (\ref{factor2}) and (\ref{Requation})
respectively. 
We adopt the convention that $B_{-1,\mu}^{n} \equiv B_{n+1,\mu}^{n} \equiv 0$.
\end{theorem}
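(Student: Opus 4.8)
The plan is to derive this identity by chaining two results already at our disposal: the dimension elevation formula of Theorem~\ref{elevationtheorem} and the differentiation rule of Theorem~\ref{case1lambda}. The bridge is the auxiliary partition $\eta=(\lambda_1-1,\lambda_1-1,\lambda_2,\ldots,\lambda_n)$. Since the hypothesis $\lambda_1\neq\lambda_2$ forces $\lambda_1>\lambda_2$, we have $\lambda_1-1\geq\lambda_2$, so $\eta$ is a genuine partition; by Proposition~\ref{elevationproposition}, taken in the form~(\ref{elevation2}) with $s=1$ and $\rho=\lambda_1-1$, it is a dimension elevation partition of $\lambda$. Its bottom partition is exactly $\eta^{(0)}=(\lambda_1-1,\lambda_2,\ldots,\lambda_n)=\mu$, the partition indexing the space in which the derivative $dB^n_{k,\lambda}/dt$ is expanded. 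This identity $\eta^{(0)}=\mu$, together with the fact that $\eta$ has its first two parts equal by construction, is what makes the two-step passage close up.

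First I would apply Theorem~\ref{elevationtheorem} to the pair $(\lambda,\eta)$. As $\lambda_1-\eta_1=1$, the powers $a^{\lambda_1-\eta_1}$ and $b^{\lambda_1-\eta_1}$ collapse to $a$ and $b$, giving
\[
B^n_{k,\lambda}(t)=\frac{(n+1-k)a}{n+1}\binom{\eta}{\lambda}_n\Gamma^{\eta}_{\lambda}(n,k)\,B^{n+1}_{k,\eta}(t)+\frac{(k+1)b}{n+1}\binom{\eta}{\lambda}_n\Delta^{\eta}_{\lambda}(n,k)\,B^{n+1}_{k+1,\eta}(t).
\]
All $t$-dependence now resides in $B^{n+1}_{k,\eta}$ and $B^{n+1}_{k+1,\eta}$, so differentiating term by term reduces the task to computing $dB^{n+1}_{j,\eta}/dt$ for $j\in\{k,k+1\}$.

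The second step is to apply Theorem~\ref{case1lambda} to $\eta$, which is legitimate because $\eta_1=\eta_2$. Using $\eta^{(0)}=\mu$, this gives, for $j=k$ and $j=k+1$,
\[
\frac{dB^{n+1}_{j,\eta}(t)}{dt}=\frac{n+1}{(b-a)\binom{\eta}{\mu}_n}\bigl(R_{\eta}(j-1,n+1)\,B^{n}_{j-1,\mu}(t)-R_{\eta}(j,n+1)\,B^{n}_{j,\mu}(t)\bigr).
\]
Substituting these into the differentiated elevation identity, the factors $(n+1)$ cancel, and collecting the coefficients of $B^n_{k-1,\mu}$, $B^n_{k,\mu}$ and $B^n_{k+1,\mu}$ yields, after pulling out the common constant $\binom{\eta}{\lambda}_n/\bigl((b-a)\binom{\eta}{\mu}_n\bigr)$, exactly $G_1(k,n)$, $G_2(k,n)$ and $G_3(k,n)$. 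The middle coefficient $G_2(k,n)$ appears as a difference precisely because $B^n_{k,\mu}$ receives the $j=k$ contribution (with a minus sign) from the first term and the $j=k+1$ contribution from the second; the conventions $B^n_{-1,\mu}\equiv B^n_{n+1,\mu}\equiv 0$ are those inherited from Theorems~\ref{elevationtheorem} and~\ref{case1lambda}, and they take care of the endpoint indices $k=0$ and $k=n$.

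I do not anticipate a genuine obstacle: once the two building blocks and the identification $\eta^{(0)}=\mu$ are in place, the rest is bookkeeping. The two points demanding care are (i) verifying that $\eta$ really is an admissible dimension elevation partition of $\lambda$ and that its bottom partition is $\mu$, since the entire argument hinges on this; and (ii) tracking the order at which each auxiliary factor is evaluated, because Theorem~\ref{case1lambda} is used here at order $n+1$, so $R_\eta(\cdot,n+1)$ and $\binom{\eta}{\mu}_n$ appear, whereas $\Gamma^{\eta}_{\lambda}$, $\Delta^{\eta}_{\lambda}$ and $\binom{\eta}{\lambda}$ are taken at order $n$. A final sanity check is to confirm that the degenerate and boundary situations — $\lambda_1=1$, where $\eta=\mu=\emptyset$ and the formula should reduce to the classical Bernstein derivative, and $k\in\{0,n\}$ — are consistent with the stated vanishing conventions.
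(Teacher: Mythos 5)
Your proposal is correct and follows exactly the paper's own route: elevate from $\lambda$ to $\eta=(\lambda_1-1,\lambda_1-1,\lambda_2,\ldots,\lambda_n)$ via Theorem~\ref{elevationtheorem}, differentiate, and apply Theorem~\ref{case1lambda} to $\eta$ (whose first two parts are equal, with bottom partition $\mu$). The paper omits the final bookkeeping, which you carry out correctly, including the cancellation of the $(n+1)$ factors and the collection of the three coefficients $G_1$, $G_2$, $G_3$.
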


Let $\lambda = (\lambda_1,\lambda_2,...,\lambda_n)$ be a partition 
of length at most $n$ such that $\lambda_1 \neq \lambda_2$ and 
consider a $\mathcal{E}_{\lambda}(n)$-function $P$ written 
in the Chebyshev-Bernstein basis over an interval $[a,b]$ as 
$$
P(t) = \sum_{k=0}^{n} B_{k,\lambda}^{n}(t)  P_{k}.
$$
Using Theorem \ref{case2lambda}, we can express 
the derivative of the function $P$ in term of the Chebyshev-Bernstein 
basis over the interval $[a,b]$ of the space $\mathcal{E}_{\mu}(n)$.
Doing so leads to the following

\begin{theorem}
Let $\lambda = (\lambda_1,...,\lambda_n)$ be a partition 
such that $\lambda_1 \neq \lambda_2$ and let $P$ be a 
$\mathcal{E}_{\lambda}(n)$-function, 
written in the Chebyshev-Bernstein basis over an interval $[a,b]$ as 
$$
P(t) = \sum_{k=0}^{n} B_{k,\lambda}^{n}(t) P_{k}.
$$
Then, we have 
{\small {\begin{equation*}
P'(t) = \frac{ \binom{\eta}{\lambda}_{n}}{(b-a) \binom{\eta}{\mu}_{n}} 
\sum_{k=0}^{n} \left( G_{3}(k-1,n) P_{k-1} + G_{2}(k,n) P_{k} + G_{1}(k+1,n) P_{k+1} \right),  
B_{k,\mu}^{n}(t)
\end{equation*}}} 
where the factors $G_{1},G_{2}, G_{3}$ and the partitions $\eta$ and $\mu$ are defined in Theorem \ref{case2lambda}.  
\end{theorem}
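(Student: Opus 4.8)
The plan is to obtain the formula by differentiating the Chebyshev-B\'ezier representation of $P$ term by term and then inserting the three-term derivative recurrence for the basis functions furnished by Theorem~\ref{case2lambda}. Starting from $P(t)=\sum_{k=0}^{n}B_{k,\lambda}^{n}(t)\,P_{k}$ and differentiating,
\begin{equation*}
P'(t)=\sum_{k=0}^{n}\frac{d B_{k,\lambda}^{n}(t)}{dt}\,P_{k},
\end{equation*}
and Theorem~\ref{case2lambda} gives, with common prefactor $c:=\binom{\eta}{\lambda}_{n}/\bigl((b-a)\binom{\eta}{\mu}_{n}\bigr)$,
\begin{equation*}
\frac{d B_{k,\lambda}^{n}(t)}{dt}=c\bigl(G_1(k,n)B_{k-1,\mu}^{n}(t)+G_2(k,n)B_{k,\mu}^{n}(t)+G_3(k,n)B_{k+1,\mu}^{n}(t)\bigr),
\end{equation*}
where $\eta=(\lambda_1-1,\lambda_1-1,\lambda_2,\dots,\lambda_n)$, $\mu=(\lambda_1-1,\lambda_2,\dots,\lambda_n)$, and $B_{-1,\mu}^{n}\equiv B_{n+1,\mu}^{n}\equiv 0$ by convention. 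Substituting and pulling $c$ out front expresses $P'$ as a finite linear combination of the $\mathcal{E}_{\mu}(n)$-basis $\{B_{k,\mu}^{n}\}_{k=0}^{n}$.

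It then remains to group terms by basis element. For a fixed $j\in\{0,\dots,n\}$, the function $B_{j,\mu}^{n}(t)$ collects the contribution $G_1(j+1,n)P_{j+1}$ from the $k=j+1$ summand (through its $B_{k-1,\mu}^{n}$ slot), the contribution $G_2(j,n)P_{j}$ from the $k=j$ summand, and the contribution $G_3(j-1,n)P_{j-1}$ from the $k=j-1$ summand. Adding these three and renaming $j\mapsto k$ yields precisely
\begin{equation*}
P'(t)=\frac{\binom{\eta}{\lambda}_{n}}{(b-a)\binom{\eta}{\mu}_{n}}\sum_{k=0}^{n}\bigl(G_3(k-1,n)P_{k-1}+G_2(k,n)P_{k}+G_1(k+1,n)P_{k+1}\bigr)B_{k,\mu}^{n}(t),
\end{equation*}
which is the asserted identity, with $G_1,G_2,G_3$ the factors of Theorem~\ref{case2lambda}.

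Since the argument is nothing more than a reindexing of a finite sum, there is no genuine obstacle; the only point requiring attention is the behaviour at the two boundary indices $k=0$ and $k=n$, where one must read the out-of-range terms $G_3(-1,n)P_{-1}$ and $G_1(n+1,n)P_{n+1}$ as zero, consistently with the conventions $B_{-1,\mu}^{n}\equiv B_{n+1,\mu}^{n}\equiv 0$ already in force in Theorem~\ref{case2lambda}; with this reading the displayed formula holds uniformly for all $k=0,\dots,n$. As a sanity check one may take $P\equiv 1$, which is a $\mathcal{E}_{\lambda}(n)$-function with $P_k=1$ for every $k$ and $P'\equiv 0$; linear independence of the $B_{k,\mu}^{n}$ then forces $G_3(k-1,n)+G_2(k,n)+G_1(k+1,n)=0$ for all $k$, in agreement with the relation $\sum_{k}d B_{k,\lambda}^{n}/dt=0$ stemming from $\sum_{k}B_{k,\lambda}^{n}\equiv 1$.
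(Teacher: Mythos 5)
Your proposal is correct and follows exactly the route the paper takes: differentiate the expansion term by term, substitute the three-term recurrence of Theorem \ref{case2lambda}, and regroup the finite sum by the basis elements $B_{k,\mu}^{n}$, with the boundary terms handled by the conventions $B_{-1,\mu}^{n}\equiv B_{n+1,\mu}^{n}\equiv 0$. The reindexing is carried out correctly, and the sanity check via $P\equiv 1$ is a nice addition not present in the paper.
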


\section{Conclusions}
In this paper, we carried out a comprehensive study of the notion of Chebyshev blossom in \muntz 
spaces, thereby showing their adequacy in free-form design schemes. An interesting 
aspect of the work was the followed methodology in providing for an explicit expression of
the Chebyshev-Bernstein basis. Most of the steps in the proof were combinatorial 
in nature. Similar arguments, therefore, could be applied to any extended Chebyshev
space constructed from weight functions, in the sense that the condensation formula
will provide us with the pseudo-affinity factor and in which the combinatorics of the de 
Casteljau paths can be employed to give extra-information on the derivatives of the 
Chebyshev-Bernstein basis. Such a program will be the object of a forthcoming
contribution. Moreover, the problem of higher order continuity and the issue of 
shaping with Young diagrams lead to interesting problems for future work.  
 
\vskip 0.2 cm         

\subsubsection*{Acknowledgement : This work was partially supported by the MEXT Global COE project.} 

\vskip 0.2 cm

\label{references}

\end{document}